\tikzset{->-/.style={decoration={
  markings,
  mark=at position #1 with {\arrow{>}}},postaction={decorate}}}
\theoremstyle{plain}
\newtheorem{theorem}{Theorem}[section]
\newtheorem{lemma}[theorem]{Lemma}
\newtheorem{proposition}[theorem]{Proposition}
\newtheorem{corollary}[theorem]{Corollary}
\theoremstyle{remark}
\newtheorem{example}[theorem]{Example}
\newtheorem{remark}[theorem]{Remark}
\theoremstyle{definition}
\newtheorem{definition}[theorem]{Definition}
\newcommand{\floor}[1]{\left\lfloor #1 \right\rfloor}
\newcommand{\BC}{\mathbf{C}}
\newcommand{\BP}{\mathbf{P}}
\newcommand{\BR}{\mathbf{R}}
\newcommand{\CG}{\mathcal{G}}
\newcommand{\CO}{\mathcal{O}}
\newcommand{\CP}{\mathcal{P}}
\newcommand{\CX}{\mathcal{X}}
\newcommand{\norm}[1]{\left| #1 \right|}
\newcommand{\normsq}[1]{\left| #1 \right|^2}
\newcommand{\suchthat}{ \ | \ }
\newcommand{\Hom}{\mathrm{Hom}}
\newcommand{\Rep}{\mathrm{Rep}}
\newcommand{\gl}{\mathfrak{gl}}
\newcommand{\fg}{\mathfrak{g}}
\newcommand{\fgd}{\mathfrak{g}^\ast}
\newcommand{\Tr}{\mathrm{Tr}}
\newcommand{\rk}{\mathrm{rk}}
\newcommand{\red}{/\!/}
\newcommand{\reda}[1]{ \underset{#1}{/\!/} }
\newcommand{\rred}{/\!/\!/}
\newcommand{\rreda}[1]{ \underset{#1}{/\!/\!/} }
\newcommand{\diag}{\mathrm{diag}}
\newcommand{\bd}{\mathbf{d}}
\newcommand{\mur}{\mu_r}
\newcommand{\muc}{\mu_c}
\newcommand{\into}{\hookrightarrow}
\newcommand{\iso}{\cong}
\newcommand{\ev}[1]{\left\langle{#1}\right\rangle}
\newcommand{\Endo}{\mbox{End}}
\providecommand{\set}[1]{\left\{ #1\right\}}
\newcommand{\Omin}{\mathcal{O}_{\mathrm{min}}}
\newcommand{\Oreg}{\mathcal{O}_{\mathrm{reg}}}
\newcommand{\Omincl}{\overline{\mathcal O}_{\mathrm{min}}}
\newcommand{\Oregcl}{\overline{\mathcal O}_{\mathrm{reg}}}
\begin{document}
\title[Hyperpolygons and Hitchin Systems]{Hyperpolygons and Hitchin Systems}

\author{Jonathan Fisher}
\address{Fachbereich Mathematik, Universit\"at Hamburg, Germany}
\email{jonathan.fisher@uni-hamburg.de}

\author{Steven Rayan}
\address{Department of Mathematics, University of Toronto, Canada}
\email{stever@math.toronto.edu}

\begin{abstract} 
We study the hyperk\"ahler analogues of moduli spaces of semistable $n$-gons in complex projective space.  We prove that the hyperk\"ahler Kirwan map is surjective and produce a formula that recursively calculates the Betti numbers of these spaces for all ranks.  Building on a natural analogy between hyperpolygons and parabolic Higgs bundles, we identify hyperpolygon spaces with certain degenerate Hitchin systems, and use this to establish their complete integrability, for ranks up to and including $3$.
\end{abstract}

\subjclass[2010]{14L30, 53D20 (primary) and 14D06, 14D20 (secondary)}

\keywords{Hyperpolygon, Hitchin system, hyperk\"ahler quotient, quiver variety, Kirwan map, Morse theory, parabolic Higgs bundle, singular spectral curve, complete integrability}

\maketitle

\tableofcontents



%
%
%
%

\section{Introduction}

In this paper, we study a very basic problem in geometric invariant theory: the moduli of polygons in complex projective space.  We denote by $\CP_n^r(\alpha)$ the moduli space of $n$-gons in $\BP^{r-1}$, where $\alpha \in \BR^n$ is a K\"ahler modulus. The case $r=2$ was first studied by Klyachko \cite{Klyachko}, who gave a procedure to compute their Betti numbers by studying certain Hamiltonian flows.

For all $r$, the spaces $\CP^r_n(\alpha)$ can be constructed as compact symplectic quotients, and hence their Betti numbers and cohomology rings may be computed using the standard techniques \cite{JeffreyKirwan95,TolmanWeitsman}.

We focus on \emph{hyperpolygon} spaces $\CX^r_n(\alpha)$, which are the hyperk\"ahler analogues of polygon spaces.  For hyperpolygons, the $r=2$ case was studied by Konno \cite{KonnoPolygon}, who computed their Betti numbers and cohomology rings. Our first result is the following, in which $H^\ast$ denotes cohomology with rational coefficients, as will be the convention throughout.

\begin{theorem}
Let $G = S(U(r) \times (S^1)^n)$. The natural hyperk\"ahler Kirwan map
\begin{equation}
  \kappa: H^\ast(BG) \to H^\ast(\CX^r_n(\alpha))\nonumber
\end{equation} 
is surjective.
\end{theorem}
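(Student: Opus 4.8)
The plan is to identify the Kirwan map with a restriction map in equivariant cohomology and then prove surjectivity of the latter by hyperk\"ahler Morse theory. Realize $\CX^r_n(\alpha)$ as the hyperk\"ahler quotient $\muhki(0)/G$ of the flat quaternionic vector space $M = T^\ast\Hom(\mathbb{C}^n,\mathbb{C}^r)$, on which $G = S(U(r)\times(S^1)^n)$ acts linearly with hyperk\"ahler moment map $\muhk = (\mu_{\mathbb R},\mu_{\mathbb C})$. Since $M$ is equivariantly contractible to the origin, $H^\ast(BG)\cong H^\ast_G(M)$, and under this identification $\kappa$ is precisely the restriction $H^\ast_G(M)\to H^\ast_G(\muhki(0))\cong H^\ast(\CX^r_n(\alpha))$ (for generic $\alpha$, where $G$ acts locally freely on the level set). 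So it suffices to prove this restriction is surjective.

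I would factor the restriction through the complex moment map fibre $X=\muci(0)$, writing it as the composite
\begin{equation}
H^\ast_G(M)\longrightarrow H^\ast_G(X)\longrightarrow H^\ast_G\bigl(\muri(\alpha)\cap X\bigr)\cong H^\ast(\CX^r_n(\alpha)),\nonumber
\end{equation}
and treat the two arrows separately. The key geometric input for the second arrow is that the negative gradient flow of $f=\nnormsq{\mu_{\mathbb R}-\alpha}$ preserves $X$: the fibre $X=\muci(0)$ is invariant under the complexified group $G_{\mathbb C}$, while $\grad f$ is tangent to the $G_{\mathbb C}$-orbits, generating the infinitesimal action of $i(\mu_{\mathbb R}-\alpha)\in\fg_{\mathbb C}$. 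Hence $f$ restricts to a function on $X$ whose minimum stratum is $\muri(\alpha)\cap X$ and whose higher strata are indexed, \`a la Kirwan, by optimal destabilizing types $\beta$; I would show this stratification is equivariantly perfect, which gives surjectivity of the second arrow. For the first arrow I would argue that the components of $\mu_{\mathbb C}$ behave as a regular sequence on $M$ in the $G$-equivariant setting—so that $X$ is an equivariant complete intersection—or equivalently run the parallel Morse theory for $\nnormsq{\mu_{\mathbb C}}$, to conclude that $H^\ast_G(M)\to H^\ast_G(X)$ is onto.

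The heart of the argument is equivariant perfection of the Morse stratification of $f$ on $X$. Following the Atiyah--Bott--Kirwan strategy, I would analyze each critical set $C_\beta$ and show that, up to a finite cover, it is a product of lower-rank hyperpolygon-type hyperk\"ahler quotients for the Levi subgroup $\Stab(\beta)$; this sets up an induction on the rank $r$ (and on the number of marked points $n$), compatible with the recursive Betti number formula. Perfection then reduces to verifying that the equivariant Euler class of the negative normal bundle of each $C_\beta$ is not a zero-divisor in $H^\ast_G(C_\beta)$, which I would check using the residual torus acting on the normal directions with nonzero weights, forcing the Thom--Gysin sequences to split and the Morse--Kirwan inequalities to become equalities.

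The main obstacle is that $X=\muci(0)$ is typically singular, so Kirwan's Morse theory for smooth manifolds does not apply verbatim, and I expect the crux to be twofold. First, one must establish convergence of the gradient flow of $f$ on the noncompact, singular fibre $X$, via a properness or distance-decreasing estimate exploiting that $\mu_{\mathbb R}$ controls the flow along $G_{\mathbb C}$-orbits. Second, one must control the contribution of the singular locus so that the perfection argument survives: I would work on the smooth semistable locus and show that the unstable strata meet the singularities in sufficiently high codimension, or invoke a notion of minimal degeneracy adequate for singular spaces. Verifying that the inductive critical-set description remains valid across the singular fibre is where the real difficulty lies.
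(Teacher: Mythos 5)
Your reduction of $\kappa$ to the restriction $H^\ast_G(M)\to H^\ast_G(\muri(\alpha)\cap\muci(0))$ is correct, and the first arrow of your factorization is unproblematic --- indeed $\muci(0)$ is a $G$-invariant cone (the complex moment map is quadratic and the scaling commutes with the linear $G$-action), so $H^\ast_G(M)\to H^\ast_G(\muci(0))$ is already an isomorphism, with no need for a regular-sequence argument. The genuine gap is the second arrow. Everything you propose for it --- equivariant perfection of the stratification of $\nnormsq{\mu_{\mathbb R}-\alpha}$ on the singular cone $X=\muci(0)$, convergence of the gradient flow on this noncompact singular variety, codimension control of how the unstable strata meet the singular locus, ``a notion of minimal degeneracy adequate for singular spaces'' --- is left in the conditional (``I would show\dots''), and these are precisely the points at which the direct approach to hyperk\"ahler Kirwan surjectivity is not known to work in general. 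Kirwan's perfection argument uses smoothness of the ambient space essentially (negative normal bundles along $C_\beta$, Thom--Gysin sequences, the self-intersection formula for the equivariant Euler class), and no substitute on $\muci(0)$ is supplied here, nor is one known for general quiver data. As written, the proposal identifies and restates the difficulty rather than resolving it, so it does not constitute a proof.

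For contrast, the paper avoids Morse theory on $\muci(0)$ altogether. It first reduces in stages through the smooth space $T^\ast Gr(r,n)=T^\ast\Rep(Q)\rred U(r)$, leaving only the residual torus $T$, and forms the \emph{K\"ahler} quotient $\CG^r_n(\alpha)=T^\ast Gr(r,n)\reda{\alpha}T$ in which only the real $T$-moment map is imposed. Surjectivity of $H^\ast(BG)\to H^\ast_T(\CG^r_n(\alpha))$ is obtained by compactifying $T^\ast Gr(r,n)$ via a symplectic cut, applying Kirwan's theorem on the compact cut, and restricting (Propositions \ref{prop-a} and \ref{prop-b}). The complex moment map condition is then absorbed not by a perfection argument on a singular space but by Theorem \ref{thm-homotopy}: the inclusion $\CX^r_n(\alpha)\into\CG^r_n(\alpha)$ of the zero locus of the complex $T$-moment map induces an isomorphism on ($S^1$-equivariant) cohomology, proved by comparing the Morse theory of the residual $S^1$-action scaling the cotangent directions with weight one --- the fixed loci coincide, the normal weights are strictly positive, and the split Thom--Gysin sequences are chased inductively. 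If you want to complete your argument, you should look for a mechanism of this kind that trades the singular fibre $\muci(0)$ for a comparison between two smooth spaces, rather than attempting Kirwan's machinery on the singular cone directly.
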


Our second main result is an explicit recursive procedure for computing the Betti numbers of $\CX^r_n(\alpha)$, again for all $r\geq2$.  In contrast to Konno, who computed the Betti numbers using a natural $S^1$-action on these spaces, we use equivariant Morse theory with the norm-square of a certain moment map. What we prove is\\

\begin{theorem}
The Poincar\'e polynomial $P_t(\CX^r_n(\alpha))$ is independent of $\alpha$ and may be computed by the recursion relation
\begin{equation}
  \frac{P_t(Gr(r,n))}{(1-t^2)^{n-1}} 
    = \sum_{\lambda} \frac{1}{m(\lambda)!} \sum_{\rho \geq \lambda} \frac{t^{2 \beta(\lambda, \rho)}}{(1-t^2)^{s(\lambda, \rho)}}
      {n \choose \rho }
      \prod_{j=1}^{\ell(\lambda)} P_t(\CX^{\lambda_j}_{\rho_j}),\nonumber\end{equation}
where the outermost sum is taken over all integer partitions $\lambda$ of $n$; $\rho$ is a tuple of non-negative integers with the same number of entries as $\lambda$; $\rho\geq\lambda$ refers to lexicographical ordering; and $m(\lambda)!$, $\beta(\lambda,\rho)$, $s(\lambda,\rho)$, and ${n \choose \rho }$ are numbers whose definitions we leave to \S\ref{sec-betti}
\end{theorem}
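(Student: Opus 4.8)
The plan is to compute $P_t(\CX^r_n(\alpha))$ by equivariant Morse theory for the norm-square of a moment map, in Kirwan's style, carried out on a convenient \hk\ presentation of the hyperpolygon space. I would first invoke the \hk\ analogue of the Gelfand--MacPherson correspondence: performing the \hk\ reduction of the flat quiver data in stages, reducing by $SU(r)$ before the torus, should identify $\CX^r_n(\alpha)$ with the \hk\ quotient $T^\ast Gr(r,n)\rreda{\alpha}T$ of the cotangent bundle of the Grassmannian by the residual torus $T=(S^1)^n/S^1\cong(S^1)^{n-1}$ that scales the $n$ homogeneous coordinates; establishing this identification is the first step. I would then restrict to the complex level set $\muci(0)\subset T^\ast Gr(r,n)$ and study the $T$-invariant function $f=\normsq{\mu_{\mathbb R}-\alpha}$, where $\mu_{\mathbb R}$ is the real moment map for $T$. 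This $f$ is minimally degenerate in Kirwan's sense, its minimum is the hyperpolygon space, and its negative gradient flow stratifies $\muci(0)$ by the critical values $\beta$.

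Next I would record the master identity. The equivariant perfection of this stratification over $\BQ$, which underlies Theorem 1, lets the equivariant Morse inequalities collapse to
\begin{equation}
  P_t^T(\muci(0))=\sum_\beta t^{2d(\beta)}\,P_t^T(C_\beta),\nonumber
\end{equation}
with $C_\beta$ the critical set over $\beta$ and $d(\beta)$ its index, i.e. the complex codimension of the associated stratum. The left-hand side is immediate: fibrewise scaling retracts $\muci(0)$ $T$-equivariantly onto the zero section $Gr(r,n)$, and since a torus acts on a Grassmannian in an equivariantly formal way,
\begin{equation}
  P_t^T(\muci(0))=P_t^T(Gr(r,n))=\frac{P_t(Gr(r,n))}{(1-t^2)^{n-1}},\nonumber
\end{equation}
which is exactly the left-hand side of the stated recursion.

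The crux is the analysis of the nonminimal critical sets. A nonzero critical value $\beta$ forces the configuration to split: the $n$ coordinates break into blocks and the $r$-plane distributes among them, so that $C_\beta$ is, up to the action of the surviving diagonal tori, a product $\prod_{j=1}^{\ell(\lambda)}\CX^{\lambda_j}_{\rho_j}$ of lower hyperpolygon spaces governed by partition data $(\lambda,\rho)$. From this description the remaining constants should read off geometrically: the multinomial ${n \choose \rho}$ counts the distributions of coordinates among the blocks, $m(\lambda)!$ discounts permutations of isomorphic blocks (whence the lexicographic normalization $\rho\ge\lambda$), the exponent $s(\lambda,\rho)$ is the rank of the residual torus still acting on $C_\beta$ in equivariant cohomology, and the index $\beta(\lambda,\rho)=d(\beta)$ is the complex codimension of the stratum, read off from the weights of $\beta$ on the negative normal directions.

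Finally I would assemble the pieces. The trivial grouping is the minimum stratum and contributes precisely $P_t(\CX^r_n(\alpha))$, while every other stratum involves only strictly smaller hyperpolygon spaces; rearranging the master identity then produces the recursion verbatim, and independence of $\alpha$ follows by induction, the right-hand side being manifestly $\alpha$-free once the lower spaces are known to be so. I expect the main obstacle to be the third step: identifying $C_\beta$ with an honest product of hyperpolygon spaces in this singular complex-symplectic setting, tracking the surviving tori that produce $s(\lambda,\rho)$, and pinning down the normal weights behind $\beta(\lambda,\rho)$. Two supporting points also need care, and I would ground both in the machinery behind Theorem 1 — the completeness of the stratification, i.e. convergence of the gradient flow on the noncompact $\muci(0)$, and the perfection itself.
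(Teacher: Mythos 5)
Your proposal has the right overall shape --- reduce in stages to a residual torus acting on $T^\ast Gr(r,n)$, run equivariant Morse theory with the norm-square of the torus moment map, identify the nonminimal critical sets with products of smaller hyperpolygon spaces via decompositions of the star quiver, and bookkeep with ${n \choose \rho}$, $m(\lambda)!$, $s(\lambda,\rho)$, $\beta(\lambda,\rho)$ --- and your left-hand side $P_t(Gr(r,n))/(1-t^2)^{n-1}$ agrees with the paper's. But there is a genuine gap at the step you call the master identity. You propose to run Kirwan's theory for $f=\normsq{\mu_{\mathbb{R}}-\alpha}$ on the complex level set $\muci(0)$, which is a \emph{singular} variety, and you assert minimal degeneracy and equivariant perfection there. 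Neither is established by anything you cite: perfection of the norm-square stratification on $\muci(0)$ is essentially the general (and open) problem of hyperk\"ahler Kirwan surjectivity, and it is not ``the machinery behind'' the surjectivity theorem here --- that proof is engineered precisely to avoid doing Morse theory on $\muci(0)$. The paper instead works on the \emph{smooth} space $M=T^\ast Gr(r,n)$, computes the Poincar\'e polynomial of the symplectic quotient $\CG^r_n(\alpha)=M\reda{\alpha}T$ with no complex moment map condition imposed, and then proves separately (Theorem \ref{thm-homotopy}) that the inclusion $\CX^r_n(\alpha)\into\CG^r_n(\alpha)$ induces an isomorphism in cohomology, using the scaling circle action and the fact that the complex moment map has weight one. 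You need this detour, or a substitute for it, to connect your first two steps.

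The second issue is the noncompactness, which you flag but do not resolve. On the noncompact $M$ the gradient flow of $\normsq{\mu_{\mathbb{R}}-\alpha}$ can escape to infinity, so a priori the Morse-theoretic sum acquires extra terms, depending on $\alpha$, from critical sets at infinity that are not products of smaller hyperpolygon spaces; if these survived, the recursion would not close in the stated form. The paper's resolution is the technical heart of its proof: compactify by a symplectic cut $\overline{M}$ with boundary divisor $\partial M$, use $P_t(M\red T)=P_t(\overline{M}\red T)-t^2P_t(\partial M\red T)$, sort the critical sets on the boundary into ordinary, boundary, and anomalous types, and prove by an explicit weight computation on decomposed quivers (Lemma \ref{main-lemma}) that for a sufficiently large cut parameter there are no anomalous critical sets, so all contributions from infinity cancel in pairs. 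Some argument of this kind is indispensable, and it is absent from your outline.
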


\begin{remark} Although the right side of this equation contains an infinite sum over $\rho$, due to the presence of the multinomial coefficient ${n \choose \rho}$ only finitely many of these terms are non-zero. 
\end{remark}

\begin{remark} Similar recursion relations for Nakajima quiver varieties can be deduced using Hausel's arithmetic Fourier transform \cite{HauselBetti}. We give an independent proof of Theorem \ref{main-thm-betti} using only standard Morse theory. It is intriguing that the same combinatorial structures appear both in types of calculations.
\end{remark}

The holomorphic symplectic geometry of hyperpolygon spaces, studied in the final part of this paper, is tied naturally to the theory of Higgs bundles, and hence to known integrable systems --- namely, the Hitchin system.  In the recent work of Godinho-Mandini \cite{GodinhoMandini}, it was observed that the spaces $\CX^2_n(\alpha)$ may be identified with an open subset of a moduli space of stable rank 2 parabolic Higgs bundles on $\BP^1$, and in fact this map turns out to be a symplectomorphism \cite{BFGM}. This establishes the complete integrability of the moduli space $\CX^2_n(\alpha)$ with respect to its natural symplectic structure.  We extend this construction to to all ranks. 

\begin{theorem}
The space $\CX^r_n(\alpha)$ may be identified with a moduli space of rank $r$ parabolic Higgs bundles on $\BP^1$, such that the residue at each marked point lies in the closure of the minimal nilpotent orbit of $\mathfrak{sl}_r$, and whose underlying bundle is trivial. Under this identification, there is a natural Hitchin map
\begin{equation}
  \mathbf h: \CX^r_n(\alpha) \to B\nonumber
\end{equation}
where $B$ is an affine space of half the dimension of $\CX^r_n(\alpha)$. The components of $\mathbf h$ pairwise Poisson commute, and for ranks $2$ and $3$ the components are functionally independent.
\end{theorem}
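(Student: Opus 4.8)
The plan is to make the identification with parabolic Higgs bundles completely explicit and then to read off the integrable structure from the spectral theory of the resulting Higgs fields. First I would use the quiver description of $\CX^r_n(\alpha)$: a point is represented by data $v_i \in \Hom(\BC, \BC^r)$ and $w_i \in \Hom(\BC^r, \BC)$ for $i = 1, \dots, n$, constrained by the complex moment map equations $w_i v_i = 0$ (one per torus factor) and $\sum_i v_i w_i = 0$ (the $\fsl_r$-part of the $\GL_r$-moment map, which is automatically trace-free once $w_iv_i=0$), taken modulo $G_\BC$. Fixing distinct points $p_1, \dots, p_n \in \BP^1$ and writing $D = p_1 + \cdots + p_n$ and $K = K_{\BP^1}$, I would attach to such data the trivial bundle $E = \CO_{\BP^1}^{\oplus r}$, the parabolic flag at $p_i$ cut out by the line $\BC v_i \subset \BC^r = E_{p_i}$, and the Higgs field
\begin{equation}
  \Phi = \Big(\sum_{i=1}^n \frac{v_i w_i}{z - p_i}\Big)\, dz \ \in\ H^0\big(\BP^1, \End(E)\otimes K(D)\big). \nonumber
\end{equation}
By construction $\Res_{p_i}\Phi = v_i w_i$ is a rank-$\leq 1$ matrix that is nilpotent exactly when $w_i v_i = 0$, hence lies in $\Omincl \subset \fsl_r$, while $\sum_i v_i w_i = 0$ is precisely the residue theorem making $\Phi$ regular at infinity. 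The substantive point is to match hyperk\"ahler stability of $(v_i, w_i)$ with parabolic slope-stability under a suitable dictionary between $\alpha$ and the parabolic weights, and to invert the construction, so that the assignment is an isomorphism onto the space of stable rank $r$ parabolic Higgs bundles on $(\BP^1, D)$ with residues in $\Omincl$ and trivial underlying bundle. This follows the rank-$2$ blueprint of Godinho--Mandini \cite{GodinhoMandini} and \cite{BFGM}, adapted to arbitrary $r$.

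Next I would define the Hitchin map via the characteristic coefficients $\mathbf h(\Phi) = (a_2, \dots, a_r)$, where $a_j = \Tr(\wedge^j \Phi)$. The key observation is that the residues, being rank $\leq 1$, satisfy $\wedge^j(\Res_{p_i}\Phi) = 0$ for every $j \geq 2$, so each $a_j$ has at worst a simple pole along $D$ and therefore defines a section $a_j \in H^0(\BP^1, K^j(D)) = H^0(\BP^1, \CO(n - 2j))$. Taking $B = \bigoplus_{j=2}^r H^0(\BP^1, K^j(D))$, a direct count gives
\begin{equation}
  \dim B = \sum_{j=2}^r (n - 2j + 1) = (r-1)(n - r - 1) = \tfrac12 \dim_\BC \CX^r_n(\alpha), \nonumber
\end{equation}
so $B$ is an affine space of half the dimension, as required.

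For the Poisson commutativity I would invoke the standard mechanism underlying all Hitchin systems. Identifying the tangent space to the moduli space at $(E, \Phi)$ with the hypercohomology of the deformation complex $\End(E) \xrightarrow{[\Phi,\,\cdot\,]} \End(E)\otimes K(D)$, refined by the strongly-parabolic condition along $D$, the holomorphic symplectic form is induced by Serre duality. Each component of $\mathbf h$ is a polynomial in the $\Tr(\Phi^k)$, so its Hamiltonian vector field deforms only the $\bar\partial$-operator of $E$ and leaves $\Phi$ fixed; these deformations span an isotropic subspace for the Serre pairing, so any two such vector fields pair to zero and $\{a_j, a_{j'}\} = 0$. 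The only additional bookkeeping over the classical (nonsingular, nonparabolic) case concerns the nilpotent residues, for which I would appeal to the known Poisson geometry of strongly parabolic Higgs moduli.

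Finally, functional independence in ranks $r = 2, 3$. Since $\dim B = \tfrac12 \dim_\BC \CX^r_n(\alpha)$ and the components Poisson commute, functional independence is equivalent to $\mathbf h$ being dominant, i.e. to the generic fibre being Lagrangian of dimension $\dim B$. I would analyze the generic fibre through the spectral curve $X_b \subset \mathrm{Tot}(K(D))$ defined by $\lambda^r + a_2 \lambda^{r-2} + \cdots + a_r = 0$, whose fibre over $b$ is a compactified Jacobian. Here the degeneracy enters decisively: because the residues are nilpotent, the eigenvalues collide over each $p_i$ and $X_b$ is necessarily singular there, so one is forced to work with compactified Jacobians of singular spectral curves. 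The main obstacle---and the reason the statement is limited to $r \leq 3$---is to control these singularities precisely enough to verify that for generic $b$ the fibre has the expected dimension $\dim B$, equivalently that $d\mathbf h$ is surjective at a generic point. For $r = 2$ the spectral curve acquires only nodal-type degenerations and this is the computation of \cite{GodinhoMandini, BFGM}; for $r = 3$ the list of degeneration types along $D$ is still short enough to be handled explicitly, which I would do by exhibiting a single $\Phi$ at which $d\mathbf h$ has full rank and invoking semicontinuity. For $r \geq 4$ the singularities at $D$ become too intricate for this direct analysis, which is exactly where the method stops.
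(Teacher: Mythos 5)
Your construction of the Higgs field, the parabolic structure, the Hitchin base, and the dimension count all match the paper, and your observation that rank-one residues kill all poles of order $\geq 2$ in the characteristic coefficients is a clean variant of the paper's cyclic-trace argument. But the two substantive claims --- Poisson commutativity and functional independence --- are where your proposal has real gaps. For commutativity you propose to transport the classical Serre-duality/deformation-complex argument from the moduli space of strongly parabolic Higgs bundles. This requires two things you do not supply: first, that the holomorphic symplectic form on $\CX^r_n(\alpha)$ agrees with the one on the Higgs moduli space under your identification (this is the content of \cite{BFGM} and is only established for $r=2$; for $r\geq 3$ the image is a positive-codimension subvariety of the strongly parabolic locus and no such symplectomorphism is proved); and second, that commuting Hamiltonians on the ambient space restrict to commuting functions on that subvariety, which fails for general symplectic subvarieties unless you first show the Hitchin flows are tangent to it. The paper avoids both issues entirely by computing on the quiver side: since $\phi_i=x_iy_i$ is a $\mathfrak{gl}_r$-moment map, the residues satisfy the Lie--Poisson relations for the hyperpolygon symplectic structure itself, and an explicit $r$-matrix-style computation with $\Delta(z,w)=\phi(z)/(w-z)+\phi(w)/(z-w)$ gives $\{I_m(z),I_n(w)\}=mn\,\Tr(\Delta(z,w)[\phi(w)^{n-1},\phi(z)^{m-1}])=0$. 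You should either reproduce that computation or prove the missing symplectic identification.

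For functional independence your plan --- exhibit one $\Phi$ with $d\mathbf h$ of full rank and invoke semicontinuity --- is a legitimate strategy in principle, but you never produce the point, and the reason the argument works precisely for $r\leq 3$ is left as an unexamined assertion. The paper's mechanism is different and is the actual mathematical content here: by Lemma \ref{lemma-spectral-singularities} the local equation of the spectral curve at a marked point is $\lambda^r+\sum_i z^{\floor{(i+1)/2}}a_i(z)\lambda^{r-i}$, so $\lambda^2/(z-p_i)$ is integral over the local ring and hence regular on the normalization; therefore \emph{every} line bundle on the normalized spectral curve produces, via the spectral correspondence, a Higgs field with $\phi^2=0$ along $D$. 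For $r\leq 3$ a square-zero matrix automatically has rank $\leq 1$, so the residues land in $\Omincl$ and $\mathbf h$ is surjective, hence dominant, hence its components are independent; for $r=4$ the paper exhibits a square-zero residue of rank $2$, showing $\mathbf h$ is not surjective. Without this (or an actual full-rank point for $r=3$), your last paragraph is a statement of intent rather than a proof. Also, a small correction: for $r=2$ the generic spectral curve is smooth at the marked points (the local equation is $\lambda^2+za(z)=0$ with $a(0)\neq 0$), not nodal.
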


\begin{remark}
There is an interesting distinction between the cases $r=2,\,3$,  and $r > 3$. In the $r=2$ case, the minimal nilpotent orbit is also the regular nilpotent orbit, and generic spectral curves are smooth. For $r=3$, the generic spectral curve is not smooth, but contains only transverse self-intersections for singularities. For $r > 3$, the singularities are worse and it is not clear \emph{a priori} whether the components of the Hitchin map are functionally independent. 
\end{remark}


\noindent\textbf{Acknowledgements.} The problem of generalizing Konno's results was suggested to us by Tam\'as Hausel.  We would also like to thank Tom Baird, Philip Boalch, Peter Crooks, Andrew Dancer, Jacques Hurtubise, Lisa Jeffrey, and Alessia Mandini for useful discussions.  We express our gratitude to the organizers of the ``Workshop on Advances in Hyperk\"ahler and Holomorphic Symplectic Geometry'', held in March 2012 at the Banff International Research Station, and to the organizers of the ``Workshop on Moduli Spaces and their Invariants in Mathematical Physics'', held in June 2013 at the Centre de recherches math\'ematiques (CRM) in Montr\'eal, where some formative steps in this work were taken.

%
%
%
%

\section{Polygons and Hyperpolygons}

\subsection{Star Quivers}

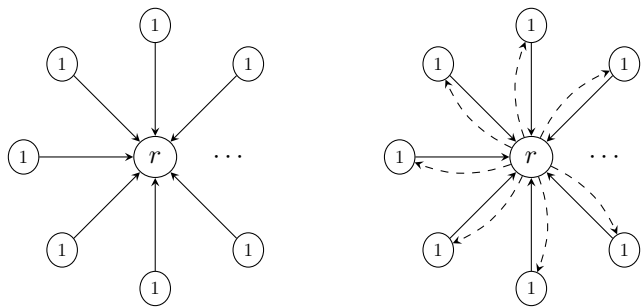
\begin{figure}[h!]
\begin{tikzpicture}
  \node[draw, ellipse] (Lsink) at (0,0) {$r$};
  \node[draw, ellipse] (Rsink) at (5,0) {$r$};

  \foreach \a in {45, 90, ..., 359}
  {
    \node[draw, ellipse, scale=0.7] (Lv) at ($(Lsink)+(\a:1.75)$) {$1$};
    \path[-stealth] (Lv) edge (Lsink);

    \node[draw, ellipse, scale=0.7] (Rv) at ($(Rsink)+(\a:1.75)$) {$1$};
    \path[-stealth] (Rv) edge (Rsink);
    \path[-stealth, dashed] (Rsink) edge[bend left=20] (Rv);
  };

    \node (Ld) at ($(Lsink)+(0:1)$) {$\cdots$};
    \node (Rd) at ($(Rsink)+(0:1)$) {$\cdots$};

\end{tikzpicture}
\caption{Left: quiver whose moduli of representations is given by $\CP^r_n(\alpha)$. Right: the doubled quiver used to construct $\CX^r_n(\alpha)$.}
\label{fig-polygon-quiver}
\end{figure}

We take a moment now to formally introduce the main objects of this article: the moduli spaces of polygons and hyperpolygons in a complex projective space.  Begin by fixing a pair of integers, $r \geq 2$ and $n \geq r+1$.  \begin{definition}  A \emph{star-shaped quiver} (henceforth, a ``star quiver'') of \emph{rank} $r$ and \emph{twist} $n$ is a labelled finite directed graph $Q$ with $n+1$ nodes and $n$ arrows, such that\begin{itemize}\item exactly one node, the \emph{sink}, has label `$r$';\item each of the remaining nodes has label `$1$';\item from each node labelled `$1$', there departs exactly one arrow, whose tip is the sink,\end{itemize}
as depicted in Figure \ref{fig-polygon-quiver}.\end{definition}

The \emph{dimension vector} of $Q$ is the $(n+1)$-tuple of positive integers $\bd=(r,1,\dots,1)$, and this vector determines $Q$ uniquely as a star quiver. 
By $\Rep(Q)$, we mean the vector space of representations of $Q$ (respecting the labelling $\bd$), namely
\begin{equation}
  \Rep(Q) = \bigoplus_{i=1}^n \Hom(\BC, \BC^r) \iso \Hom(\BC^n, \BC^r) = \BC^{n\times r}.\nonumber
\end{equation}
We denote elements of $\Rep(Q)$ by $x$, and by the above isomorphism we identify $x$ with an $r \times n$ matrix, whose columns we denote by $x_1, \dots, x_n$.
There is a natural linear action of $U(r) \times (S^1)^n$ on $\Rep(Q)$; however the overall diagonal scalars act trivially. Quotienting by this subgroup, we obtain an effective action by the group
\begin{equation}
  G := P(U(r) \times (S^1)^n).\nonumber
\end{equation}
The Lie algebra $\fg$ of this group may be identified with $\mathfrak{su}_r \oplus \BR^n$, with center isomorphic to $\BR^n$. Hence a central element $\alpha \in \fgd$ may be identified with a vector in $\BR^n$ --- and we denote its components by $(\alpha_1, \dots, \alpha_n)$. We assume henceforth that the components of $\alpha$ are all positive. We call $\alpha$ the \emph{length vector}. 
If $S$ is a subset of $[n] = \set{1, \dots, n}$, put 
\begin{equation}
  \alpha_S:=\sum_{i\in S}{\alpha_i}.\nonumber
\end{equation} 

\begin{definition} We say that $\alpha \in \BR^n_{> 0}$ is \emph{generic} if, for all $0 \leq r' \leq r$ and $S \subset [n]$ such that $(r'-1)(\#S-r'-1) \geq 0$, we have
\begin{equation}
  r' \alpha_{[n]} - r \alpha_S \neq 0.\nonumber
\end{equation}

\end{definition}

\begin{example} In the case $r=2$ and assuming $\alpha_i > 0$ for all $i \in [n]$, the only non-vacuous conditions come from setting $r' = 1$, in which case we obtain the conditions $\alpha_{[n]} - 2\alpha_S \neq 0$, 
i.e. that every subset $S \subseteq [n]$ is either short ($\alpha_{[n]}-2\alpha_S > 0$) or long ($\alpha_{[n]}-2\alpha_S < 0$). This is exactly the genericity condition that appears for usual polygon spaces \cite{Klyachko, KonnoPolygon}.
\end{example}

The real and complex moment maps for the action of $G$ on $T^\ast \Rep(Q)$ are given by\footnote{Strictly speaking, these are certain scalar multiples of the real and complex moment maps, chosen to eliminate various factors of $-i$ and $\frac{1}{2}$ in the calculations of \S \ref{sec-cohomology}.}
\begin{align}
  \mur(x,y) &= ((xx^\ast - y^\ast y)_0, \normsq{x_1}-\normsq{y_1}, \dots, \normsq{x_n}-\normsq{y_n}) \nonumber\\
  \muc(x,y) &= ((xy)_0, y_1 x_1, \dots, y_n x_n)\nonumber
\end{align}
where $(\cdot)_0$ denotes the trace-free part of a matrix. The real moment map for the action of $G$ on $\Rep(Q)$ is given by the restriction of $\mur$ to $\Rep(Q)$.  We define the \emph{polygon space} $\CP_n^r(\alpha)$ to be the symplectic quotient
\begin{equation} \CP^r_n(\alpha) = \Rep(Q) \reda{\alpha} G. \nonumber\end{equation}
Similarly, we define the \emph{hyperpolygon space} $\CX_n^r(\alpha)$ to be the hyperk\"ahler quotient
\begin{equation} \CX^r_n(\alpha) = T^\ast \Rep(Q) \rreda{(\alpha,0)} G. \nonumber\end{equation}
By results of Nakajima \cite[Theorem 2.8, Corollary 4.2]{Nakajima94} we have the following.
\begin{theorem} 
If $\alpha$ is generic, then $G$ acts freely on $\mur^{-1}(\alpha) \cap \muc^{-1}(0)$, and $\CX_n^r(\alpha)$ is a smooth complete hyperk\"ahler manifold of complex dimension $2(r-1)(n-r-1)$. If non-empty, $\CP_n^r(\alpha)$ is a smooth compact K\"ahler manifold of complex dimension 
$\frac{1}{2} \dim \CX^r_n(\alpha)$. If $\CP^r_n(\alpha)$ is non-empty, there is a natural inclusion $T^\ast \CP_n^r(\alpha) \into \CX_n^r(\alpha)$ with dense image. Furthermore, the diffeomorphism type of $\CX^r_n(\alpha)$ is independent of $\alpha$.
\end{theorem}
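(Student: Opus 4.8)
The plan is to deduce every assertion from Nakajima's general theory of quiver varieties \cite{Nakajima94}, so that the only genuine work is translating the genericity condition on $\alpha$ into Nakajima's stability condition. The heart of the matter is freeness of the $G$-action on $\mur^{-1}(\alpha) \cap \muc^{-1}(0)$; once this is established, smoothness, the dimension formula, and completeness are formal. To prove freeness I would lift to $\tilde G = U(r) \times (S^1)^n$ and analyze the infinitesimal stabilizer of a point $(x,y)$. An element $(\xi, s) \in \fu(r) \oplus \BR^n$ fixes $(x,y)$ precisely when $\xi x_i = s_i x_i$ and $y_i \xi = s_i y_i$ for every $i$; thus each nonzero column $x_i$ and each nonzero row $y_i$ is an eigenvector of the skew-Hermitian endomorphism $\xi$. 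A nontrivial stabilizer, modulo the central $S^1$ already quotiented out, forces $\xi$ to have at least two distinct eigenvalues, hence a proper $G$-invariant splitting $\BC^r = W \oplus W^\perp$ with $r' = \dim W$, together with the subset $S = \set{i : x_i \in W, \ y_i|_{W^\perp} = 0}$.

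Restricting the equation $\mur(x,y) = \alpha$ to the block $W$ and taking traces --- using that the $\fsu_r$-component of $\alpha$ vanishes and that $\muc(x,y)=0$ forces $y_i x_i = 0$ --- I expect to recover exactly the linear relation $r' \alpha_{[n]} - r \alpha_S = 0$, with the dimensional bookkeeping on the admissible $(r',\#S)$ producing the inequality $(r'-1)(\#S - r' - 1) \geq 0$ appearing in the definition of genericity. Genericity then rules out such a splitting, so the stabilizer is trivial and the action is free. Freeness makes $\mur^{-1}(\alpha) \cap \muc^{-1}(0)$ a manifold on which $G$ acts freely and properly, whence $\CX^r_n(\alpha)$ is smooth, and the count $\dim_\BR T^\ast\Rep(Q) - 4\dim_\BR G = 4nr - 4(r^2 + n - 1) = 4(r-1)(n-r-1)$ gives the stated complex dimension. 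Completeness follows from the general principle that a hyperk\"ahler quotient of a complete (here flat) hyperk\"ahler manifold by a compact group acting freely and properly is complete.

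The polygon space is handled by the same stabilizer analysis applied to $\Rep(Q)$ with $y=0$: the equations $\normsq{x_i} = \alpha_i$ confine each $x_i$ to a sphere, so the level set is compact and $\CP^r_n(\alpha)$, when nonempty, is a smooth compact K\"ahler manifold of complex dimension $nr - (r^2+n-1) = (r-1)(n-r-1) = \tfrac12 \dim_\BC \CX^r_n(\alpha)$. For the inclusion $T^\ast \CP^r_n(\alpha) \into \CX^r_n(\alpha)$, I would identify $\CP^r_n(\alpha)$ with the zero section $\set{y=0}$ in $\CX^r_n(\alpha)$ and use the complex-symplectic description of the quotient: points whose underlying $x$ is polygon-stable form an open set on which the $y$-coordinate supplies exactly the cotangent fibre directions, yielding the map. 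Density is a codimension estimate --- the complementary locus, where $x$ is unstable but $(x,y)$ is stable, is a proper analytic subset --- which I would make transparent via the $\Cstar$-action $t \cdot (x,y) = (x, ty)$, whose flow retracts a dense open subset onto the zero section. Finally, independence of the diffeomorphism type is Nakajima's variation result: within a chamber of generic parameters the GIT quotient $\muc^{-1}(0)^{s}/G_\BC$ is literally unchanged, and wall-crossings are bridged by the hyperk\"ahler metric, giving a locally trivial family over the space of generic $\alpha$ \cite[Cor.~4.2]{Nakajima94}.

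The main obstacle is the first step: the precise translation of genericity into freeness, that is, arranging the signs and the trace computation so that restriction to an invariant block reproduces \emph{both} the equality $r'\alpha_{[n]} = r\alpha_S$ and the exact inequality constraint on $(r', \#S)$, thereby matching the walls in the definition of genericity term for term. Everything downstream is then a direct appeal to Nakajima's framework.
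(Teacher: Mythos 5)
Your proposal is correct and follows essentially the same route as the paper, which simply cites Nakajima's general results on quiver varieties (\cite[Theorem~2.8, Corollary~4.2]{Nakajima94}) for all of these assertions. The additional detail you supply---translating the genericity condition $r'\alpha_{[n]} - r\alpha_S \neq 0$ into triviality of stabilizers via invariant splittings $\BC^r = W \oplus W^\perp$, the dimension count, and the $\Cstar$-retraction argument for density of $T^\ast \CP^r_n(\alpha)$---is consistent with the standard arguments underlying Nakajima's framework and goes beyond what the paper itself records.
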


\begin{remark} The cotangent bundle $T^\ast \Rep(Q)$ can be naturally identified with the space of representations of the \emph{doubled quiver}, i.e. the quiver with the same underlying set of vertices, but to which we add an arrow going in the opposite direction for every arrow of $Q$. This is depicted on the right side of Figure \ref{fig-polygon-quiver}, with the doubled arrows indicated by dashed lines.
\end{remark}

\subsection{Polygons}
In this section, we give two different interpretations of $\CP^r_n(\alpha)$ as a moduli spaces of polygons. First, observe that the real vector space $\mathfrak{su}_r$ is naturally Euclidean, with its Euclidean structure induced by the trace norm. Consider the map $\Rep(Q) \to \oplus_{i=1}^n\mathfrak{su}_r$ given by
\begin{equation}
  x_i \mapsto v_i := (x_i x_i^\ast)_0 = x_i x_i^\ast  - (\normsq{x_i}/r) \mathbf{1}_r\nonumber
\end{equation}
Imposing the real moment map equations $\normsq{x_i} = \alpha_i$, we find that
\begin{align}
  \normsq{v_i} &= \Tr(v_i v_i^\ast) = \Tr((x_i x_i^\ast  - (\normsq{x_i}/r) \mathbf{1}_r)(x_i x_i^\ast  - (\normsq{x_i}/r) \mathbf{1}_r)) \nonumber\\
 &= (1 - r^{-1}) \norm{x_i}^4 = (1 - r^{-1} ) \alpha_i^2,\nonumber
\end{align}
\noindent and thus the condition $\normsq{x_i}=\alpha_i$ is equivalent to $\normsq{v_i}=(1-r^{-1})\alpha_i^2$. Furthermore, the moment map equation $\sum_i (x_i x_i^\ast)_0$ gives $\sum_i v_i = 0$. Hence the collection $(v_1, \dots, v_n)$ consists of vectors of fixed lengths adding to zero, i.e. it is exactly the data of a closed polygon with fixed edge lengths\footnote{Note, however, that in general not every closed polygon with fixed edge lengths can be constructed in this way, since not every vector $v$ can be written as $(x x^\ast)_0$.}. See Figure \ref{fig-euclid-polygon}. Finally, since the action of $G$ on $\mathfrak{su}_r$ is the adjoint action, $G$ acts as a subgroup of the Euclidean group of $\mathfrak{su}_r$. Hence $\CP^r_n(\alpha)$ is a submanifold of the moduli space of polygons in $\mathfrak{su}_r$, with fixed edge lengths, considered up to the overall action of $SU(r)$.

\begin{figure}[h!]
\begin{tikzpicture}
  \node[fill=black, ellipse, scale=0.2] (1) at (0,0) {};
  \node[fill=black, ellipse, scale=0.2] (2) at (1,1) {};
  \node[fill=black, ellipse, scale=0.2] (3) at (2,3) {};
  \node[fill=black, ellipse, scale=0.2] (4) at (0,2.5) {};
  \node[fill=black, ellipse, scale=0.2] (5) at (-1,1) {};

    \path[-stealth] (1) edge node [below right] {$v_1$} (2);
    \path[-stealth] (2) edge node [right] {$v_2$} (3);
    \path[-stealth] (3) edge node [above left] {$v_3$} (4);
    \path[-stealth] (4) edge node [above left] {$v_4$} (5);
    \path[-stealth] (5) edge node [below left] {$v_5$} (1);

\end{tikzpicture}
\caption{Euclidean polygon determined by a point in a polygon space.}
\label{fig-euclid-polygon}
\end{figure}
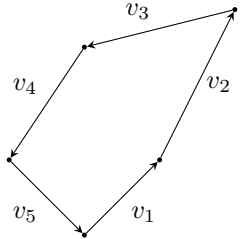

\begin{example} Consider the special case $r=2$. In this case, $\mathfrak{su}_2 \iso \mathfrak{so}_3 \iso \BR^3$, and furthermore, the map $x_i \to (x_i x_i^\ast)_0$ surjects onto the 2-sphere. This realizes $\CP^2_n(\alpha)$ as the moduli space of $n$-gons in $\BR^3$, with fixed edge lengths $(\alpha_1/2, \dots, \alpha_n / 2)$, considered up to the overal rotational action of $PSU(2) \iso SO(3)$. This is exactly the special case considered by Klyachko \cite{Klyachko}.

\end{example}


\begin{remark}
This construction can be generalized in the following fashion. Let $\CO_1, \dots, \CO_n$ be coadjoint orbits in the dual of some Lie algebra $\fg$. The moment map for the coadjoint action of $G$ on the product $\CO_1 \times \dots \times \CO_n$ is given by $\mu(x_1, \dots, x_n) = x_1 + \dots + x_n$.  Consequently, the symplectic quotient $(\CO_1 \times \dots \times \CO_n) \red G$ may be regarded as the moduli space of closed $n$-gons in $\fgd$, considered up to rotations by $G$, whose edges lie in a set of fixed coadjoint orbits.
\end{remark}


Finally, we give a different interpretation of the space $\CP^r_n(\alpha)$. Note that
\begin{equation}
  \Rep(Q) \reda{\alpha} (S^1)^n \iso \BP^{r-1} \times \dots \times \BP^{r-1},\nonumber
\end{equation}
where the $i$th factor of $\BP^{r-1}$ is equipped with $\alpha_i$ times the Fubini-Study form. Any point in $\prod_{i=1}^n \BP^{r-1}$ can be thought of as giving the vertices of a polygon in $\BP^{r-1}$, and the diagonal action of $PGL(r)$ acts by projective transformations on this polygon.  From King \cite[\S 6]{King}, we have:

\begin{theorem} Suppose that $\alpha$ is integral, i.e. $\alpha = d\chi$ for some character $\chi: (S^1)^n \to S^1$. Then as a complex manifold, $\CP^r_n(\alpha)$ may be naturally identified with the moduli space of $\alpha$-semistable $n$-gons in $\BP^{r-1}$, considered up to overall projective equivalence.
\end{theorem}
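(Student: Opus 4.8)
The plan is to realize $\CP^r_n(\alpha)$ as a geometric invariant theory quotient and then match the resulting stability condition with the classical notion of $\alpha$-(semi)stability for weighted point configurations. I would begin with reduction in stages for the Kähler quotient defining $\CP^r_n(\alpha)$. Quotienting $\Rep(Q)$ first by the torus $(S^1)^n$ produces, as already recorded in the excerpt, the product $\prod_{i=1}^n \BP^{r-1}$ in which the $i$th factor carries $\alpha_i$ times the Fubini--Study form, while the residual symmetry is $PU(r)$ (the center of $U(r)$ acts trivially on $\prod_{i=1}^n \BP^{r-1}$). Hence $\CP^r_n(\alpha) \iso \left(\prod_{i=1}^n \BP^{r-1}\right) \reda{\alpha} PU(r)$, a compact Kähler quotient. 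A point of $\prod_{i=1}^n \BP^{r-1}$ is exactly a tuple of vertices of an $n$-gon in $\BP^{r-1}$, and $PU(r)$ acts by projective transformations, so this is the correct arena.

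The integrality hypothesis $\alpha = d\chi$ is precisely what is needed for the second step: it guarantees that each class $\alpha_i\,\omega_{\mathrm{FS}}$ is the first Chern class of a genuine ample line bundle $\OO(\alpha_i)$ on $\BP^{r-1}$, so that the polarization $\CL = \OO(\alpha_1)\boxtimes\cdots\boxtimes\OO(\alpha_n)$ furnishes an $SL(r)$-linearization on $\prod_{i=1}^n \BP^{r-1}$. The Kempf--Ness theorem then identifies the Kähler quotient above, as a complex variety, with the GIT quotient
\begin{equation}
  \CP^r_n(\alpha) \iso \left(\prod_{i=1}^n \BP^{r-1}\right) \git SL(r),\nonumber
\end{equation}
taken with respect to $\CL$, where $SL(r)$ acts through $PGL(r)$. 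By definition, the right-hand side is the moduli space of $\CL$-semistable configurations of $n$ points in $\BP^{r-1}$, up to the overall projective equivalence given by $PGL(r)$.

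It remains to check that $\CL$-semistability coincides with $\alpha$-semistability in the sense of weighted point configurations, and this is exactly the content of King's analysis \cite[\S 6]{King}. Applying the Hilbert--Mumford criterion to the one-parameter subgroups of $PGL(r)$ adapted to flags in $\BC^r$, a configuration is $\CL$-semistable iff for every proper nonzero subspace $V \subseteq \BC^r$, setting $r' = \dim V$ and $S = \set{i \suchthat p_i \in \BP(V)}$, one has $r\alpha_S \leq r'\alpha_{[n]}$, i.e. $r'\alpha_{[n]} - r\alpha_S \geq 0$, with stability being the corresponding strict inequality. Translating through the dictionary $\theta_0 = -\alpha_{[n]}/r$, $\theta_i = \alpha_i$, these are precisely King's $\theta$-stability inequalities for subrepresentations of the star quiver, and they reproduce the combinatorial expressions $r'\alpha_{[n]} - r\alpha_S$ appearing in the definition of genericity. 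The main obstacle is the careful bookkeeping in this final step: one must normalize the weight $\theta$ so that it descends to a character of $G = P(U(r)\times(S^1)^n)$ (equivalently, so that $\theta\cdot\bd = 0$), and then verify that King's numerical criterion for the star quiver matches, term by term, the standard Mumford stability of weighted points, including identifying which pairs $(r', S)$ can actually produce a destabilizing flag. Once this match is established, the biholomorphism follows; and for generic $\alpha$ every semistable configuration is stable, so the quotient is in fact a smooth manifold.
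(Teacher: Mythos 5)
Your argument is correct, and it is worth noting that the paper offers no proof of this statement at all: it simply quotes the result from King~\cite[\S 6]{King}, whose framework treats the symplectic quotient $\Rep(Q)\reda{\alpha}G$ directly as an \emph{affine} GIT quotient of the representation space by $GL(r)\times(\BC^\times)^n$ linearized by the character $\chi_\theta$, with semistability governed by King's numerical criterion $\theta(\dim W)\le 0$ on subrepresentations. You instead reduce in stages to the compact quotient $\bigl(\prod_{i=1}^n\BP^{r-1}\bigr)\reda{0}PU(r)$ and then run Kempf--Ness and Hilbert--Mumford on the projective variety $\prod\BP^{r-1}$ polarized by $\OO(\alpha_1)\boxtimes\cdots\boxtimes\OO(\alpha_n)$; this is the classical weighted-point-configuration picture, and it buys a more transparent geometric meaning for the destabilizing data (a linear subspace $V$ together with the vertices lying in $\BP(V)$), at the cost of having to re-derive the numerical criterion rather than reading it off from King's general theorem. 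The two routes agree because a subrepresentation of the star quiver is exactly a pair $(V,S)$ with $x_i\in V$ for $i\in S$, and for fixed $V$ the extremal choice $S=\set{i\suchthat x_i\in V}$ recovers your inequality $r\alpha_S\le r'\alpha_{[n]}$. Two small points deserve attention in your final bookkeeping step: first, your normalization $\theta_0=-\alpha_{[n]}/r$ need not be integral even when $\alpha$ is, so to stay within King's integral-character framework you should clear denominators (replace $\theta$ by $r\theta$, which does not change semistability); second, the Kempf--Ness identification requires matching the moment-map level with the linearization, which is exactly where the hypothesis $\alpha=d\chi$ enters, so you should say explicitly that the reduced level for the $PU(r)$ factor is $0$ while the torus levels $\alpha_i$ are absorbed into the polarization. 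Neither point is a gap, only a place where the details you defer to ``careful bookkeeping'' genuinely need to be written out.
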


\begin{remark} A related moduli space of \emph{twisted $n$-gons} was constructed by Khesin and Soloviev \cite{KhSo2013}, as the natural configuration space for a generalized pentagram map.
\end{remark}

\subsection{Nilpotent Orbits}
To help motivate the connection with Hitchin systems, we recall some basic facts about nilpotent orbits in $\mathfrak{sl}_r$. Identifying $\mathfrak{sl}_r$ with the space of $r \times r$ traceless complex matrices, the Lie-Poisson structure is given explicitly by the relation
\begin{equation}
  \{x_{ij}, x_{kl}\} = \delta_{jk} x_{il} - \delta_{il} x_{kj},\nonumber
\end{equation}
where $x_{ij}$ denotes the corresponding component of an $r \times r$ matrix. The symplectic leaves of this Poisson structure are exactly the adjoint orbits. Of particular interest are the nilpotent orbits, i.e. the adjoint orbits of a nilpotent $r \times r$ matrix. The Zariski closures of nilpotent orbits are singular affine Poisson varieties, which admit resolutions by the cotangent bundles of (partial) flag varieties\footnote{Outside of type $A$, this is not always the case.}.

There is a natural partial order on nilpotent orbits, defined by
\begin{equation}
  \CO_1 \leq \CO_2 \ \textrm{if and only if}\ \CO_1 \subseteq \overline{\CO}_2.\nonumber
\end{equation}
With respect to this partial order, there are two distinguished orbits: the maximal, regular orbit $\Oreg$, and the minimal orbit $\Omin$. These orbits are completely characterized by their Jordan form representatives, which include
\begin{equation}
   \begin{bmatrix} 0 & 1 & & &  \\   & 0 & 1 & & \\ & & \ddots & \ddots & & \\ & & & 0 & 1 \\  &  &   &  & 0 \end{bmatrix}\ \ \ \mbox{and} \ \ \
   \begin{bmatrix} 0 & 1 & & \\ 0 & 0 & & \\  & & \ddots& \\ & & & 0\end{bmatrix}\nonumber
\end{equation}
\noindent in $\Oreg$ and $\Omin$, respectively.

\begin{proposition} The closure $\Oregcl$ of the regular nilpotent orbit consists of the nilpotent cone
\begin{equation}
  \Oregcl = \{ x \in \mathfrak{sl}_r \suchthat x^r = 0 \},\nonumber
\end{equation}
while the closure $\Omincl$ of the minimal nilpotent orbit is the variety
\begin{equation}
  \Omincl = \{ x \in \mathfrak{sl}_r \suchthat x^2 = 0, \ \rk(x) \leq 1 \}.\nonumber
\end{equation}
\end{proposition}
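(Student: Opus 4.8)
The plan is to handle the two closures separately. In each case the right-hand set is visibly closed and invariant under the conjugation (adjoint) action of $\GL_r$, and it contains the orbit in question; so the real content is the reverse inclusion, that the orbit is dense in this set. I would phrase each direction so that the easy inclusions are purely formal and isolate the one genuine point in each case.

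For $\Oregcl$, first note that by the Cayley--Hamilton theorem a traceless matrix $x$ satisfies $x^r = 0$ exactly when its characteristic polynomial is $t^r$, i.e. exactly when $x$ is nilpotent; hence $\mathcal N := \{x \in \fsl_r \suchthat x^r = 0\}$ is precisely the nilpotent cone, and it is Zariski closed and $\GL_r$-invariant. The key structural input is that $\mathcal N$ is irreducible: every nilpotent matrix is conjugate to a strictly upper-triangular one, so $\mathcal N$ is the image of the irreducible variety $\GL_r \times \fn$ (with $\fn$ the strictly upper-triangular matrices) under $(g,N)\mapsto \Ad(g)N$, and the image of an irreducible variety is irreducible. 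Since a regular nilpotent has centralizer equal to the polynomials in it, of the minimal possible dimension $r$, its orbit $\Oreg$ is the unique nilpotent orbit of maximal dimension; as there are only finitely many nilpotent orbits (Jordan classification), $\Oreg$ is the unique orbit open and dense in the irreducible $\mathcal N$, whence $\Oregcl = \mathcal N$.

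For $\Omincl$, put $V := \{x \in \fsl_r \suchthat x^2 = 0, \ \rk(x) \le 1\}$, which is closed, being cut out by the vanishing of the entries of $x^2$ together with the $2\times 2$ minors of $x$. I would first pin down $V$ set-theoretically: any nonzero $x \in V$ has rank exactly one and satisfies $x^2=0$, which forces every Jordan block to have size at most two and exactly one block to have size two, i.e. Jordan type $(2,1^{r-2})$; thus $V = \Omin \cup \{0\}$. It then remains only to see $0 \in \Omincl$. Taking a representative $x_0 = u v^{\top}$ with $v^{\top} u = 0$, the scalings $t x_0 = u\,(t v)^{\top}$ are again rank-one and square-zero for every $t \neq 0$, hence all lie in $\Omin$; letting $t \to 0$ yields $0 \in \Omincl$. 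Combining, $V = \Omin \cup \{0\} \subseteq \Omincl \subseteq V$, so $\Omincl = V$.

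I expect the main obstacle to be the reverse inclusion $\mathcal N \subseteq \Oregcl$ in the regular case, namely that every nilpotent is a degeneration of regular nilpotents; the remaining inclusions and the entire minimal-orbit argument are essentially formal. Rather than constructing explicit one-parameter degenerations for an arbitrary Jordan type, I resolve this through irreducibility of the nilpotent cone. The only nontrivial inputs are then that there are finitely many nilpotent orbits and that $\Oreg$ is the one of top dimension, both of which follow from the Jordan normal form and the centralizer-dimension computation sketched above.
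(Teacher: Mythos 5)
Your argument is correct. The paper itself offers no proof of this proposition — it is stated as a standard fact about nilpotent orbits in $\fsl_r$ — so there is nothing to compare against; your write-up supplies the usual argument (irreducibility of the nilpotent cone plus finiteness of orbits and the centralizer-dimension count for $\Oregcl$, and the explicit identification $V=\Omin\cup\{0\}$ with a scaling degeneration for $\Omincl$), and every step checks out.
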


Now, consider $T^\ast \BP^{r-1}$ as the hyperkähler quotient of $T^\ast \BC^r$, with coordinates $x \in \BC^r$ and $y \in \BC^r$, with $x$ a column vector and $y$ a row vector. The left action of $GL(r)$ on $(x,y)$ given by $g \cdot (x,y) = (gx, yg^{-1})$ descends to an action on $T^\ast \BP^{r-1}$, with complex moment map given by
\begin{equation}
  \muc(x,y) = xy \in \mathfrak{gl}_r.\nonumber
\end{equation}
Clearly, $\rk(\muc(x,y)) \leq 1$, and we see from the hyperkähler quotient construction that $\muc(x,y)^2 = (xy)(xy) = x(yx)y = 0$. Hence $\muc$ takes values in $\Omincl$.
\begin{proposition} \label{prop-nilpotent-resolution}
The complex moment map $\muc: T^\ast \BP^{r-1} \to \Omincl$ is Poisson, surjective, and generically one-to-one.
\end{proposition}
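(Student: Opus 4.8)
The plan is to treat the three assertions separately, handling the two set-theoretic claims first and reserving the Poisson property for last, as it is where the real content lies.

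\emph{Surjectivity.} I would use the description of $T^\ast\BP^{r-1}$ as the $\Cstar$-quotient of the locus $\set{(x,y) : x \neq 0,\ yx = 0}$ inside $T^\ast\BC^r$, where $\Cstar$ acts by $t\cdot(x,y) = (tx, t^{-1}y)$ and $\muc[(x,y)] = xy$ is well-defined because $xy$ is $\Cstar$-invariant. Given $M \in \Omincl$ I would exhibit a preimage directly. If $M = 0$, any point of the zero section ($y = 0$, $x \neq 0$) maps to $0$. If $M \neq 0$, then $M \in \Omin$ has rank exactly one, so $M = xy$ for a nonzero column $x$ and row $y$; the relation $M^2 = x(yx)y = 0$ then forces $yx = 0$, so $[(x,y)]$ is a genuine point of $T^\ast\BP^{r-1}$ mapping to $M$.

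\emph{Generic injectivity.} Over the dense open orbit $\Omin$ I would show the fibre is a single point. If $xy = x'y' \neq 0$, both matrices have the same one-dimensional image, so $x' = tx$ for some $t \in \Cstar$; substituting, $x(ty' - y) = 0$ with $x \neq 0$ forces $y = ty'$, so $(x',y') = (tx, t^{-1}y)$ lies in the same $\Cstar$-orbit. Hence $\muc$ restricts to a bijection onto $\Omin$, whereas the fibre over $0$ is the whole zero section. Since $\Omin$ is dense in $\Omincl$, this proves the map is generically one-to-one.

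\emph{Poisson.} This is the crux. The strategy is to recognize $\muc$ as the holomorphic moment map for the residual Hamiltonian action and to invoke the general principle that an equivariant moment map is a Poisson morphism into $\mathfrak{g}^\ast$ with its Lie--Poisson bracket. On $T^\ast\BC^r$, equipped with the canonical bracket normalized so that $\set{y_j, x_i} = \delta_{ij}$, the map $(x,y)\mapsto xy$ is the moment map for $g\cdot(x,y) = (gx, yg^{-1})$, since $\langle xy, \xi\rangle = y\xi x = \Tr(\xi\, xy)$. As the central $S^1$ by which we reduce is normal in $GL_r$ and fixes $xy$, reduction in stages shows the induced map on $T^\ast\BP^{r-1}$ is again the moment map for the residual $SL_r$-action, its image being trace-free precisely because $\Tr(xy) = yx = 0$. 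Writing $M_{ij} = x_i y_j$, the reduced bracket then computes to $\set{M_{ij}, M_{kl}} = \delta_{jk}M_{il} - \delta_{il}M_{kj}$, matching the Lie--Poisson relation displayed in \S2.3. The main obstacle is exactly this last matching: pinning down the correct normalization of the symplectic form, bracket, and the identification $\mathfrak{sl}_r \cong \mathfrak{sl}_r^\ast$ so that the computed bracket agrees with the stated one on the nose rather than up to a sign or scalar (precisely the factors the footnote alludes to), together with the careful justification that the bracket descends correctly through the symplectic reduction by $S^1$.
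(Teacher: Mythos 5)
Your proposal is correct and follows essentially the same route as the paper: the same rank-one factorization $\phi = xy$ with $\phi^2 = 0$ forcing $yx = 0$ for surjectivity and generic injectivity, and the same appeal to the general fact that an equivariant moment map is Poisson (which the paper dispatches in one sentence, whereas you usefully spell out the reduction-in-stages and normalization details).
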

\begin{proof} That $\muc$ is Poisson follows, essentially, by definition of the moment map. To see that it is surjective and generically one-to-one, note that $\Omincl = \Omin \cup \{0\}$. It is easy to check that $\muc^{-1}(0) = \BP^{r-1} \subset T^\ast \BP^{r-1}$. Now suppose $\phi \in \Omin$. Since $\phi$ is rank 1, it can be factored as $\phi = xy$ for some row vector $x$ and column vector $y$. These vectors are unique up to the equivalence $(x,y) \sim (\lambda x, \lambda^{-1} y)$. Moreover, the nilpotency condition $\phi^2=0$ implies that $0 = (xy)^2 = (xy)(xy) = (xy) \phi$, and since $\phi$ is not identically zero, we find that $xy=0$. Hence, $[x,y]$ determines a unique point in $T^\ast \BP^{r-1}$.
\end{proof}

%
%
%
%

\section{Cohomology} \label{sec-cohomology}

\subsection{Kirwan Surjectivity} \label{sec-surjectivity}
We begin by recalling that a Kähler manifold $M$ is \emph{circle compact}\footnote{In the algebro-geometric context, the analogous condition is sometimes called \emph{semiprojectivity}, see e.g. \cite{HRV2013}} if it admits a Hamiltonian $S^1$-action, such that the following conditions are satisfied:
\begin{enumerate}
  \item $M^{S^1}$, the fixed point set, is compact;
  \item the moment map is proper and bounded below.
\end{enumerate}
Given a circle compact manifold $M$, we may construct a compactification $\overline{M}$ using the symplectic cut construction \cite{Lerman} as follows. Let $\Lambda \gg 0$ and define $\overline{M}$ to be
\begin{equation}
  \overline{M} = (M \times \BC) \reda{\Lambda} S^1,\nonumber
\end{equation}
where the $S^1$ acts diagonally on $M \times \BC$. At worst, $\overline{M}$ has orbifold singularities, and by elementary Morse theory with the moment map, its orbifold diffeomorphism type is independent of $\Lambda$ for $\Lambda$ sufficiently large. We then have a natural inclusion $M \into \overline{M}$. We denote by $\partial M$ the boundary divisor
\begin{equation}
  \partial M = \overline{M} \setminus M \iso M \reda{\Lambda} S^1,\nonumber
\end{equation}
which itself has at worst orbifold singularities.

\begin{proposition} \label{prop-a} Suppose that $M$ has a Hamiltonian $S^1 \times G$-action such that the $S^1$-action makes $M$ circle compact. Then the restriction $H^\ast_G(\overline{M}) \to H^\ast_G(M)$ is surjective. Furthermore, we have the relation
\begin{equation}
  P_t^G(M) = P_t^G(\overline{M}) - t^2 P_t^G(\partial M).\nonumber
\end{equation}
\end{proposition}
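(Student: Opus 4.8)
The plan is to read off both assertions from the long exact sequence of the pair $(\overline{M}, M)$, using the extra circle symmetry created by the symplectic cut to force the connecting maps to vanish. First I would make the cutting circle explicit: the symplectic cut $\overline{M} = (M \times \mathbb{C}) \reda{\Lambda} S^1$ carries a residual Hamiltonian $S^1$-action, induced by rotation of the $\mathbb{C}$-factor, which commutes with $G$; let $\bar\phi$ denote its moment map. By construction $\partial M$ is exactly the maximum critical set of $\bar\phi$, it is fixed by the residual circle, and its normal bundle $\nu$ in $\overline{M}$ is a $G$-equivariant complex line bundle on which the residual circle acts with a single nonzero weight. Because $\overline{M}$ is compact (with at worst orbifold singularities) and we use rational coefficients, $\partial M$ is a smooth suborbifold of real codimension $2$ and the Thom isomorphism gives $H^\ast_G(\overline{M}, M) \cong H^{\ast-2}_G(\partial M)$. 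The long exact sequence of the pair then reads
\begin{equation}
\cdots \to H^{k-2}_G(\partial M) \xrightarrow{i_!} H^k_G(\overline{M}) \xrightarrow{j^\ast} H^k_G(M) \xrightarrow{\delta} H^{k-1}_G(\partial M) \to \cdots.\nonumber
\end{equation}

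If I can show that $\delta$ vanishes in every degree, equivalently that the Gysin map $i_!$ is injective, then the sequence splits into short exact sequences $0 \to H^{k-2}_G(\partial M) \to H^k_G(\overline{M}) \to H^k_G(M) \to 0$. The first assertion (surjectivity of $j^\ast$) is then immediate, and comparing dimensions degree by degree gives $P_t^G(\overline{M}) = P_t^G(M) + t^2 P_t^G(\partial M)$, which rearranges to the stated relation. The shift by $2$ here is exactly the codimension of $\partial M$, consistent with $\partial M$ being the top stratum of $\bar\phi$.

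The crux, and the step I expect to be the main obstacle, is the vanishing of $\delta$. By the self-intersection formula $i^\ast i_!(a) = a \cup e_G(\nu)$, so $i_!$ is injective as soon as the equivariant Euler class $e_G(\nu) \in H^2_G(\partial M)$ is a non-zero divisor. This need not hold on the nose, since the residual circle is \emph{not} a subgroup of $G$, so $e_G(\nu)$ alone may well be a zero divisor. The fix is to carry out the argument $G \times S^1$-equivariantly, where $e_{G \times S^1}(\nu) = e_G(\nu) + u$ is monic in the degree-two generator $u$ of $H^\ast(BS^1)$ and hence visibly a non-zero divisor in $H^\ast_G(\partial M)[u]$; this is precisely the Atiyah--Bott non-zero-divisor criterion identifying $\partial M$ as a perfect top critical stratum for $\bar\phi$. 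To descend from $G \times S^1$ back to $G$ I would invoke equivariant formality of the Hamiltonian circle action on the compact space $\overline{M}$, which identifies $H^\ast_{G \times S^1}(\overline{M})$ with $H^\ast_G(\overline{M})[u]$ as modules and lets the $G \times S^1$-equivariant splitting specialize at $u = 0$ to the desired $G$-equivariant one. Equivalently, one may simply quote that the moment map of a Hamiltonian circle action is a $G$-equivariantly perfect Morse--Bott function and peel off only its top stratum $\partial M$.
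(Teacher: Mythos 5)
Your proof is correct and follows essentially the same route as the paper: the paper likewise observes that the cut space carries a residual Hamiltonian $S^1$-action with $\overline{M}^{S^1} = M^{S^1} \sqcup \partial M$ and deduces both claims from ($G$-equivariant) Morse theory with the $S^1$-moment map, of which your long-exact-sequence argument with the Atiyah--Bott non-zero-divisor criterion for the top stratum $\partial M$ is just the detailed version. You have simply made explicit the perfection argument that the paper compresses into ``the result then follows by ordinary Morse theory.''
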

\begin{proof} The cut construction endows $\overline{M}$ with a residual Hamiltonian $S^1$-action. Moreover, the fixed-points of this action are given by
\begin{equation}
  \overline{M}^{S^1} = M^{S^1} \sqcup \partial M.\nonumber
\end{equation}
The result then follows by ordinary Morse theory with the $S^1$-moment map.


\end{proof}

\begin{proposition} \label{prop-b}
Suppose that $M$ has a Hamiltonian $(S^1 \times G \times K)$-action such that the $S^1$-action makes $M$ circle compact. Then the $K$-equivariant Kirwan map $H_{G \times K}^\ast(M) \to H^\ast_K(M \red G)$ is surjective.
\end{proposition}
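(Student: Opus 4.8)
The plan is to reduce to the classical compact case by means of the symplectic cut, and then to finish with a short diagram chase fed by Proposition~\ref{prop-a}. The essential point is that the cut (with respect to $S^1$) and the symplectic reduction (with respect to $G$) commute, so that the compactification of the reduced space is the reduction of the compactification.

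First I would form the cut space $\overline{M}$ using the circle-compactifying $S^1$-action, exactly as in the construction preceding Proposition~\ref{prop-a}. Since the $S^1$-action commutes with the $G\times K$-action, $\overline{M}$ carries a residual Hamiltonian $(G\times K)$-action and the inclusion $M\into\overline{M}$ is $(G\times K)$-equivariant. By reduction in stages, cutting by $S^1$ and reducing by $G$ commute, giving a $K$-equivariant identification $\overline{M}\red G\iso\overline{M\red G}$, where the right-hand side is the cut of the reduced space $M\red G$ with respect to its induced $S^1$-action; moreover this induced action makes $M\red G$ circle compact, since compactness of the fixed set and properness/boundedness of the moment map descend to the quotient. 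With these identifications in hand, I would assemble the commuting square (in rational cohomology)
\[
\begin{array}{ccc}
H^\ast_{G\times K}(\overline{M}) & \xrightarrow{\ \kappa\ } & H^\ast_K(\overline{M}\red G)\\[2pt]
\big\downarrow & & \big\downarrow\\[2pt]
H^\ast_{G\times K}(M) & \longrightarrow & H^\ast_K(M\red G),
\end{array}
\]
in which the horizontal arrows are the $K$-equivariant Kirwan maps and the vertical arrows are restriction along the respective inclusions.

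The top arrow is surjective by the classical (orbifold) Kirwan surjectivity theorem, applied to the \emph{compact} space $\overline{M}$; the commuting $K$-action upgrades this to the $K$-equivariant statement, and the argument is valid with rational coefficients even in the presence of the orbifold singularities produced by the cut. The right vertical arrow is surjective by Proposition~\ref{prop-a}, applied to the circle-compact space $M\red G$ with its $S^1\times K$-action, together with the identification $\overline{M}\red G\iso\overline{M\red G}$. A diagram chase then concludes: given $y\in H^\ast_K(M\red G)$, lift it through the right vertical surjection to $z\in H^\ast_K(\overline{M}\red G)$, lift $z$ through the top Kirwan surjection to $w\in H^\ast_{G\times K}(\overline{M})$, and take $x$ to be the restriction of $w$; commutativity of the square forces $x$ to map to $y$, so the bottom arrow is surjective.

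The hard part will be the geometric input rather than the formal chase: namely, verifying rigorously that the cut and the $G$-reduction commute at the level of the (possibly singular) quotients, so that $\overline{M}\red G\iso\overline{M\red G}$ holds $S^1$- and $K$-equivariantly, and that circle-compactness is genuinely inherited by $M\red G$. Once these compatibilities are secured, both the invocation of Proposition~\ref{prop-a} on $M\red G$ and the application of ordinary orbifold Kirwan surjectivity to the compact space $\overline{M}$ are routine.
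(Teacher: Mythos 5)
Your proposal is correct and follows essentially the same route as the paper: compactify via the symplectic cut, apply Kirwan surjectivity to land in $H^\ast_K(\overline{M\red G})$, and then use Proposition~\ref{prop-a} to restrict surjectively to $H^\ast_K(M\red G)$. The only cosmetic difference is that you run (orbifold) Kirwan surjectivity on the compact cut $\overline{M}$ and chase a square of restriction maps, whereas the paper runs equivariant Morse theory directly on $M$ (using properness of the $S^1\times G$ moment map) and identifies the resulting composite surjection with the Kirwan map with $S^1$ acting trivially; the reduction-in-stages identification $\overline{M}\red G\iso\overline{M\red G}$ that you rightly flag as the point needing care is implicit in both arguments.
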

\begin{proof} Since the $S^1 \times G$ moment map is proper, it follows from equivariant Morse theory with its norm-square \cite{Kirwan} that the map 
\begin{equation}
  H_{S^1 \times G \times K}^\ast(M) \to H^\ast_K(\overline{M \red G})\nonumber
\end{equation}
\noindent is surjective. By Proposition \ref{prop-a}, the restriction $H^\ast_K(\overline{M \red G}) \to H^\ast_K(M \red G)$ is surjective. Hence we have a surjection $H_{S^1 \times G \times K}^\ast(M) \to H^\ast_K(M \red G)$. However, it is easy to check (using the cut construction) that this map is essentially the Kirwan map, with the $S^1$ factor acting trivially. Hence we deduce that $H_G^\ast(M) \to H^\ast(M \red G)$ is surjective.
\end{proof}

We use this construction to study the cohomology of hyperpolygon spaces.  To begin, let $T$ be the quotient of $(S^1)^n$ by the diagonal subgroup.  Reduction in stages gives
\begin{align}
  \CP^r_n(\alpha) &\iso (T^\ast \Rep(Q) \rred U(r)) \rreda{(\alpha,0)} T\nonumber \\ &\iso T^\ast Gr(r, n) \rreda{(\alpha,0)} T.\nonumber
\end{align}
Denote by $\CG^r_n(\alpha)$ the non-compact symplectic quotient
\begin{equation}
  \CG^r_n(\alpha) = T^\ast Gr(r,n) \reda{\alpha} T,\nonumber
\end{equation}
and note that there is a natural inclusion of $\CX^r_n(\alpha)$ into $\CG^r_n(\alpha)$ as the zero set of the complex $T$-moment map. The hyperk\"ahler Kirwan map $H^\ast(BG) \to H^\ast(\CX^r_n(\alpha))$ factors as
\begin{equation}
  H^\ast(BG) \to H^\ast_T(\CG^r_n) \to H^\ast(\CX^r_n(\alpha)),\nonumber
\end{equation}
and the first map is surjective by Proposition \ref{prop-b} above. 

\begin{theorem} \label{thm-homotopy} 
The natural inclusion $\CX^r_n(\alpha) \into \CG^r_n(\alpha)$ induces an equivalence in both ordinary and $S^1$-equivariant cohomology.
\end{theorem}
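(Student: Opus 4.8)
The plan is to realize both $\CX^r_n(\alpha)$ and $\CG^r_n(\alpha)$ as circle-compact manifolds for one and the same $S^1$-action, and then to compare them by running the equivariant Morse theory of the associated moment map on each. The circle in question acts on $T^\ast Gr(r,n)$ by scaling the cotangent fibres, $e^{i\theta}\cdot(x,y)=(x,e^{i\theta}y)$. This commutes with $T$ and so descends to both $\CG^r_n(\alpha)$ and $\CX^r_n(\alpha)$; since it fixes the $x$-coordinates and scales $y$, its moment map $\psi$ is a positive multiple of $\tfrac12\norm{y}^2$, which is bounded below and, using genericity of $\alpha$ together with compactness of the underlying polygon space $\CP^r_n(\alpha)$, proper. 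Thus both spaces are circle compact and the equivariant Morse theory of $\psi$ (as in the discussion of circle-compact manifolds above) applies verbatim to each.

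The key observation is that the two spaces share the same $S^1$-fixed locus. The complex moment map $\muc\colon \CG^r_n(\alpha)\to\ft^\ast_\BC$ is $S^1$-equivariant for the weight-one action on $\ft^\ast_\BC$, since each component $y_ix_i$ scales by $e^{i\theta}$. As $0$ is the only $S^1$-fixed point of a weight-one representation, every $S^1$-fixed point of $\CG^r_n(\alpha)$ must satisfy $\muc=0$ and therefore lies in $\CX^r_n(\alpha)=\muc^{-1}(0)$. Conversely $\CX^r_n(\alpha)\subset\CG^r_n(\alpha)$, so the two fixed loci coincide; write $F=\bigsqcup_j F_j$ for this common critical set of $\psi$, decomposed into connected components.

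I would then compare the two Morse stratifications directly. Because $\alpha$ is generic, $0$ is a regular value of $\muc$, so $\CX^r_n(\alpha)$ is smooth and its normal bundle inside $\CG^r_n(\alpha)$ is identified via $d\muc$ with the pullback of the weight-one representation $\ft^\ast_\BC$. Along each component $F_j$ the normal bundle in $\CG^r_n(\alpha)$ therefore splits as the normal bundle in $\CX^r_n(\alpha)$ together with these extra $\muc$-directions, all of which carry positive weight and hence lie in the ascending directions of $\psi$; as the Morse index at $F_j$ counts only the descending (negative-weight) directions, these extra directions do not contribute, and the index $\lambda_j$ of $F_j$ is the same whether computed in $\CX^r_n(\alpha)$ or in $\CG^r_n(\alpha)$. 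Since $\psi$ is an equivariantly perfect moment map on each circle-compact space, both Morse spectral sequences degenerate at $E_1=\bigoplus_j H^{\ast-\lambda_j}_{S^1}(F_j)$, and the inclusion $\CX^r_n(\alpha)\into\CG^r_n(\alpha)$ is filtration-preserving and induces the identity on this common page. It follows that the restriction is an isomorphism in $S^1$-equivariant cohomology, and letting the $S^1$-factor act trivially recovers the statement for ordinary cohomology.

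The main obstacle I anticipate is the analytic input rather than the formal comparison: one must verify that $\psi$ is genuinely proper on the noncompact space $\CG^r_n(\alpha)$ (equivalently, that the common fixed locus $F$ is compact) so that the Morse machinery of the preceding section truly applies. Beyond that, the crucial bookkeeping is to confirm that the directions normal to $\CX^r_n(\alpha)$ contribute with positive weight \emph{uniformly} across \emph{all} components $F_j$; it is precisely this uniform sign that forces $\lambda_j^{\CG}=\lambda_j^{\CX}$ and thereby upgrades an equality of equivariant Betti numbers to an isomorphism of cohomology rings.
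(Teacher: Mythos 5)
Your proposal is correct and follows essentially the same route as the paper: the same $S^1$-action scaling the cotangent fibres, the same identification of the fixed loci via the weight-one equivariance of $\muc$, and the same observation that the normal directions to $\CX^r_n(\alpha)$ in $\CG^r_n(\alpha)$ carry strictly positive weight so that Morse indices agree. Your spectral-sequence degeneration at $E_1$ is just a repackaging of the paper's inductive diagram chase through the split Thom--Gysin sequences (via the Atiyah--Bott lemma), so the two arguments coincide in substance.
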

\begin{proof} First we show that that $\CX^r_n(\alpha)^{S^1} = \CG^r_n(\alpha)^{S^1}$. Suppose $[x,y] \in \CG^r_n(\alpha)^{S^1}$. Then there exists a homomorphism $\phi: S^1 \to T$ such that for all $s \in S^1$, $(x, sy) = (\phi(s) x, y \phi^{-1}(s))$. Since $T$ is abelian, we find that
\begin{equation}
  s \mu_c(x,y) = \mu_c(x, sy) = \mu_c(\phi(s) x, y \phi^{-1}(s)) = \mu_c(x,y),\nonumber
\end{equation}
from which it follows that $\mu_c(x,y) = 0$ and $[x,y] \in \CX^r_n(\alpha)$. Now consider the isotropy representation of $S^1$ on $T_x \CG^r_n(\alpha)$. It fits into the short exact sequence

\begin{equation}
 0 \longrightarrow T_x \CX^r_n(\alpha) \longrightarrow T_x \CG^r_n(\alpha) \longrightarrow N_x \CX^r_n(\alpha) \longrightarrow 0.\nonumber
\end{equation}
Since the complex moment map is of weight $1$ with respect to the $S^1$-action, we deduce that $N_x \CX^r_n(\alpha)$ is a representation consisting of strictly positive weights. Hence, we find that the $S^1$-moment map, when restricted to $\CX^r_n(\alpha)$, has the same critical sets with the same Morse indices as it does on $\CG^r_n(\alpha)$.

Consider the Morse stratifications of $\CG^r_n(\alpha)$ and $\CX^r_n(\alpha)$. We denote by $\CG_C^\pm$ and $\CX_C^\pm$ the super- and sub-level sets, respectively. By the Atiyah-Bott lemma \cite[13.4]{YangMillsRiemannSurface} the associated Thom-Gysin sequence splits into short exact sequences, and so for each critical set $C$ we obtain the commutative diagram below, where the downward arrows are induced by the natural inclusions.
\begin{equation} \label{diagram-thom-restriction}
  \begin{tikzpicture}[baseline=(current bounding box.center)]
    \matrix (m) [matrix of math nodes,row sep=3em,column sep=3em,minimum width=2em]
    {
      0 & H_{S^1}^{k-\lambda_C}(C) & H_{S^1}^k(\CG_C^+) & H_{S^1}^k(\CG_C^-) & 0\\
      0 & H_{S^1}^{k-\lambda_C}(C) & H_{S^1}^k(\CX_C^+) & H_{S^1}^k(\CX_C^-) & 0\\
    };
    
    \path[-stealth] (m-1-2) edge node [left] {$\iso$} (m-2-2);
    \path[-stealth] (m-1-3) edge (m-2-3);
    \path[-stealth] (m-1-4) edge (m-2-4);
    
    \foreach \n [count=\np from 2] in {1,2,3,4}
    {
      \path[-stealth] (m-1-\n) edge (m-1-\np);
      \path[-stealth] (m-2-\n) edge (m-2-\np);
    };
  \end{tikzpicture}
\end{equation}
Let $C_0$ be the critical set corresponding to the absolute minimum of the moment map. In this case we have $\CG_{C_0}^- = \CX_{C_0}^- = \emptyset$, and so by the above diagram we have $H_{S^1}^k(\CG_{C_0}^+) \iso H_{S^1}^k(\CX_{C_0}^+)$. Now assume by induction that the rightmost verical arrow of diagram \eqref{diagram-thom-restriction} is an isomorphism. By a standard diagram chase, we find that $H_{S^1}^k(\CG_C^+) \iso H_{S^1}^k(\CX_C^+)$. Continuing this way by induction, we find that $H_{S^1}^\ast(\CG^r_n(\alpha)) \iso H_{S^1}^\ast(\CX^r_n(\alpha))$. By equivariant formality, this implies that we also have $H^\ast(\CG^r_n(\alpha)) \iso H^\ast(\CX^r_n(\alpha))$.
\end{proof}

Combining Theorem \ref{thm-homotopy} with the discussion preceding it, we thereby have a complete proof of


\begin{theorem}\label{main-thm-kirwan}  The natural hyperk\"ahler Kirwan map
\begin{equation}
  \kappa: H^\ast(B(S(U(r) \times (S^1)^n))) \to H^\ast(\CX^r_n(\alpha))\nonumber
\end{equation} 

\noindent{is surjective.}

\end{theorem}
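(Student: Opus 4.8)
The plan is to reduce the \emph{hyperk\"ahler} Kirwan surjectivity to two applications of \emph{ordinary} Kirwan surjectivity, using the factorization of $\kappa$ supplied by reduction in stages together with Theorem \ref{thm-homotopy}. Recall from the discussion preceding the statement that $\kappa$ factors as
$$H^\ast(BG) \longrightarrow H^\ast_T(\CG^r_n) \longrightarrow H^\ast(\CX^r_n(\alpha)),$$
where $T = (S^1)^n/S^1$ is the residual torus, $\CG^r_n = T^\ast Gr(r,n)$ carries its induced $T$-action, and $\CX^r_n(\alpha)$ sits inside the K\"ahler quotient $\CG^r_n(\alpha) = T^\ast Gr(r,n)\reda{\alpha} T$ as the zero locus of the complex $T$-moment map. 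It therefore suffices to prove that each of the two arrows is surjective.

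For the first arrow I would apply Proposition \ref{prop-b} with $M = \Rep(Q) = \BC^{n \times r}$, reducing group $U(r)$, and residual symmetry group $K = T$, the scaling $S^1$-action making $M$ circle compact. Here $M \red U(r) \iso Gr(r,n)$, so Proposition \ref{prop-b} yields a surjection $H^\ast_{U(r)\times T}(M) \to H^\ast_T(Gr(r,n))$. Since $M$ is equivariantly contractible the source is $H^\ast(B(U(r)\times T))$, which agrees rationally with $H^\ast(BG)$ via the finite covering $G \to U(r)\times T$; and since the zero section $Gr(r,n)\into T^\ast Gr(r,n)$ is a $T$-equivariant deformation retract, the target is $H^\ast_T(\CG^r_n)$. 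This gives surjectivity of the first arrow.

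The second arrow is where Theorem \ref{thm-homotopy} carries the weight of the argument. That theorem asserts that the inclusion $\CX^r_n(\alpha)\into\CG^r_n(\alpha)$ induces an isomorphism on cohomology (indeed $S^1$-equivariantly). Under this identification the second arrow becomes the \emph{ordinary} Kirwan map $H^\ast_T(\CG^r_n) \to H^\ast(\CG^r_n(\alpha))$ of the K\"ahler quotient of $\CG^r_n = T^\ast Gr(r,n)$ by $T$. As $T^\ast Gr(r,n)$ is circle compact (via the cotangent-scaling $S^1$), this map is surjective by a second application of Proposition \ref{prop-b}, now with reducing group $T$ and trivial residual group. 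Composing the two surjections establishes the surjectivity of $\kappa$.

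The substance of the proof is concentrated entirely in Theorem \ref{thm-homotopy}, which is the step I expect to be the genuine obstacle; hyperk\"ahler Kirwan surjectivity is delicate precisely because the complex moment map level set need not be cut out transversally, and the force of that theorem is that it replaces the hyperk\"ahler quotient $\CX^r_n(\alpha)$ by the K\"ahler quotient $\CG^r_n(\alpha)$ without altering the cohomology, thereby placing us within reach of the classical Morse theory of the norm-square moment map. Once that reduction is granted, the rest is the formal bookkeeping of factoring the Kirwan map and invoking ordinary surjectivity at each stage.
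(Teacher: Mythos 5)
Your proposal is correct and follows essentially the same route as the paper: factor $\kappa$ through the K\"ahler quotient $\CG^r_n(\alpha)$, obtain surjectivity onto $H^\ast(\CG^r_n(\alpha))$ from Proposition \ref{prop-b} via circle compactness, and transfer to $\CX^r_n(\alpha)$ using the isomorphism of Theorem \ref{thm-homotopy}. Your two explicit applications of Proposition \ref{prop-b} (first for the $U(r)$ reduction with residual $T$-symmetry, then for the $T$ reduction) are just a careful spelling-out of the paper's terser ``discussion preceding'' the theorem, and you correctly identify Theorem \ref{thm-homotopy} as the step carrying the real content.
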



\begin{remark}
The kernel of the $S^1$-equivariant Kirwan map may be computed by the abelianization technique of Hausel-Proudfoot \cite{HauselProudfoot}, which reduces this problem to an explicit calculation in an associated hypertoric variety. The cohomology rings $H^\ast(\CX^2_n(\alpha))$ are well known; we plan to investigate $H^\ast(\CX^r_n(\alpha))$ in future work. It is clear from preliminary calculations that the kernel of the Kirwan map is significantly more complicated for $r \geq 3$ than it is for $r=2$.
\end{remark}

\subsection{Compactified Morse Theory} \label{sec-morse-theory} 

An immediate consequence of Theorem \ref{thm-homotopy} is that $P_t(\CX^r_n(\alpha)) = P_t(\CG^r_n(\alpha))$. We would like to compute the latter side of this identity by using equivariant Morse theory with the norm-square of the $T$-moment map. Unfortunately, since $T^\ast Gr(r,n)$ is non-compact we have to take some care to justify the Morse theory. By the Proposition \ref{prop-a}, we have
\begin{equation} \label{eqn-poincare-difference}
  P_t(\CG^r_n(\alpha)) =   P_t(\overline{\CG^r_n(\alpha)}) - t^2 P_t(\partial \CG^r_n(\alpha)).
\end{equation}
Note that both $\overline{\CG^r_n(\alpha)}$ and $\partial \CG^r_n(\alpha)$ may be constructed as symplectic quotients by $T$ of the compact manifolds $\overline{M}$ and $\partial M$, where $M = T^\ast Gr(r,n)$. Hence, we may compute their Poincar\'e polynomials by equivariant Morse theory with the norm-squares of their respective moment maps. Both terms in the right hand side of the above equation contain contributions from critical sets at infinity, which have a complicated dependence on $\alpha$. The goal of this section is to show that these terms exactly cancel.

Since the critical set for the norm-square of the moment map on $\partial M$ is contained in the critical set for the norm-square of the moment map on $\overline{M}$, the connected components $C$ of the critical set on $\partial M$ fall into three possible types:
\begin{itemize}
  \item those of the form $C = C' \cap \partial M$, where $C'$ is a component of the critical set on $\overline{M}$ not contained in $\partial M$;
  \item those for which the Morse index on $\overline{M}$ is strictly larger than the Morse index on $\partial M$; and
  \item those for which the Morse index of $C$ on $\partial{M}$ is the same as its index on $\overline{M}$.
\end{itemize}
Respectively, we call these types \emph{ordinary}, \emph{boundary}, and \emph{anomalous}. The dichotomy between the boundary and anomalous critical sets is depicted schematically in Figure \ref{fig-anomalous-distinction}. Intuitively, the anomalous critical sets are the possible limit points of gradient trajectories on $M$ that escape to infinity.
\begin{figure}[h]
\centering
\begin{tikzpicture}[auto]

\tikzset{->-/.style={decoration={
  markings,
  mark=at position #1 with {\arrow{>}}},postaction={decorate}}}

  \coordinate (x) at ( 4, 0);
  
  \coordinate (y) at ( 0, 2);
  \coordinate (ry) at (2, 0);

  \foreach \n in {0, 1}
  {
    \coordinate (c1-\n) at ( $\n*(x)+\n*0.4*(x)$ );
    \coordinate (c2-\n) at ( $(c1-\n)+(x)$ );
    \coordinate (c3-\n) at ( $(c1-\n)+(x)+(y)$ );
    \coordinate (c4-\n) at ( $(c1-\n)+(y)$ );
    \coordinate (c5-\n) at ( $0.5*(c2-\n)+0.5*(c3-\n)$ ) {};

    \coordinate (c6-\n) at ( $(c5-\n) - 0.5*(ry)$ );

    \coordinate (c7-\n) at ( $0.7*(c1-\n)+0.3*(c2-\n)+0.5*(y)$ );
    \coordinate (c8-\n) at ( $(c2-\n)-0.2*(y)$ );

    \coordinate (c9-\n) at ( $(c2-\n)+0.5*(y)+0.1*(x)$ );

    \coordinate (c10-\n) at ( $(c5-\n) + (135:1)$ );
    \coordinate (c11-\n) at ( $(c5-\n) + (225:1)$ );

    \draw[fill=black!20!white,black!20!white] (c1-\n) rectangle (c3-\n);

    \node[fill=black, ellipse, scale=0.4] (n) at (c5-\n) {};

   \draw[->-=0.5,thick] (c2-\n) to (c5-\n);
    \draw[->-=0.5,thick] (c3-\n) to (c5-\n);

    \node (n1) at (c7-\n) {$M$};
    \node (n2) at (c8-\n) {$\partial M$};
    \node (n3) at (c9-\n) {$C$};
  }

   \draw[->-=0.5,thick] (c2-0) to [bend right=45] (c6-0);
  \draw[->-=0.5,thick] (c3-0) to [bend left=45] (c6-0);
  \draw[->-=0.5,thick] (c5-0) to [bend left=0] (c6-0);

  \draw[->-=0.5,thick] (c10-1) to (c5-1);
  \draw[->-=0.5,thick] (c11-1) to (c5-1);
  \draw[->-=0.5,thick] (c6-1) to (c5-1);

\end{tikzpicture} 
\caption{Left: The normal direction lies in the negative normal bundle to the critical set, causing an increase in the Morse index. Right: The normal direction is in the positive normal bundle of the critical set.}
\label{fig-anomalous-distinction}
\end{figure}

Since $\partial M$ has real codimension 2 in $\overline{M}$ (as follows from the cut construction), the difference in the Morse indices of boundary critical sets is precisely 2. Hence, the contributions from boundary terms in the right hand side of equation \eqref{eqn-poincare-difference} automatically cancel. On the other hand, for every ordinary critical set $C$ the right hand side of equation \eqref{eqn-poincare-difference} will contain a term of the form
\begin{equation}
  P^T_t(C) - t^2 P^T_t(C \cap (\partial M) ),\nonumber
\end{equation}
which by the same argument as in the proof of Proposition \ref{prop-a}, is equal to 
\begin{equation}
  P^T_t(C \setminus (C \cap \partial M)) = P^T_t(C \cap M),\nonumber
\end{equation} that is, equal to the equivariant Poincar\'e series of the uncompactified critical set. Consequently, the only possible contribution from critical sets at infinity come from the anomalous types. The goal of the remainder of this section is to prove the following.

\begin{lemma} \label{main-lemma}
For a sufficiently large cut parameter $\Lambda$, there are no anomalous critical sets.
\end{lemma}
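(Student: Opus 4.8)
The plan is to convert the non-existence of anomalous critical sets into a sign computation for the normal Hessian of $f := \normsq{\mu_T - \alpha}$ along $\partial M$, and then to control that sign by sending the cut parameter to infinity. Throughout I write $M = T^\ast Gr(r,n)$, let $\nu$ be the moment map for the fibre-scaling circle $(x,y)\mapsto(x,sy)$ that makes $M$ circle compact (so $\nu = \normsq{y}$ and the zero section is $\{\nu=0\}$), and write $\xi^\ast$ for the vector field generated by $\xi\in\ft$ and $\eta^\ast$ for the generator of the fibre-scaling circle. Two structural facts make the bookkeeping clean. First, since $\partial M$ is $T$-invariant and, by Kirwan's gradient identity \cite{Kirwan}, $\grad f$ is a constant multiple of $J\xi^\ast_{\mu_T-\alpha}$ (the vector field generated by the value $\mu_T-\alpha$, rotated by the complex structure $J$), the gradient of $f$ is automatically tangent to $\partial M$; hence a point of $\partial M$ is critical for $f|_{\partial M}$ iff it is critical for $f$ on $\overline M$, and at such a point $df$ vanishes in \emph{every} direction, so only second-order normal data matters. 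Second, in the local model of the symplectic cut the normal direction is a complex line $\BC_z$ on which the residual circle acts with weight one and in which $\normsq{z} = \Lambda - \nu$, so that moving from $\partial M$ into $M$ is exactly decreasing $\nu$; because $f$ is invariant under the residual circle, its Hessian on $\BC_z$ is a real scalar $\lambda$ times the identity. The trichotomy of the preceding discussion is then precisely $\lambda = 0$ (ordinary), $\lambda < 0$ (boundary, index jumps by $2$), $\lambda > 0$ (anomalous), so the Lemma is equivalent to the assertion that $\lambda < 0$ at every boundary-confined critical set once $\Lambda \gg 0$.

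The next step is to make $\lambda$ computable. Setting $\Phi := \langle \grad f, \grad\nu\rangle$ and using $\grad\nu = J\eta^\ast$ together with the gradient identity gives $\Phi = \mathrm{const}\cdot\langle \xi^\ast_{\mu_T-\alpha}, \eta^\ast\rangle$, which on the linear cover $T^\ast\Rep(Q)$ evaluates to a constant multiple of $-\sum_i(\normsq{x_i}-\normsq{y_i}-\alpha_i)\normsq{y_i} = -\sum_i(\mu_i - \alpha_i)\normsq{y_i}$. Since $df=0$ at a boundary critical point, the standard identity $\mathrm{Hess}\,f(\grad\nu,\grad\nu) = \grad\nu(\Phi)$ holds there, so $\lambda = \langle \grad\Phi, \grad\nu\rangle / \normsq{\grad\nu}$ at the critical point. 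Thus everything reduces to the sign of the $\grad\nu$-derivative of the explicit quadratic expression $\Phi$, evaluated at points satisfying the criticality condition which, after applying $J$, reads $\xi^\ast_{\mu_T-\alpha}\in\mathrm{span}_\BR(\eta^\ast, J\eta^\ast)$, i.e. the $T$-Killing field of the critical value is forced into the complexified circle-orbit direction.

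It is useful to run the estimate in parallel dynamically, which is how I expect the intuition in the paper (\emph{anomalous sets are limits of trajectories escaping to infinity}) is made precise. Along the downward flow one has $\tfrac{d}{dt}\nu = -\Phi$, so a trajectory escapes to $\nu \to +\infty$ only where $\Phi < 0$; showing $\Phi \geq 0$ on a neighbourhood of infinity simultaneously rules out escaping trajectories and, through the formula $\lambda = \grad\nu(\Phi)/\normsq{\grad\nu}$, feeds into the conormal Hessian. I would therefore aim to prove: for $\Lambda$ sufficiently large, $\Phi \geq 0$ on $\{\nu = \Lambda\}$ and $\lambda < 0$ at every critical point there. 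Concretely $\Phi \geq 0$ is the inequality $\sum_i \normsq{x_i}\,\normsq{y_i} \leq \sum_i \normsq{y_i}^2$ (up to the lower-order $\alpha$-term), and this is exactly where the defining constraints of the hyperk\"ahler quotient $M = T^\ast\Rep(Q)\rred U(r)$, namely $(xx^\ast - y^\ast y)_0 = 0$ and $(xy)_0 = 0$, must be brought in.

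The main obstacle is precisely this asymptotic sign analysis, and it is genuinely delicate because $f$ is \emph{not} proper in the fibre direction: the $U(r)$-reduction fixes $\normsq{x} - \normsq{y}$ (the scale of the Grassmannian), so escaping in the fibres forces $\normsq{x}$ and $\normsq{y}$ to grow together and $\mu_T$ can remain bounded while $\nu\to\infty$. Hence real critical configurations do sit at infinity, and in the "balanced" regime where some $\normsq{x_i}$ are comparable to $\normsq{y_i}$ the individual summands of $\Phi$ can carry the wrong sign; one cannot simply read off $\Phi\geq 0$ or $\lambda < 0$ pointwise. The crux is to use the two moment-map constraints — together with the criticality condition $\xi^\ast_{\mu_T-\alpha}\in\BC\eta^\ast$, which pins down the $T$-orbit type of a critical point — to exclude or control these balanced configurations and to extract a uniformly negative $\lambda$ for $\Lambda\gg 0$. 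A secondary technical point is to carry the computation honestly from the linear cover $T^\ast\Rep(Q)$ down to $M = T^\ast Gr(r,n)$ and to tolerate the orbifold points that the cut may introduce; but the heart of the proof is the constrained inequality controlling $\Phi$ and $\lambda$ at genuine infinity.
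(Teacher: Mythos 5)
Your reduction of the lemma to a sign condition at the boundary-confined critical sets is sound in spirit and runs parallel to the paper's own reduction: the paper encodes the same dichotomy through the weight $\nu$ of the residual circle acting on the normal line $\BC_z$ (anomalous precisely when $\nu>0$), and your $\Phi=\langle\nabla f,\nabla\nu\rangle\propto\langle\xi^\ast_{\mu_T-\alpha},\eta^\ast\rangle$ is, at a critical point where $\xi^\ast_{\mu_T-\alpha}=c\,\eta^\ast$, just $c\,\|\eta^\ast\|^2$, i.e.\ that same weight up to a positive factor. But there is a technical slip in your Hessian bookkeeping. At a critical set confined to $\partial M$ the differential of $f$ vanishes on all of $T\overline{M}$, but its lift to $M$ does \emph{not} vanish in the $\nabla\nu$-direction: $df(\nabla\nu)=\Phi$ is exactly the quantity $c\,\|\eta^\ast\|^2$ and is nonzero precisely on the boundary and anomalous sets. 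Because $|z|^2=\Lambda-\nu$ is quadratic in the normal coordinate, the normal Hessian eigenvalue on $\overline{M}$ is $-2\Phi/\|\nabla\nu\|^2$, a \emph{first}-order quantity, not $\nabla\nu(\Phi)/\|\nabla\nu\|^2$; your identity $\mathrm{Hess}\,f(\nabla\nu,\nabla\nu)=\nabla\nu(\Phi)$ presupposes $\Phi=0$, which holds only in the ordinary or degenerate case. This actually simplifies your task (no second derivatives of $\Phi$ are needed), but it also shows your apparatus collapses back to a single question: determine the sign of $\langle\xi^\ast_{\mu_T-\alpha},\eta^\ast\rangle$ at the critical sets for $\Lambda\gg0$.

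That sign determination is the entire content of the lemma, and you leave it open. You name it as ``the crux'' and propose in its place the pointwise inequality $\Phi\ge0$ on all of $\{\nu=\Lambda\}$, which is both stronger than what is needed and not obviously true; you give no argument for it, and you correctly observe yourself that the balanced configurations with $\normsq{x_i}$ comparable to $\normsq{y_i}$ are not controlled by any termwise estimate. The paper's proof lives entirely in this step: criticality forces the point to be fixed by a subtorus $T_\beta$, the weights $(\sigma,\rho,\nu)$ decompose the star quiver into chains, and summing the real moment map equations along each chain produces a linear relation $A\sigma+B\nu\sim0$ and then $\delta'\nu=C(\alpha)+\Lambda$ with $\delta'<0$, which forces $\nu<0$ once $\Lambda$ dominates $C(\alpha)$. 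Your criticality condition $\xi^\ast_{\mu_T-\alpha}\in\BC\,\eta^\ast$ is exactly the input that makes this fixed-point, quiver-decomposition analysis available, but you never exploit it. As it stands the proposal is a (partly misassembled) framework for the proof, not a proof.
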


Before we continue, let us give an explicit description of the cut compatficitation $\overline{M}$. It consists of all equivalence classes $[x, y, z]$, with $x$ an $r \times n$ matrix, $y$ an $n \times r$ matrix, and $z$ a complex number, $[x, y, z] \sim [g x, syg^{-1}, sz]$ for $(g,s) \in U(r) \times S^1$, subject to the moment map equations
\begin{align}
  x x^* - y^* y &= (\alpha_{[n]} / r) \mathbf{1}_r \nonumber\\
  xy &= 0 \nonumber\\
  \normsq{y} + \normsq{z} &= \Lambda,\nonumber
\end{align}
where $\Lambda > 0$ is the cut parameter.

Now recall from \cite{Kirwan} that the components of the critical set of $\norm{\mu}^2$ for a moment map $\mu$ on a symplectic manifold $M$ associated to the action of a torus $T$ are all of the form
\begin{equation}
  C_\beta = Z_\beta \cap M^{T_\beta},\nonumber
\end{equation}
where $Z_\beta := \mu^{-1}(\beta)$ and $T_\beta$ is the closure in $T$ of $\{\exp(t\beta) \ | \ t \in \BR\}$. Let us return to the specific case of the $T$-action on $\overline{M}$. Suppose that $K \subset (S^1)^n$ is a sub-torus of the standard maximal torus $(S^1)^n \subset U(n)$. If we have $[x,y,z] \in \overline{M}^K$, then there exists a homomorphism $(g,h):K \to U(r) \times S^1$ such that
\begin{align}
  xk &= g(k) x \nonumber\\
  k^{-1}y &= h(k) y g(k)^{-1} \nonumber\\
  z &= h(k) z \nonumber
\end{align}
Working up to conjugation in $U(r)$, we can assume $g(k)$ takes the normal form
\begin{equation}
  \begin{bmatrix}
    k^{\sigma_1} \mathbf{1}_{i_1} &  & 0\nonumber \\
     & \ddots & \nonumber \\
    0 &  & k^{\sigma_d} \mathbf{1}_{i_d}\nonumber
  \end{bmatrix}
\end{equation}
where $\sigma_1, \dots, \sigma_d$ are distinct weights of $K$ and $\mathbf{1}_i$ denotes the $i \times i$ identity matrix.  Without loss of generality, assume that $i_1 \leq \dots \leq i_d$. This picks out a partition $\lambda$ of $r$, where $r = i_1 + \dots + i_d$. Let $\rho_1, \dots, \rho_n$ denote the weights of $K \into (S^1)^n$. Under the action
\begin{equation}
  k \cdot (x, y, z) = ( g(k)^{-1} x k, h(k) k^{-1} y g(k), h(k) z)\nonumber
\end{equation}
we find that
\begin{align}
  x_{ij} &\mapsto k^{-\sigma_i+\rho_j} x_{ij}\nonumber \\
  y_{ji} &\mapsto k^{\nu+\sigma_i-\rho_j} y_{ji} \nonumber\\
  z &\mapsto k^\nu z\nonumber
\end{align}
where $h(k) = k^\nu$, and $x_{ij}$ and $y_{ji}$ denote components of the matrices $x$ and $y$.

\begin{proof}[Proof of Lemma \ref{main-lemma}] Consider a subtorus $K = T_\beta$ given by the weights $\beta=(\sigma, \rho, \nu)$ as above. The corresponding critical set is anomalous precisely when $\nu > 0$, so we must show that $\nu\leq0$. We can assume that $\nu \neq 0$, since $\nu=0$ corresponds to an ordinary critical set. In this case, we find immediately that $z=0$. The weights $(\sigma, \rho)$ decompose the quiver pictured in Figure \ref{fig-polygon-quiver} as follows. The central vertex (representing $\BC^r$) splits up into $\ell$ vertices, each labelled by the weight $\sigma_i$ and of dimension $d_i$, and the valent vertices are labelled by the weights $\rho_j$. We draw an arrow from $\rho_j$ to $\sigma_i$, corresponding to a non-zero block of $x$, if and only if $\sigma_i=\rho_j$. Similarly, we draw an arrow from $\sigma_i$ to $\rho_j$, representing a non-zero block of $y$, if and only if $\rho_j=\sigma_i+\nu$.
A typical quiver representing a critical set is pictured in Figure \ref{fig-anomalous}.

\begin{figure}[h]
\centering
\begin{tikzpicture}[auto]

  \coordinate (dx) at (0.5, 0);
  \coordinate (dy) at (0, 0.7);

  \node[fill=black, ellipse, scale=0.2] (1) at (0, 1) {};
  \node[fill=black, ellipse, scale=0.2] (2) at (2, 1) {};
  \node[fill=black, ellipse, scale=0.2] (3) at (4, 1) {};

  \node[fill=black, ellipse, scale=0.2] (4) at (0, -1) {};
  \node[fill=black, ellipse, scale=0.2] (5) at (2, -1) {};
  \node[fill=black, ellipse, scale=0.2] (6) at (4, -1) {};

  \foreach \n in {1,2,3, 4, 5, 6}
  {
    \node[fill=black, ellipse, scale=0.2] (n1) at ( $(\n)+(dx)+(dy)$ ) {};    
    \node[fill=black, ellipse, scale=0.2] (n2) at ( $(\n)-(dx)+(dy)$ ) {};    
    \node[fill=black, ellipse, scale=0.2] (n3) at ( $(\n)-(dx)-(dy)$ ) {};    
    \node[fill=black, ellipse, scale=0.2] (n4) at ( $(\n)+(dx)-(dy)$ ) {};    
    \path[-stealth] (n1) edge (\n);
    \path[-stealth] (n2) edge (\n);
    \path[-stealth, dashed] (\n) edge (n3);
    \path[-stealth, dashed] (\n) edge (n4);
  };

  \foreach \n in {1,2, 5}
  {
    \node[fill=black, ellipse, scale=0.2] (u) at ( $(\n)+(1,0)$ ) {};    
    \node[fill=black, ellipse, scale=0.2] (u2) at ( $(\n)+(2,0)$ ) {};    

    \path[-stealth, dashed] (\n) edge (u);
    \path[-stealth] (u) edge (u2);
  };

\end{tikzpicture}
\caption{A typical decomposition of a star quiver.}
\label{fig-anomalous}
\end{figure}
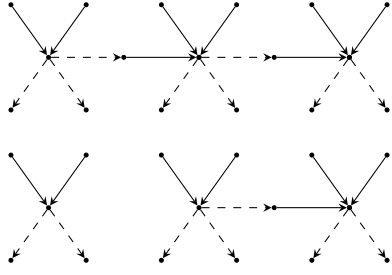

Let us consider a single connected component of the decomposed quiver. The horizontal arrows correspond to maps $\sigma_i \to \rho_j \to \sigma_k$, in which the first arrow corresponds to a block of $y$ and the second to a block of $x$. The condition on the blocks of $y$ implies that $\rho_j=\sigma_i+\nu$, and the condition on the blocks of $x$ implies that $\sigma_k=\rho_j$. Hence we find that $\sigma_k=\sigma_i+\nu$. Therefore a connected component of the decomposed quiver can be labelled by a maximal subset of $\set{\sigma_1, \dots, \sigma_d}$ of the form $\set{\sigma, \sigma+\nu,\dots,\sigma+m\nu}$ ($m$ is allowed to be $0$). At each node of the chain, label the arrows as in Figure \ref{fig-quiver-induction}. The real moment map at each node is
\begin{equation}
  x_i x_i^\ast + u_i u_i^\ast - y_i^\ast y_i - v_i^\ast v_i = (\alpha / r) \mathbf{1}_{d_i},\nonumber
\end{equation}
while the conditions that $\mu_T=\beta$ lie in the stabilizer of $T_\beta$ imply
\begin{align}
  \normsq{u_i} - \normsq{v_{i-1}} &= \alpha_i + (n_i^u)(\sigma + i \nu) \nonumber\\
  \normsq{x_i}  &= \alpha_i+ (n_i^x)(\sigma + i \nu) \nonumber\\
  -\normsq{y_i} &= \alpha_i + (n_i^y)(\sigma + (i+1) \nu),\nonumber
\end{align}
where $n_i^x, n_i^u$, and $n_i^y$ denote the sizes of the blocks (i.e. $x_i$ is $d_i \times n_i^x$, $y_i$ is $n_i^y \times d_i$, and so on).
Using the first equation, we have
\begin{align}
  (d_i/r)\alpha &= \sum_i (\normsq{x_i}+\normsq{u_i}-\normsq{y_i}-\normsq{v_i})\nonumber \\
    &= \sum_i (\normsq{x_i}-\normsq{y_i}+\normsq{u_i}-\normsq{v_{i-1}})\nonumber \\
    &\sim \left(\sum_i (n_i^x+n_i^u+n_i^y)\right) \sigma + \left(\sum_i (i n_i^x+(i+1)n_i^y+in_i^u)\right)\nonumber, \nu
\end{align}
where $\sim$ denotes equality up to terms linear in $\alpha$.  From this, we find $A \sigma + B \nu \sim 0$, where
\begin{align}
  A &= \sum_i (n_i^x+n_i^y+n_i^y),\nonumber \\
  B &= \sum_i (in_i^x+(i+1) n_i^y + i n_i^u).\nonumber
\end{align}

\begin{figure}[h]
\centering
\begin{tikzpicture}[auto]

  \node[fill=black, ellipse, scale=0.3] (s) at (0,0) {};
  \node[fill=black, ellipse, scale=0.2] (x) at (0,1.5) {};
  \node[fill=black, ellipse, scale=0.2] (y) at (0,-1.5) {};
  \node[fill=black, ellipse, scale=0.2] (u) at (-1.5,0) {};
  \node[fill=black, ellipse, scale=0.2] (v) at (1.5,0) {};
  \node[fill=black, ellipse, scale=0.2] (u1) at (3,0) {};
  \node[fill=black, ellipse, scale=0.2] (v1) at (-3,0) {};

  \path[-stealth] (x) edge node [right] {$x_i$} (s);
  \path[-stealth] (u) edge node [above] {$u_i$} (s);
  \path[-stealth,dashed] (s) edge node [left] {$y_i$} (y);
  \path[-stealth,dashed] (s) edge node [below] {$v_i$} (v);

  \path[-stealth] (v) edge node [below] {$u_{i+1}$} (u1);
  \path[-stealth,dashed] (v1) edge node [above] {$v_{i-1}$} (u);

\end{tikzpicture}
\caption{A single node in a chain of a decomposed quiver.}
\label{fig-quiver-induction}
\end{figure}
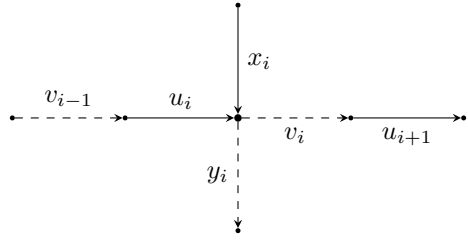

Furthermore, we can use the equations above to solve for $\normsq{v_i}$ in terms of $\normsq{v_{i-1}}$, which allows us to compute the $\normsq{v_i}$ inductively. We find
\begin{equation}
  \normsq{v_i} \sim \sum_{j=0}^i (A_j \sigma + B_j \nu),\nonumber
\end{equation}
where $A_i = n_i^x+n_i^y+n_i^u$ and $B_i=i n_i^x+(i+1)n_i^y+in_i^u$. Hence
\begin{align}
  \sum_i(\normsq{y_i} + \normsq{v_i})
  &\sim \sum_{i=0}^N \left(-n_i^y(\sigma + (i+1) \nu) + \sum_{j=0}^i (A_i \sigma + B_j \nu)\right) \nonumber\\
  &\sim -\sum_{i=0}^N n_i^y(\sigma + (i+1) \nu) + \sum_{i=0}^N (N+1-i)(A_i \sigma + B_i \nu), \nonumber\\
  &\sim C \sigma + D \nu,\nonumber
\end{align}
where
\begin{align}
  C &\sim \sum_{i=0}^N \left( (N+1-i)(n_i^x+n_i^u) + (N-i) n_i^y \right) \nonumber\\
  D &\sim \sum_{i=0}^N \left( (N+1-i)(in_i^x+in_i^u) + (N-i)(i+1) n_i^y \right). \nonumber
\end{align}
Now, using $A \sigma + B \nu \sim 0$, we find that
\begin{equation}
  \normsq{y} + \normsq{v} - \nu \sim C \sigma + (D-1) \nu \sim \left(\frac{AD-BC-A}{A} \right) \nu.\label{eqn-normsquares}
\end{equation}
It is an easy exercise, which we leave to the reader, to verify that $AD < BC + A$. Boiling down \eqref{eqn-normsquares}, we have $\sum_i(\normsq{y_i}+\normsq{v_i}) \sim \delta \nu$, with $\delta < 0$. The condition $\mu = \beta$ additionally implies that the sum of $\normsq{y}+\normsq{v}$, taken over all components of the decomposed quiver, is equal to $\Lambda + \nu$. Hence by the preceding argument we obtain an equation of the form
\begin{equation}
  \delta' \nu = C(\alpha) + \Lambda,\nonumber
\end{equation}
where $\delta' < 0$ and $C(\alpha)$ is a constant depending only on $\alpha$ and the decomposition of the quiver.  For $\Lambda$ sufficiently large, the right side is positive, and so $\nu < 0$. Thus, the critical set cannot be anomalous.
\end{proof}

\subsection{Betti Numbers} \label{sec-betti}

In the preceding section, we established that the Poincar\'e polynomial of $\CG^r_n(\alpha)$, and hence of $\CX^r_n(\alpha)$, does not contain any contributions from points at infinity of the compactification. It remains, however, to enumerate the ordinary, uncompactified critical sets, as well as to compute their Morse indices and equivariant Poincar\'e series. The calculations in this section are standard (see Atiyah-Bott \cite{YangMillsRiemannSurface}, and, especially, Harada-Wilkin \cite{HaradaWilkin}), so we will just sketch the main ideas, leaving the details as an exercise to the reader.

\begin{proposition} \label{prop-ordinary-poincare}
The ordinary critical sets of $\normsq{\mu_T}$ are indexed by the choice of a partition $\lambda$ of $r$, together with disjoint subsets $S_1, \dots, S_{\ell(\lambda)} \subseteq [n]$ labeled by the parts of $\lambda$. For such a critical set $C$, it must be that
\begin{equation}
  P^T_t(C) = (1-t^2)^{-s(\lambda, \rho)} \prod_{j=1}^{\ell(\lambda)} P_t(\CX^{r_j}_{n_j}),\nonumber
\end{equation}
where $r = r_1 + \dots + r_{\ell(\lambda)}$ is the partition, $n_i = \# S_i$, and
\begin{equation}
  s(\lambda, \rho) = \ell(\lambda) + n - 1 - \sum_i n_i.\nonumber
\end{equation}
\end{proposition}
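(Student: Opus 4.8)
The plan is to apply Kirwan's description of the critical sets of the norm-square of a moment map \cite{Kirwan} to $f = \normsq{\mu_T - \alpha}$ on $M = T^\ast Gr(r,n)$, and to feed in the weight analysis carried out just above the statement. Recall that the critical components have the form $C_\beta = Z_\beta \cap M^{T_\beta}$, where $Z_\beta = \mu_T^{-1}(\alpha + \beta)$ is the level set reflecting the shift by the reduction level $\alpha$, and $T_\beta$ is the subtorus generated by $\beta$. For an \emph{ordinary} critical set we have $\nu = 0$, so the normal form for $g(k)$ splits the central $\BC^r$ into blocks of sizes $r_1, \dots, r_{\ell}$ carrying distinct weights $\sigma_1, \dots, \sigma_\ell$, which is exactly a partition $\lambda \vdash r$ with $\ell = \ell(\lambda)$ parts. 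With $\nu = 0$ the arrow conditions $\rho_j = \sigma_i$ (governing both the $x$- and $y$-blocks) attach each external vertex $j$ to at most one block, namely the unique $i$ with $\rho_j = \sigma_i$. Setting $S_i = \set{j : \rho_j = \sigma_i}$ produces the disjoint subsets of the statement, with $n_i = \# S_i$; conversely every such pair $(\lambda, \set{S_i})$ is realised by distinct weights $\sigma_i$ and $\nu = 0$. This establishes the claimed indexing.

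Next I would pin down the fixed-point set. An external vertex $j$ in none of the $S_i$ has $\rho_j \neq \sigma_i$ for all $i$, forcing $x_j = y_j = 0$ on $M^{T_\beta}$, so such a vertex is frozen at the origin (its moment map value is simply $-\alpha_j$) and contributes only a fixed point; there are $n - \sum_i n_i$ of these. The centraliser in $U(r)$ of the block-scalar $g(k)$ is $\prod_i U(r_i)$, and the matched data in block $i$ is precisely a representation of the star sub-quiver with central label $r_i$ and legs $S_i$. Hence
\begin{equation}
  M^{T_\beta} \iso \prod_{i=1}^{\ell} T^\ast Gr(r_i, n_i),\nonumber
\end{equation}
with the unmatched vertices frozen at the origin, and $C_\beta$ is the locus in this product cut out by $\mu_T = \alpha + \beta$.

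The equivariant Poincaré series then follows by decomposing the $T$-action ($\dim T = n-1$) according to
\begin{equation}
  n - 1 = \Big(\textstyle\sum_i n_i - \ell\Big) + \big(n - \textstyle\sum_i n_i\big) + (\ell - 1).\nonumber
\end{equation}
The first summand is the subtorus that reduces each factor $T^\ast Gr(r_i, n_i)$ by its own $(S^1)^{n_i}/\mathrm{diag}$ at the induced level, giving $\prod_i \CG^{r_i}_{n_i}$; the complementary subtorus $T'$, of dimension $(n - \sum_i n_i) + (\ell - 1) = s(\lambda,\rho)$, acts trivially on $C_\beta$ — the $n - \sum_i n_i$ unmatched circles act on frozen points, while the $\ell - 1$ residual block-diagonal scalings (taken modulo the overall diagonal) preserve each matched block. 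Splitting off the classifying space of $T'$ yields
\begin{equation}
  P^T_t(C_\beta) = (1-t^2)^{-s(\lambda,\rho)} \prod_{i=1}^{\ell} P_t(\CG^{r_i}_{n_i}),\nonumber
\end{equation}
and Theorem \ref{thm-homotopy} replaces each $P_t(\CG^{r_i}_{n_i})$ by $P_t(\CX^{r_i}_{n_i})$, which is the assertion.

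The main obstacle is the middle step: identifying the free quotient of $C_\beta$ by the reducing subtorus with the honest product $\prod_i \CG^{r_i}_{n_i}$, and verifying that $T'$ contributes a clean factor $(1-t^2)^{-s}$. Two points need care. First, the constraint $(xy)_0 = 0$ built into $T^\ast Gr(r,n)$ forces the scalar $\mathrm{tr}(xy)/r$ to be common to all blocks, so the product description is only literally correct after accounting for this shared scalar; I expect this to be exactly what renders the $\ell-1$ block-diagonal directions inert, but it must be reconciled with the dimension count rather than assumed. Second, one must check that the induced length vectors on the sub-quivers are generic, so each reduced factor is smooth and Theorem \ref{thm-homotopy} applies; here Nakajima's independence of the diffeomorphism type on $\alpha$ \cite{Nakajima94} lets us suppress the induced levels and write simply $P_t(\CX^{r_i}_{n_i})$. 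The splitting $H^\ast_T(C_\beta) \iso H^\ast(BT') \otimes H^\ast_{T/T'}(C_\beta)$ is standard for a trivially-acting subtorus of a torus (compare \cite{YangMillsRiemannSurface, HaradaWilkin}), so once the product identification is secured the computation closes.
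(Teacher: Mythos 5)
Your argument is correct and is essentially the paper's own proof (which defers the details to Atiyah--Bott and Harada--Wilkin): the same quiver decomposition by the weights $(\sigma,\rho)$ with $\nu=0$, the same identification of the trivially-acting stabilizer $T_\beta$ of dimension $s(\lambda,\rho)$ contributing the factor $(1-t^2)^{-s(\lambda,\rho)}$, and the same identification of the quotient by the complementary torus $K_\beta$ with a product of smaller reduced spaces, converted to $\prod_j P_t(\CX^{\lambda_j}_{\rho_j})$ via Theorem \ref{thm-homotopy} -- a step you make more explicit than the paper does. The ``shared scalar'' obstacle you flag is in fact moot: $T^\ast Gr(r,n)$ is the hyperk\"ahler quotient of $T^\ast\Rep(Q)$ by the full $U(r)$ at complex level $0$ (see the explicit equations $xy=0$ for $\overline{M}$ in \S\ref{sec-morse-theory}), so the constraint decomposes block-diagonally and $M^{T_\beta}$ really is the product of the $T^\ast Gr(r_i,n_i)$ together with the frozen legs.
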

\begin{proof}
This essentially follows by an identical argument to that in Atiyah-Bott \cite[Proposition 7.12]{YangMillsRiemannSurface} and, in the specific context of quiver representations, that of Harada-Wilkin \cite[Proposition 6.10]{HaradaWilkin}. As in the proof of Lemma \ref{main-lemma} the weights $\beta=(\sigma, \rho)$ determine a decomposition of the quiver $Q$ into a quiver $Q_\beta = \sqcup_{j=1}^{\ell(\lambda)} Q_j$, which is a disjoint union of smaller star quivers (see Figure \ref{fig-equivariant-decomposition}). The subset $S_j \subseteq[n]$ is determined by which arrows of $Q$ belong to the component $Q_j$. The representations of the decomposed quiver are, by definition, fixed under the action of $T_\beta$. So choose a complementary sub-torus $K_\beta$ of $T$ so that $T \iso T_\beta \times K_\beta$. The dimension of $T_\beta$ (modulo the diagonal scalars, which act trivially) is precisely $s(\lambda,\rho)$ as given above, and so we have
\begin{equation}
  P^T_t(C) = (1-t^2)^{-s(\lambda, \rho)} P^{K_\beta}_t(C) = (1-t^2)^{-s(\lambda, \rho)} P_t(C / K_\beta),\nonumber
\end{equation}
Furthermore, we have that $C$ is the subspace of $T^\ast \Rep(Q_\beta)$ defined by the vanishing of the residual moment map, and hence
\begin{equation}
  P_t(C / K_\beta) = P_t(T^\ast \Rep(Q_j) \rred G_j ) = \prod_{j=1}^{\ell(\lambda)} P_t( \CX^{r_j}_{n_j}),\nonumber
\end{equation}
\noindent where $G_j$ denotes the automorphisms of the component $Q_j$, i.e. $G_j = P( U(r_j) \times (S^1)^{n_j})$.

\begin{figure}[ht!]
\begin{tikzpicture}

  \node[draw, ellipse, scale=0.5] (Rsink-1) at (0,0) {$r_1$};
  \node[draw, ellipse, scale=0.5] (Rsink-2) at (3,0) {$r_2$};
  \node[draw, ellipse, scale=0.5] (Rsink-3) at (3,-3) {$r_{\ell(\lambda)}$};

  \foreach \n in {1, 2, 3}
  {

    \foreach \a in {45, 90, ..., 359}
    {
      \node[draw, ellipse, scale=0.5] (Rv) at ($(Rsink-\n)+(\a:1.00)$) {$1$};
      \path[-stealth] (Rv) edge (Rsink-\n);
      \path[-stealth, dashed] (Rsink-\n) edge[bend left=20] (Rv);
    };

    \node[scale=0.5] (Rd) at ($(Rsink-\n)+(0:1)$) {$\cdots$};
  };

  \node (n) at (0,-3) {$\cdots$};

\end{tikzpicture}
\caption{Decomposition of the quiver corresponding to a critical set.}
\label{fig-equivariant-decomposition}
\end{figure}
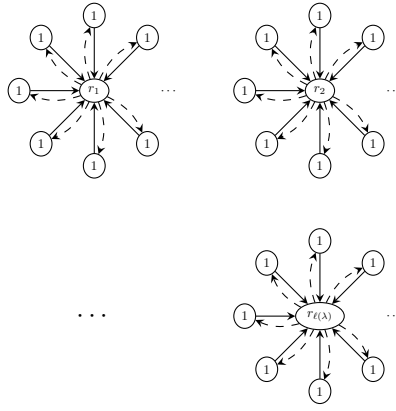

\end{proof}

\begin{proposition} \label{prop-ordinary-index}
Let $(\lambda, S_1, \dots, S_{\ell(\lambda)})$ index an ordinary critical set. Then the complex Morse index of $|\mu_T|^2$ along the critical set, both as a function on $\overline{M}$ as well as on $\partial M$, is equal to
\begin{equation}
  \beta(\lambda, \rho) = r(n-r) + \sum_j r_j(r_j-n_j).\nonumber
\end{equation}
\end{proposition}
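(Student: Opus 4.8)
The plan is to compute the index directly from Kirwan's description of the critical sets of $\normsq{\mu_T}$ \cite{Kirwan}, in the same spirit as the quiver-variety index calculations of Harada--Wilkin \cite{HaradaWilkin}. Along $C = C_\beta$ the negative normal bundle is the sum of those weight spaces of the $T_\beta$-action on the normal bundle on which the weight pairs negatively with $\beta$; since $M$ is K\"ahler this negative normal bundle is a complex subbundle, so the complex Morse index is simply its complex rank. The key simplification I would exploit is that for the \emph{linear} torus action on the vector space $T^\ast\Rep(Q)$ the pairing $\langle w, \beta\rangle$ is literally the numerical value of the weight $w$ evaluated on $\beta = (\sigma,\rho)$; moreover the tangent representation will turn out to be invariant under negating all weights, so the negative-weight count is insensitive to the sign convention and depends only on the combinatorial type $(\lambda,\rho)$.

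First I would realize $M = T^\ast Gr(r,n)$ as the hyperk\"ahler quotient $T^\ast\Rep(Q)\rred U(r)$, so that at a critical point $p$ the tangent space, as a virtual $T_\beta$-module, is
\begin{equation}
  [T_pM] = [T^\ast\Rep(Q)] - [\fgl_r] - [\fgl_r^\ast],\nonumber
\end{equation}
the two subtractions accounting respectively for the constraint $\muc = xy = 0$ and for quotienting by the $GL(r)$-orbit (freeness of the action and surjectivity of $d\muc$ guarantee these are genuine short exact sequences of $T_\beta$-modules, and the trace form identifies $\fgl_r^\ast\cong\fgl_r$ $T_\beta$-equivariantly). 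From the weight computation in the proof of Lemma \ref{main-lemma}, specialized to the ordinary case $\nu = 0$, the entry $x_{ab}$ carries weight $\rho_b - \sigma_a$, the entry $y_{ba}$ carries weight $\sigma_a - \rho_b$, and the $(a,a')$ entry of $\fgl_r$ carries weight $\sigma_a - \sigma_{a'}$, where $a,a'$ range over $[r]$ (with $\sigma_a$ the weight of the block of $\BC^r$ containing $a$) and $b$ ranges over the sources $[n]$.

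Next I would use the symmetry of these weights. The $x$- and $y$-weights are negatives of one another, and $\fgl_r\oplus\fgl_r^\ast$ is manifestly invariant under $w\mapsto -w$; hence, independently of whether one counts strictly negative or strictly positive weights, the complex index equals
\begin{equation}
  \#\{(a,b)\in[r]\times[n] : \rho_b \neq \sigma_a\} - \#\{(a,a')\in[r]\times[r] : \sigma_a \neq \sigma_{a'}\}.\nonumber
\end{equation}
It then remains to evaluate the two counts. Since $\rho_b = \sigma_a$ precisely when the source $b$ lies in the subset $S_j$ attached to the block $j$ containing $a$, the first set has complement of size $\sum_j r_j n_j$, giving $rn - \sum_j r_j n_j$; and $\sigma_a = \sigma_{a'}$ precisely when $a,a'$ lie in the same block, giving $r^2 - \sum_j r_j^2$ for the second. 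Subtracting yields $r(n-r) + \sum_j r_j(r_j - n_j) = \beta(\lambda,\rho)$; the vanishing of this expression on the trivial stratum $\lambda = (r)$, $S_1 = [n]$ is a reassuring check that this stratum is the absolute minimum.

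Finally, to see that the index is the same on $\overline M$ and on $\partial M$, I would observe that passing to the cut $\overline M$ adjoins the coordinate $z$ and one extra circle; on an ordinary stratum $\nu = 0$, so $z$ and the adjoined group directions are all $T_\beta$-fixed (weight $0$) and contribute nothing to the negative-weight count, while restricting to $\partial M$ removes only the weight-$0$ normal direction along $z$. In both cases the count above is untouched. I expect the only real obstacle to be the bookkeeping in the second step --- correctly pinning down $[T_pM]$ as a $T_\beta$-module, in particular the double subtraction $\fgl_r\oplus\fgl_r^\ast$, and justifying the negative-normal-bundle interpretation of the index via \cite{Kirwan} --- after which the weight symmetry collapses the entire computation to the two elementary counts.
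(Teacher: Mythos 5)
Your argument is correct and follows essentially the same route as the paper: Kirwan's negative-weight characterization of the index, the virtual $T_\beta$-module $[T^\ast\Rep(Q)] - [\fgl_r] - [\fgl_r^\ast]$ coming from the deformation complex, the weight-negation symmetry reducing the count to the non-zero weights of $[\Rep(Q)]-[\fgl_r]$, and the same two combinatorial counts $rn-\sum_j r_j n_j$ and $r^2-\sum_j r_j^2$. Your explicit handling of the $z$-direction (weight $\nu=0$ on ordinary strata) to equate the indices on $\overline{M}$ and $\partial M$ makes precise a point the paper leaves implicit.
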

\begin{proof} 
The stabilizing torus $T_\beta \subseteq T$ corresponding to $(\lambda, S_1, \dots, S_{\ell(\lambda)})$ takes the form 
\begin{equation}
  T_\beta = \{(s_1, \dots, s_\ell) \times (t_1, \dots, t_n) \in (S^1)^\ell \times (S^1)^n \suchthat t_j = s_i\ \textrm{for}\ j \in S_i \},\nonumber
\end{equation}
and the map $\phi: T_\beta \to U(r)$ determining the twisted action can be taken to be $\phi: s \times t \mapsto \diag(s_1, \dots, s_\ell)$. Recall that the complex Morse index of $\normsq{\mu}$ on a critical set indexed by $\beta$ is equal to the Morse index of $\mu^\beta$, and that the index of the latter is the number of weights, counted with multiplicity, in the isotropy representation of $T_\beta$ which pair negatively with $\beta$. To compute the isotropy representation, we represent the tangent space at $[x,y,z] \in \overline{M}$ by the complex
\begin{equation} \label{diagram-tangent-complex}
   \mathfrak{gl}_r \longrightarrow T^\ast \Rep(Q) \longrightarrow \mathfrak{gl}_r^\ast\nonumber
\end{equation}
\noindent whose first arrow is the differential of the action map and whose second arrow is the differential of the complex $GL(r)$-moment map. Since a fixed point corresponds to a homomorphism $\phi: T \to U(r)$, we can define a new twisted $T$-action on $\Rep(Q)$ by $t \ast x = t \phi^{-1} x$. The new action induces the same $T$-action on $M$. Moreover, with respect to this new action, the above complex is actually a of $T$ representations.  It follows that, in the representation ring of $T$, the isotropy representation is equal to
\begin{equation}
  [\Rep(Q)] + [\Rep(Q)]^\ast - [\mathfrak{gl}_r] - [\mathfrak{gl}_r]^\ast;\nonumber
\end{equation}
\noindent hence, the complex Morse index is simply the total number of non-zero weights in $[\Rep(Q)]-[\gl_r]$, counted with multiplicity. The $T_\beta$-action on $\gl_r$ is induced by the adjoint action, and therefore the number of non-zero weights is $r^2 - \sum_i r_i^2$. The representation on $\Rep(Q)$ is given by
\begin{equation}
  x_{kj} \mapsto s_i t_j^{-1} x_{kj}, k \in S_i,\nonumber
\end{equation}
\noindent from which the number of non-zero weights is given by $rn - \sum_i r_i n_i$. Taking this difference, we find the desired formula for the complex Morse index.
\end{proof}

Now we can prove the main theorem regarding the Betti numbers.

\begin{theorem}\label{main-thm-betti} Let $\alpha$ be generic. Then the Poincar\'e polynomial $P_t(\CX^r_n(\alpha))$ is independent of $\alpha$ and may be computed by the recursion relation
\begin{equation} \label{main-eqn-recursion}
  \frac{P_t(Gr(r,n))}{(1-t^2)^{n-1}} 
    = \sum_{\lambda} \frac{1}{m(\lambda)!} \sum_{\rho \geq \lambda} \frac{t^{2 \beta(\lambda, \rho)}}{(1-t^2)^{s(\lambda, \rho)}}
      {n \choose \rho }
      \prod_{j=1}^{\ell(\lambda)} P_t(\CX^{\lambda_j}_{\rho_j}).\nonumber\end{equation}
\end{theorem}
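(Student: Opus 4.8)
The plan is to read the identity directly off the equivariant Morse stratification assembled in \S\ref{sec-morse-theory}, after first recognizing the left-hand side as an equivariant Poincaré series. Since $T^\ast Gr(r,n)$ deformation retracts $T$-equivariantly onto its zero section $Gr(r,n)$ (the cotangent lift of the torus action scales the fibres linearly), and $Gr(r,n)$ has cohomology concentrated in even degrees and is therefore equivariantly formal, I would compute
\begin{equation}
  P_t^T(T^\ast Gr(r,n)) = P_t(Gr(r,n)) \cdot P_t(BT) = \frac{P_t(Gr(r,n))}{(1-t^2)^{n-1}},\nonumber
\end{equation}
using $T \cong (S^1)^{n-1}$ and $P_t(BT) = (1-t^2)^{-(n-1)}$. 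Thus the left-hand side of the recursion is exactly $P_t^T(T^\ast Gr(r,n))$.

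Next I would apply equivariant Morse theory with $\normsq{\mu_T}$. By Kirwan's theorem this norm-square is equivariantly perfect over $\BQ$, so $P_t^T(T^\ast Gr(r,n)) = \sum_C t^{2\lambda_C} P_t^T(C)$, summed over the critical sets $C$ with complex Morse indices $\lambda_C$. The point of \S\ref{sec-morse-theory} --- the compactification via $\overline{M}$, the automatic cancellation of boundary strata (real codimension $2$), and Lemma \ref{main-lemma} ruling out anomalous strata for $\Lambda \gg 0$ --- is precisely to license this identity on the noncompact $T^\ast Gr(r,n)$ with only the ordinary critical sets surviving. Feeding in Proposition \ref{prop-ordinary-index} for the index $\beta(\lambda,\rho)$ and Proposition \ref{prop-ordinary-poincare} for $P_t^T(C)$ produces the master identity
\begin{equation}
  \frac{P_t(Gr(r,n))}{(1-t^2)^{n-1}} = \sum_{(\lambda,\, S_1, \dots, S_{\ell(\lambda)})} \frac{t^{2\beta(\lambda,\rho)}}{(1-t^2)^{s(\lambda,\rho)}} \prod_{j=1}^{\ell(\lambda)} P_t(\CX^{\lambda_j}_{\rho_j}),\nonumber
\end{equation}
the sum ranging over partitions $\lambda$ of $r$ together with disjoint subsets $S_j \subseteq [n]$ of sizes $\rho_j$. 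The minimum stratum $\lambda = (r)$, $S_1 = [n]$ has $\beta = s = 0$ and contributes $P_t(\CX^r_n(\alpha)) = P_t(\CG^r_n(\alpha))$ (by Theorem \ref{thm-homotopy} and freeness of the $T$-action), which is what turns the identity into a recursion in the rank.

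It remains to reorganize the sum over labelled data into the stated form. The summand depends on the subsets $S_j$ only through their sizes $\rho_j$, and for a fixed size vector $\rho$ there are ${n \choose \rho}$ ways to pick disjoint subsets of those sizes; summing over ordered $\rho$ with this multinomial weight therefore eliminates the subset data. Finally, a critical set is an \emph{unordered} union of sub-star-quivers, so blocks of equal rank may be permuted without changing the stratum: ordered enumeration overcounts by $m(\lambda)!$, the product of the factorials of the multiplicities of the parts of $\lambda$, and dividing by $m(\lambda)!$ (legitimate because $\beta$, $s$, and the product are symmetric in equal-rank blocks) yields the right-hand side. I expect this final bookkeeping to be the only real obstacle --- matching the symmetry factor $m(\lambda)!$ against the multinomial count and fixing the lexicographic convention $\rho \geq \lambda$ so that every unordered critical set is counted with weight exactly one.
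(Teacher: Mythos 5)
Your proposal follows essentially the same route as the paper: equivariant Morse theory with $\normsq{\mu_T}$ on $T^\ast Gr(r,n)$, licensed by the compactification argument and the absence of anomalous strata from \S\ref{sec-morse-theory}, fed through Propositions \ref{prop-ordinary-poincare} and \ref{prop-ordinary-index}, and finished with the same $\binom{n}{\rho}$ versus $m(\lambda)!$ bookkeeping; your explicit identification of the left-hand side as $P_t^T(T^\ast Gr(r,n)) = P_t(Gr(r,n))/(1-t^2)^{n-1}$ and of the minimum stratum as the $P_t(\CX^r_n(\alpha))$ term that makes the identity a genuine recursion is a helpful clarification that the paper leaves implicit. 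The only point you omit is the claimed independence of $\alpha$, which the paper disposes of in one line by citing \cite[Corollary 4.2]{Nakajima94}.
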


\begin{proof} The independence of $\alpha$ is a direct consequence of \cite[Corollary 4.2]{Nakajima94}.
By equivariant Morse theory, together with the fact that there are no anomalous critical sets, we have
\begin{align}
  P_t(\overline{M} \red T) &= \frac{P_t^T(\overline{M})}{1-t^2} - \sum_{\text{ordinary}} t^{\lambda_C} P_t^{T \times S^1}(\overline{C}) - \sum_{\text{boundary}} t^{\lambda_C} P_t^{T \times S^1}(C), \nonumber\\
  P_t(\partial M \red T) &= \frac{P_t^T(\partial M)}{1-t^2} - \sum_{\text{ordinary}} t^{\lambda_C} P_t^{T \times S^1}(\partial \overline{C}) - \sum_{\text{boundary}} t^{\lambda_C-2} P_t^{T \times S^1}(C),\nonumber
\end{align}
Using the fact that $P_t(M \red T) = P_t(\overline{M} \red T) - t^2 P_t(\partial M \red T)$, we see that the terms involving boundary critical sets all cancel, and this yields for us
\begin{equation}
    P_t(M \red T) = P_t^T(M) - \sum_{\text{ordinary}} t^{\lambda_C} \left(P_t^{T \times S^1}(\overline{C}) - t^2 P_t^{T \times S^1}(\partial C) \right). \nonumber
\end{equation}
Furthermore, since $P_t^T(\overline{C}) - t^2 P_t^T(\partial C) = P_t^T(C)$, we obtain 
\begin{equation}
    P_t(M \red T) = P_t^T(M) - \sum_{\text{ordinary}} t^{\lambda_C} P_t^{T \times S^1}(C). \nonumber
\end{equation}
Using this, together with propositions \ref{prop-ordinary-index} and \ref{prop-ordinary-poincare} above, we obtain a recursion relation as a sum over partitions $\lambda$ and
disjoint subsets $S_1, \dots, S_{\ell(\lambda)}$. However, the contribution from each critical set depends only on the partition $\lambda$ and the tuples $\rho = (n_1, \dots, n_{\ell(\lambda)})$ of sizes of the subsets. The number of such subsets is exactly
\begin{equation}
  {n \choose \rho} := {n \choose n_1} {n-n_1 \choose n_2} \cdots {n-n_1-\dots-n_{\ell{\lambda}-1} \choose n_{\ell(\lambda)}}.
\end{equation}

Furthermore, the sum over \emph{all} partitions and subsets $S_1, \dots, S_{\ell(\lambda)}$ necessarily overcounts the critical sets. The reason is that if a part $r_i$ has some multiplicity, the corresponding subsets of $[n]$ are indistinguishable, and hence we should divide by a factor of $m(r_i)!$, where $m(r_i)$ denotes the multiplicity of the part. To account for this, we divide by an overall factor of $m(\lambda)!$, where 
\begin{equation}
  m(\lambda)! = \prod_{r_i\ \textrm{distinct}} r_i!\nonumber
\end{equation}
and thus we obtain the desired recursion relation.
\end{proof}

We illustrate the recursion relation in the following special case.
\begin{corollary} In the rank 2 case, we have
\begin{align}
  \begin{split}
  P_t(\CX^2_n) &= 
  \frac{1}{(1-t^2)^{n-1}} P_t(Gr(2,n))
     - \sum_{k=3}^{n-1} {n \choose k} \frac{t^{4(n-k)}}{(1-t^2)^{n-k}} P_t(\CX^2_k) \nonumber\\
     &- \frac{1}{2}\sum_{k_1=1}^{n-1} \sum_{k_2=1}^{n-k_1} 
       {n \choose k_1} {n-k_1 \choose k_2} \frac{t^{4n-4-2k_1-2k_2}}{(1-t^2)^{n+1-k_1-k_2}}.\nonumber
       \end{split}
\end{align}
\end{corollary}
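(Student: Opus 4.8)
The plan is to obtain the corollary as a direct specialization of the recursion relation of Theorem~\ref{main-thm-betti} to $r = 2$. The first step is to enumerate the partitions of $2$: there are exactly two, namely $\lambda = (2)$, for which $\ell(\lambda) = 1$ and $m(\lambda)! = 1$, and $\lambda = (1,1)$, for which $\ell(\lambda) = 2$ and $m(\lambda)! = 2$ (the part $1$ occurring with multiplicity two). The outer sum in the recursion therefore reduces to these two terms, and I would evaluate each separately.

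For $\lambda = (2)$ the tuple $\rho = (\rho_1)$ has a single entry, the product $\prod_j P_t(\CX^{\lambda_j}_{\rho_j})$ collapses to $P_t(\CX^2_{\rho_1})$, and ${n \choose \rho} = {n \choose \rho_1}$. Feeding the formulas of Propositions~\ref{prop-ordinary-index} and~\ref{prop-ordinary-poincare} into this term gives $\beta(\lambda,\rho) = 2n - 2\rho_1$ and $s(\lambda,\rho) = n - \rho_1$, so the $\lambda = (2)$ contribution is $\sum_{\rho_1} {n \choose \rho_1}\, \frac{t^{4(n-\rho_1)}}{(1-t^2)^{n-\rho_1}}\, P_t(\CX^2_{\rho_1})$. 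For $\lambda = (1,1)$ the tuple is $\rho = (k_1,k_2)$, with ${n \choose \rho} = {n \choose k_1}{n-k_1 \choose k_2}$; here one computes $\beta(\lambda,\rho) = 2n - 2 - k_1 - k_2$ and $s(\lambda,\rho) = n + 1 - k_1 - k_2$, so, including the prefactor $1/m(\lambda)! = 1/2$, this contribution is $\frac{1}{2}\sum_{k_1,k_2} {n \choose k_1}{n-k_1 \choose k_2}\, \frac{t^{4n-4-2k_1-2k_2}}{(1-t^2)^{n+1-k_1-k_2}}\, P_t(\CX^1_{k_1})P_t(\CX^1_{k_2})$.

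It remains to simplify the degenerate factors and to peel off $P_t(\CX^2_n)$. The rank-one hyperpolygon spaces $\CX^1_m$ are single points, so $P_t(\CX^1_{k_1}) = P_t(\CX^1_{k_2}) = 1$ and the product in the second contribution disappears from the formula. The spaces $\CX^2_k$ are empty for $k \leq 2$ (their complex dimension $2(k-3)$ would be negative), so those summands are zero and the first contribution begins effectively at $\rho_1 = 3$; the remaining admissible ranges $3 \leq \rho_1 \leq n$ and $1 \leq k_1 \leq n-1$, $1 \leq k_2 \leq n - k_1$ are exactly those on which the multinomial coefficients are nonzero and each part of $\lambda$ receives a nonempty set of legs. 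Finally, the single summand $\rho_1 = n$ of the $\lambda = (2)$ contribution has $\beta = 0$, $s = 0$ and ${n \choose n} = 1$, hence equals $P_t(\CX^2_n)$ itself; moving it to the left-hand side and solving for $P_t(\CX^2_n)$ yields precisely the asserted identity.

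I do not expect a genuine obstacle, since the corollary is a bookkeeping specialization of the already-established recursion. The only points requiring care are the correct evaluation of $\beta(\lambda,\rho)$ and $s(\lambda,\rho)$ for the two partitions, the recognition that the rank-one factors contribute trivially, and---most importantly---the observation that the term $\rho = (n)$ reproduces $P_t(\CX^2_n)$ and must be separated from the sum before solving.
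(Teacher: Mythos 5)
Your proposal is correct and follows exactly the route the paper intends: the corollary is stated as an illustration of Theorem \ref{main-thm-betti} with no separate proof, and your specialization to the two partitions $(2)$ and $(1,1)$, your evaluations $\beta((2),(\rho_1)) = 2n-2\rho_1$, $s = n-\rho_1$ and $\beta((1,1),(k_1,k_2)) = 2n-2-k_1-k_2$, $s = n+1-k_1-k_2$, and your handling of the degenerate factors (emptiness of $\CX^2_k$ for $k\le 2$, triviality of $P_t(\CX^1_m)$, and isolating the $\rho=(n)$ term as $P_t(\CX^2_n)$) all match the stated formula. No gaps.
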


\begin{remark} 
Konno calculated the rank 2 Poincar\'e polynomials using Morse theory with the $S^1$-action \cite{KonnoPolygon}. This method was generalized to higher ranks in the first author's PhD thesis, but for $r \geq 3$ the calculation is significantly more complicated. In particular, one obtains a complicated sum of terms that depend sensitively on the parameter $\alpha$, despite the fact that the sum itself is independent of $\alpha$. On the other hand, the recursion relation of Theorem \ref{main-thm-betti} may be used to compute the Poincar\'e polynomials for large rank and twist very quickly. 
\end{remark}

\begin{figure}[ht]
\begin{tabular}{ccc}
\includegraphics[width=6cm]{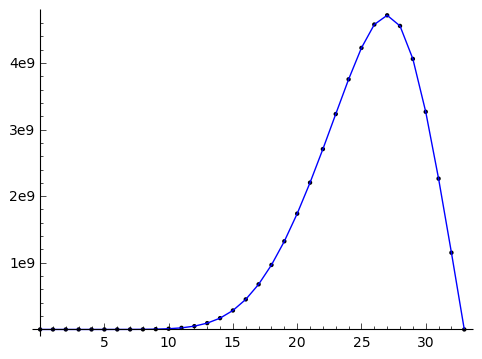} &
\includegraphics[width=6cm]{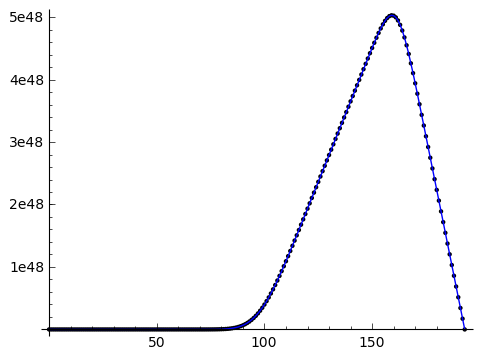} 
\end{tabular}
\caption{Typical plots of Betti numbers. Left: $r=3,\;n=20$. Right: $r=3,\;n=100$.  Data points have been joined to demonstrate their smooth appearance.}
\label{fig-betti-plot}
\end{figure}

\begin{remark} Hausel and Rodriguez-Villegas observed in \cite{HRV2013} the smooth appearance of plots of Betti numbers of some high-dimensional quiver varieties and other non-compact hyperk\"ahler manifolds, and proved that in several instances these converge (in an appropriate sense) to well-known probability distributions. This phenomenon seems also to occur in the case of hyperpolygon spaces, as exhibited in Figure \ref{fig-betti-plot}.
\end{remark}

%
%
%
%

\section{Hitchin Systems}

\subsection{Parabolic Higgs Bundles}

In this section, we will show that $\CX^r_n(\alpha)$ can be naturally viewed as a moduli space of parabolic Higgs bundles.\footnote{For general background, see \cite{Simpson90, AHH, BodenYokogawa, LogaresMartens}, etc.} Let $X$ be a compact connected Riemann surface and $E \to X$ a rank $r$ holomorphic vector bundle on it.  Use $K_X$ to denote the canonical line bundle of $X$.  Let $D = \sum_{i=1}^n p_i$ be a fixed divisor on $X$, and assume that no two $p_i$ are the same. 

\begin{definition}
A \emph{minimal parabolic structure} on $E$ is a choice at each $p_i$ of a partial flag $0 \subset L_i \subset E_{p_i}$, where $L_i$ is a line. A \emph{minimal parabolic Higgs field} is an $\CO_X$-linear map $\phi \in H^0(X, \Endo (E) \otimes K_X(D) )$, such that at each point $p_i$, the residue of $\phi_i$ of $\phi$ is strictly triangular with respect to the flag (i.e. $\phi(E_{p_i}) \subseteq L_i, \phi_i(L) = 0$).
\end{definition}


\noindent Note that if $\phi$ is strictly parabolic, then the  $\phi_i$ are nilpotent of order $2$, and have rank at most $1$, and we have that they lie in the closure of the minimal nilpotent orbit of $\mathfrak{sl}_r$.

Now consider $[x,y] \in \CX^r_n(\alpha)$ represented by $(x,y)$. We can define a Higgs field on $\BP^1$ by setting 
\begin{equation} \label{eqn-integrable-higgs-defn}
  \phi(z) = \sum_{i=1}^n \frac{\phi_i dz}{z-p_i},\nonumber
\end{equation}
where $z$ is an affine coordinate on $\BP^1$ and where $\phi_i = x_i y_i$. The complex moment map condition $\sum_i x_i y_i =0$ ensures that $\phi$ is well-defined as a section of $\Endo(E) \otimes K(D)$, and moreover the complex moment map conditions $y_i x_i =0$ imply that $\phi_i$ is strictly triangular with respect to the flag $0 \subset \mathrm{span} \{x_i\} \subset \BC^r$. Different choices of representative $(x,y)$ differ by $g \in GL(r)$, and this modifies $\phi$ by $\phi \mapsto g \phi g^{-1}$. Hence the equivalence class $(E, \phi)$, where $E = \BC^r \times \BP^1$ with parabolic structure defined by the $x_i$, is well-defined independent of the choice of representative $(x,y)$. Moreover, by Proposition \ref{prop-nilpotent-resolution}, we see that the pair $(E, \phi)$ determines $[x,y] \in \CX^r_n(\alpha)$ uniquely. Summarizing, we have the following.


\begin{theorem}\label{thm-integrable-parabolic}
Let $D = \sum_{i=1}^n p_i$ be a divisor of $n$ distinct points on $\BP^1$.  Then the hyperpolygon space $\CX_n^r(\alpha)$ is naturally identified with a moduli space of parabolic Higgs bundles on $\BP^1$ (with respect to the divisor $D$), whose underlying bundle type is trivial,
and such that the residue of the Higgs field at each point in the divisor lies in the closure of the minimal nilpotent orbit of $\mathfrak{sl}_r$.
\end{theorem}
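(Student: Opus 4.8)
The plan is to establish a bijective correspondence between points of $\CX^r_n(\alpha)$ and isomorphism classes of the relevant parabolic Higgs bundles, and then verify that the construction is well-defined and invertible. Most of the ingredients have already been assembled in the discussion preceding the statement, so the proof is largely a matter of organizing them. First I would fix a point $[x,y] \in \CX^r_n(\alpha)$ represented by $(x,y) \in \muci(0) \cap \muri(\alpha)$, and recall the explicit construction: set $E = \BC^r \times \BP^1$ to be the trivial rank $r$ bundle, equip it at each $p_i$ with the flag $0 \subset \Span\{x_i\} \subset \BC^r$ (a genuine line provided $x_i \neq 0$, which I would note follows from the real moment map equation $\normsq{x_i} - \normsq{y_i} = \alpha_i$ together with the generic positivity of $\alpha$), and define the Higgs field $\phi = \sum_i \frac{x_i y_i \, dz}{z - p_i}$.

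Next I would check that $(E,\phi)$ is in fact a minimal parabolic Higgs bundle of the asserted type. There are three things to verify, each corresponding to one of the moment map conditions. The complex moment map equation $\sum_i x_i y_i = (xy)_0 = 0$ ensures $\phi$ has no pole at $z = \infty$ once the trace-free part vanishes, so that $\phi$ is a well-defined global section of $\End(E) \otimes K_{\BP^1}(D)$; here I would carry out the standard change of coordinates at infinity to confirm the residue sum condition is exactly what controls regularity at $\infty$. The complex moment map conditions $y_i x_i = 0$ force $\phi_i = x_i y_i$ to satisfy $\phi_i(\Span\{x_i\}) = x_i (y_i x_i) = 0$ and $\phi_i(E_{p_i}) \subseteq \Span\{x_i\}$, i.e. $\phi_i$ is strictly triangular with respect to the flag; and since $\phi_i$ has rank at most $1$ and squares to zero, it lies in $\Omincl$ by the description of $\Omincl$ given in Proposition \ref{prop-nilpotent-resolution} and its preamble.

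I would then address well-definedness and injectivity together. Two representatives $(x,y)$ and $(x',y')$ of the same point differ by the residual $GL(r)$ (equivalently the image of $G$ in $\GL(r)$) action $g \cdot (x,y) = (gx, yg^{-1})$, under which $\phi_i \mapsto g \phi_i g^{-1}$ and the flag $\Span\{x_i\}$ transforms covariantly; hence the isomorphism class $(E,\phi)$ is independent of the chosen representative and the map $[x,y] \mapsto (E,\phi)$ is well-defined. For injectivity and surjectivity onto the stated moduli space, I would invoke Proposition \ref{prop-nilpotent-resolution}: each residue $\phi_i \in \Omincl$ factors as $\phi_i = x_i y_i$ with $(x_i, y_i)$ unique up to the scaling $(x_i,y_i) \sim (\lambda x_i, \lambda^{-1} y_i)$ and with $y_i x_i = 0$ automatic, so recovering the quiver data $(x,y)$ from $(E,\phi)$ (after trivializing $E$, which is possible precisely because the bundle is assumed trivial) is exactly the content of the generically one-to-one statement there, now applied fiberwise at each marked point.

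The main obstacle I anticipate is not any single calculation but the matching of stability notions: one must confirm that the (semi)stability condition built into the hyperk\"ahler quotient $\CX^r_n(\alpha)$ — namely genericity of $\alpha$ guaranteeing a free $G$-action on $\muri(\alpha) \cap \muci(0)$, as in Nakajima's theorem quoted above — corresponds exactly to the appropriate notion of $\alpha$-(semi)stability for parabolic Higgs bundles with the prescribed parabolic weights. Making the dictionary between the length vector $\alpha$ and the parabolic weights precise, and verifying that the GIT/symplectic quotient genuinely parametrizes the \emph{stable} parabolic Higgs locus rather than some larger or smaller set, is where the real care is required; the rest reduces to the linear-algebraic bookkeeping sketched above.
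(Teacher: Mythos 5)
Your proposal is correct and follows essentially the same route as the paper, whose proof is precisely the discussion preceding the theorem: define $\phi_i = x_i y_i$, translate the complex moment map equations into regularity of $\phi$ and strict triangularity of the residues, check $GL(r)$-equivariance, and invoke Proposition \ref{prop-nilpotent-resolution} to recover $[x,y]$ from $(E,\phi)$. The stability-matching issue you flag as the main obstacle is deliberately left open by the authors --- the theorem is stated without specifying a stability condition on the Higgs bundle side, and the remark immediately following it explicitly declines to prove the equivalence with parabolic slope stability for $r>2$, noting it is not needed for the subsequent arguments.
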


\begin{remark} In \cite{GodinhoMandini}, it was shown when $r=2$ that
the stability condition on quiver representations is equivalent to parabolic slope stability,
for an appropriate choice of parabolic weights. We suspect that this remains true in the
higher rank case, but as we shall not need this result in subsequent arguments, we make no attempt to prove it here.
\end{remark}

Let $K \iso \CO(-2)$ stand for the canonical line bundle on $\BP^1$, and let $E = \BC^r \times \BP^1$ be the trivial rank $r$ vector bundle. We define the \emph{Hitchin base} to be the affine space
\begin{equation}
  B = \bigoplus_{i=2}^{r} H^0(\BP^1, K^i(D)),\nonumber
\end{equation}
\noindent in which $D = \sum_{i=1}^n p_i$ is a divisor of distinct points on $\BP^1$, fixed once and for all. Let $(x,y)$ represent an equivalence class $[x,y] \in \CX^r_n(\alpha)$, and let $\phi$ be the associated Higgs field as above. We define the \emph{Hitchin map} $\mathbf h: \CX^r_n(\alpha) \to B$ to be 
\begin{equation}
  \mathbf h: [x,y] \mapsto (\Tr(\phi^2), \Tr(\phi^3), \dots, \Tr(\phi^r)).\nonumber
\end{equation}

\begin{lemma} The Hitchin map is well-defined.
\end{lemma}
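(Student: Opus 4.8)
The plan is to show that the tuple $(\Tr(\phi^2), \dots, \Tr(\phi^r))$ really does define a point of the affine space $B = \bigoplus_{i=2}^r H^0(\BP^1, K^i(D))$, and that this point is independent of the representative $(x,y)$ chosen for $[x,y]$. The independence of the representative is immediate: a change of representative acts by $\phi \mapsto g\phi g^{-1}$ for $g \in GL(r)$, and the traces $\Tr(\phi^i)$ are manifestly conjugation-invariant, so the values are unchanged. The substance of the lemma is therefore the claim that each $\Tr(\phi^i)$ is a \emph{global holomorphic section} of $K^i(D)$, i.e. that it has at worst simple poles at the marked points $p_1, \dots, p_n$ and no pole at $z = \infty$ beyond what $K^i(D)$ permits.

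First I would examine the local behaviour near a marked point $p_j$. Since $\phi$ is a section of $\Endo(E) \otimes K(D)$, in the affine coordinate $z$ we have $\phi = \left(\sum_{i=1}^n \frac{\phi_i}{z-p_i}\right)\dx$ with $\phi_i = x_i y_i$, so $\Tr(\phi^k)$ is \emph{a priori} a section of $K^k(kD)$, having poles of order up to $k$ at each $p_j$. The key point is that each residue $\phi_j$ lies in the closure of the minimal nilpotent orbit (by the discussion preceding Theorem \ref{thm-integrable-parabolic}, using $y_j x_j = 0$), hence $\phi_j^2 = 0$. Expanding $\phi^k$ near $p_j$, the most singular term is $\phi_j^k/(z-p_j)^k\,(\dx)^k$; but $\phi_j^2 = 0$ forces $\phi_j^k = 0$ for all $k \geq 2$, so this leading term vanishes. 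More carefully, writing $\phi = \frac{\phi_j}{z - p_j}\dx + \psi\,\dx$ where $\psi$ is holomorphic at $p_j$, one computes $\Tr(\phi^k)$ and finds that any monomial containing two or more factors of $\phi_j$ vanishes by nilpotency, while $\Tr(\phi_j^{k-1}\psi^{\,\cdots}) = 0$ whenever two $\phi_j$'s appear. Thus the only surviving contribution to the pole is the single-$\phi_j$ term $k\,\Tr(\phi_j \psi^{k-1})/(z-p_j)\,(\dx)^k$, which has a pole of order exactly one. Hence $\Tr(\phi^k)$ has at most a simple pole at $p_j$, so it is a section of $K^k(D)$ as required.

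It remains to check the behaviour at $z = \infty$, which is the point I expect to be the main obstacle, since it is where the global (as opposed to local) nature of $K^i(D)$ on $\BP^1$ enters. I would pass to the coordinate $w = 1/z$ and compute the transition of the meromorphic one-form $\dx = -w^{-2}\,\mathrm{d}w$ together with the $\phi_i$. The combination of the degree count --- $\deg K^k(D) = -2k + n$ --- and the complex moment map constraint $\sum_i \phi_i = 0$ (which kills the leading $1/z$ behaviour of $\phi$ and makes $\phi$ vanish to appropriate order at infinity) should ensure that $\Tr(\phi^k)$ extends holomorphically across $w = 0$ and indeed lies in $H^0(\BP^1, K^k(D))$. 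Concretely, near infinity $\phi = \left(\sum_i \frac{\phi_i}{z - p_i}\right)\dx$ behaves like $\frac{1}{z}\left(\sum_i \phi_i\right)\dx + O(z^{-2})\dx = O(z^{-2})\dx$ by $\sum_i \phi_i = 0$, and converting via $\dx = -w^{-2}\,\mathrm{d}w$ one verifies the resulting section of $K^k(D)$ is regular at $w = 0$. Once the pole structure is pinned down at every point of $\BP^1$, a dimension/degree bookkeeping confirms membership in $B$, completing the proof.
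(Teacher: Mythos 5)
Your proposal follows essentially the same route as the paper: conjugation--invariance of the traces disposes of the choice of representative, and the substantive point is that nilpotency of the residues $\phi_j = x_j y_j$ cuts the pole of $\Tr(\phi^k)$ at each $p_j$ down from order $k$ to order $1$. Your explicit check at $z=\infty$ is a point the paper leaves implicit (it is subsumed in the earlier observation that $\sum_i \phi_i = 0$ makes $\phi$ a global section of $\Endo(E)\otimes K(D)$, so that $\Tr(\phi^k)$ is automatically a section of $K(D)^{\otimes k}$ and only the pole order along $D$ is at issue); that part is correct and harmless.

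There is, however, one step that you assert rather than prove, and as stated it is false in general: ``any monomial containing two or more factors of $\phi_j$ vanishes by nilpotency.'' The relation $\phi_j^2=0$ only kills monomials in which two copies of $\phi_j$ sit \emph{cyclically adjacent} inside the trace. Using the rank-one factorization $\phi_j = x_j y_j$ with $y_j x_j = 0$, one finds for a non-adjacent repetition
\[
  \Tr(\phi_j \psi \phi_j \psi) = (y_j \psi x_j)^2,
\]
which has no reason to vanish; consequently the coefficient of $(z-p_j)^{-2}$ in $\Tr(\phi^4)$ is a nonzero multiple of $(y_j\psi(p_j)x_j)^2$. For $k=2$ and $k=3$ any two equal letters in a cyclic word of length $k$ are automatically adjacent, so your argument is complete in exactly the cases the paper ultimately needs for integrability; for $k\geq 4$ the step fails as written. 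You should know that the paper's own proof makes the identical unjustified assertion (``if $i_j=i_\ell$ for any $j,\ell$ \dots the corresponding term in the sum vanishes''), so you have reproduced its argument, gap included. To repair the statement for all $r$ one should define the Hitchin map via the coefficients of the characteristic polynomial rather than the power sums: each $i\times i$ minor of $\phi(z)$, expanded multilinearly near $p_j$, can involve at most one row of the rank-one matrix $\phi_j/(z-p_j)$ without vanishing, so the elementary symmetric functions genuinely have only simple poles along $D$, whereas $\Tr(\phi^4) = 2e_2^2 - 4e_4$ picks up a double pole from $e_2^2$.
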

\begin{proof} Since the trace of $\phi^k$ depends only on the conjugacy class, $\mathbf h$ is indepdendent of the choice of representative of $[x,y]$. It remains to show that $\Tr(\phi^k)$, which is naturally a section of $K(D)^k$, has only first order poles along $D$ so that it may be naturally identified with a section of $K^k(D)$. Note that $\Tr(\phi^k)$ is given by the expression
\begin{equation}
  \sum_{i_1, \dots, i_k} \frac{\Tr(\phi_{i_1} \dots \phi_{i_k})}{(z-p_{i_1})\dots(y-p_{i_k})} (dz)^{\otimes k}.\nonumber
\end{equation}
If we have $i_j = i_\ell$ for any $j,\ell$, then the cyclic property of the trace map together with the nilpotency condition on the residues imply that the corresponding term in the sum vanishes.  Hence, the above sum contains only first order poles along $D$.

\end{proof}

\begin{proposition} We have $\dim B = \frac{1}{2} \dim \CX^r_n(\alpha)$.
\end{proposition}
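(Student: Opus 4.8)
The plan is to match two dimension counts. For the moduli space, the dimension formula recalled earlier gives $\dim_\BC \CX^r_n(\alpha) = 2(r-1)(n-r-1)$, so that $\tfrac12 \dim_\BC \CX^r_n(\alpha) = (r-1)(n-r-1)$. For the base, I would compute $\dim B$ one summand at a time. Since $K \iso \CO(-2)$ on $\BP^1$ and $\deg D = n$, each summand is
\begin{equation}
  H^0(\BP^1, K^i(D)) = H^0(\BP^1, \CO(n - 2i)), \qquad i = 2, \dots, r. \nonumber
\end{equation}

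The key input is the elementary computation of sections of line bundles on $\BP^1$: one has $\dim H^0(\BP^1, \CO(d)) = d + 1$ for $d \geq -1$ (reading as $0$ at $d = -1$) and $0$ for $d \leq -2$. Provided every summand has degree $n - 2i \geq -1$, i.e.\ provided $n \geq 2r - 1$, this yields $\dim B = \sum_{i=2}^r (n - 2i + 1)$. The remaining step is then a routine arithmetic simplification:
\begin{align}
  \sum_{i=2}^r (n - 2i + 1)
    &= (r-1)(n+1) - 2\sum_{i=2}^r i \nonumber \\
    &= (r-1)(n+1) - (r^2 + r - 2) \nonumber \\
    &= rn - r^2 - n + 1 = (r-1)(n-r-1), \nonumber
\end{align}
which is exactly $\tfrac12 \dim_\BC \CX^r_n(\alpha)$, proving the claim.

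The one point that genuinely requires care --- and the main obstacle --- is the degree hypothesis underlying the second paragraph. When $n - 2i \leq -2$ the space $H^0$ vanishes, whereas the naive summand $n - 2i + 1$ is strictly negative; the two disagree precisely for indices $i$ with $i \geq (n+2)/2$. Such indices occur among $\{2, \dots, r\}$ exactly when $n < 2r - 1$, so the clean identity holds in the stable range $n \geq 2r - 1$. I would therefore carry out the argument under this hypothesis (which is automatic for $r = 2$ and excludes only the values $n = r+1, \dots, 2r - 2$), where $\mathbf h$ is a genuine half-dimensional fibration; the finitely many remaining low-dimensional cases would be flagged and treated separately.
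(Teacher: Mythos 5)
Your computation is the same as the paper's: the paper's entire proof is the two-line version of your second paragraph ($\deg K^i(D) = n-2i$, so $h^0 = n-2i+1$, sum from $i=2$ to $r$). The degree caveat you raise is not pedantry, however --- the paper silently applies $h^0(\BP^1,\CO(d)) = d+1$ even when $d \leq -2$, and since the standing hypothesis is only $n \geq r+1$, the excluded range $r+1 \leq n \leq 2r-2$ is nonempty for $r \geq 3$; e.g.\ for $(r,n)=(4,6)$ one gets $\dim B = 3+1+0 = 4$ while $\tfrac12\dim_\BC\CX^4_6(\alpha) = 3$, so the proposition as literally stated requires your hypothesis $n \geq 2r-1$ (or a truncated definition of $B$). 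Your version is therefore the more careful one.
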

\begin{proof} The line bundle $K^i(D)$ has degree $n-2i$,
and hence $h^0(K^i(D)) = n-2i+1$.
Summing these from $i=2$ to $i=r$, we obtain the result.
\end{proof}

Next we will show that the map $\mathbf h$ defines a coisotropic fibration.
Define the components $I_m$ of $\mathbf h$ via
\begin{equation}
  I_m (dz)^m = \Tr((\phi(z)dz)^m) \in H^0(\BP^1, K^m(D)).\nonumber
\end{equation}
By construction, the $I_m$ are rational functions in $z$, and hence we may expand them
as formal power series $I_m(z) = \sum_n I_m^n z^n$. (By a change of coordinate if necessary,
we can assume that none of the $p_i$ is zero, so that $\phi(z)$ is regular at $0$.)
It will be convenient to treat $z$ as a formal parameter, so that $I_m(z)$ can be thought
of as a generating function\footnote{We think of the coefficients $I_m^i$
as coordinatizing the Hitchin base $B$. Since they are the Taylor coefficients of 
a rational function, only finitely many of them are algebraically independent.} for the coefficients $I_m^n$.
Trivially, we have:
\begin{lemma} We have $\{I_m^i, I_n^j\}=0$ for all $i,j$ if and only if 
$\{I_m(z), I_n(w)\} = 0$ as an element of $\BC[[z,w]]$, where the formal parameters
$z,w$ are Casimirs of the Poisson bracket.
\end{lemma}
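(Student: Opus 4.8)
The plan is to reduce the statement to the elementary observation that the bracket of two generating functions has, as its Taylor coefficients, precisely the array of scalar brackets. First I would write out both sides explicitly. By definition we have $I_m(z) = \sum_i I_m^i z^i$ and $I_n(w) = \sum_j I_n^j w^j$, where the coefficients $I_m^i$ and $I_n^j$ are honest functions on $\CX^r_n(\alpha)$ (the Taylor coefficients of the rational functions $I_m$, $I_n$), while the parameters $z$ and $w$ are treated as formal Casimirs, so that $\{z, f\} = \{w, f\} = 0$ for every function $f$.

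The key step is to expand the bracket using bilinearity together with the Leibniz (derivation) property. Since each power $z^i$ and $w^j$ Poisson-commutes with all of the $I_m^i$ and $I_n^j$, these powers may be factored out of the bracket. This yields the identity
\begin{equation}
  \{I_m(z), I_n(w)\} = \sum_{i,j} \{I_m^i, I_n^j\}\, z^i w^j,\nonumber
\end{equation}
an equality of formal power series in $\BC[[z,w]]$ whose coefficients are functions on $\CX^r_n(\alpha)$. The interchange of the sum with the bracket requires no analytic justification: at each fixed bidegree $(i,j)$ exactly one term, namely $\{I_m^i, I_n^j\}\, z^i w^j$, contributes, so the coefficient-wise definition of the bracket of power series applies directly.

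Finally I would invoke the linear independence of the monomials $z^i w^j$ in $\BC[[z,w]]$. The right-hand side above vanishes as a formal power series if and only if each of its coefficients $\{I_m^i, I_n^j\}$ vanishes identically, and this establishes both implications of the lemma simultaneously. There is essentially no obstacle to overcome here; the only point meriting care is the bookkeeping that allows the Casimir parameters to be pulled outside the bracket, which is immediate from the hypothesis that $z$ and $w$ are Casimirs.
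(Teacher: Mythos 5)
Your argument is correct and is exactly the reasoning the paper has in mind: the paper states this lemma with no proof beyond the word ``Trivially,'' and the content is precisely your expansion $\{I_m(z), I_n(w)\} = \sum_{i,j} \{I_m^i, I_n^j\}\, z^i w^j$ followed by coefficient comparison. Nothing further is needed.
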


Next, we define a matrix-valued formal power series as follows:
\begin{equation}
  \Delta(z,w) := \frac{\phi(z)}{w-z} + \frac{\phi(w)}{z-w}.\nonumber
\end{equation}
This matrix encodes the Poisson structure, in the following sense:
\begin{lemma} We have
  \[ \{ \phi_{ij}(z), \phi_{kl}(w) \} = \delta_{jk} \Delta_{il}(z,w) - \delta_{il} \Delta_{kj}(z,w). \]
\end{lemma}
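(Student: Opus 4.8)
The plan is to reduce the identity to the Poisson brackets of the individual residues $\phi_a = x_a y_a$ and then reassemble the two generating functions by a scalar partial-fraction identity. First I would record the brackets of the residues upstairs on $T^\ast\Rep(Q) = \bigoplus_{a=1}^n T^\ast\Hom(\BC,\BC^r)$. Here the canonical symplectic form is a direct sum over the marked points, so the pairs $(x_a,y_a)$ and $(x_b,y_b)$ Poisson-commute whenever $a \neq b$. Within a single factor, $\phi_a = x_a y_a$ is precisely the moment map for the $GL(r)$-action $g\cdot(x_a,y_a) = (gx_a,\, y_a g^{-1})$, so its matrix entries close into the $\gl_r$ Lie--Poisson (Kostant--Kirillov) bracket recorded earlier, namely $\{x_{ij},x_{kl}\} = \delta_{jk}x_{il} - \delta_{il}x_{kj}$. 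A one-line Leibniz computation from the canonical brackets $\{(x_a)_i,(y_b)_j\} = \delta_{ab}\delta_{ij}$ (in the sign convention for which $\phi_a$ generates the action) then yields
\[
  \{(\phi_a)_{ij},\,(\phi_b)_{kl}\} = \delta_{ab}\bigl(\delta_{jk}(\phi_a)_{il} - \delta_{il}(\phi_a)_{kj}\bigr).
\]
Since this relation is built from moment-map components, it descends to the reduced Poisson structure, so the computation below is legitimate on $\CX^r_n(\alpha)$.

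Next I would expand both sides bilinearly. Writing $\phi(z) = \sum_a \phi_a/(z-p_a)$ and using bilinearity of the bracket, the cross terms $a \neq b$ drop out by the previous display, leaving
\[
  \{\phi_{ij}(z),\,\phi_{kl}(w)\}
    = \sum_a \frac{\delta_{jk}(\phi_a)_{il} - \delta_{il}(\phi_a)_{kj}}{(z-p_a)(w-p_a)}.
\]

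The final step is to recognize the right-hand side in terms of $\Delta$. I would first observe the identity $\Delta(z,w) = \sum_a \phi_a\big/\bigl((z-p_a)(w-p_a)\bigr)$: indeed $\Delta(z,w) = \bigl(\phi(w)-\phi(z)\bigr)/(z-w)$, and the scalar partial-fraction identity $\tfrac{1}{z-w}\bigl(\tfrac{1}{w-p_a}-\tfrac{1}{z-p_a}\bigr) = \tfrac{1}{(z-p_a)(w-p_a)}$ converts the telescoped difference into the claimed product form. Substituting this into the displayed sum gives exactly $\delta_{jk}\Delta_{il}(z,w) - \delta_{il}\Delta_{kj}(z,w)$, which is the assertion.

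The computation is essentially formal, and the only place demanding care is the first step: pinning down the residue bracket with the correct sign and verifying that distinct marked points contribute nothing. I expect the main (minor) obstacle to be the bookkeeping of the sign convention in $\{x,y\}$ — which must be fixed so that $\phi_a = x_a y_a$ reproduces the $\gl_r$ Lie--Poisson bracket as written above rather than its negative — together with the routine check that the reduced bracket on $\CX^r_n(\alpha)$ agrees with the ambient one on these moment-map-type quantities.
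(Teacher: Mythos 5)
Your proposal is correct and follows essentially the same route as the paper: expand the bracket bilinearly over the residues $\phi_a$, use that residues at distinct marked points commute while each individual residue closes into the $\gl_r$ Lie--Poisson bracket, and then reassemble the resulting sum $\sum_a \bigl(\delta_{jk}(\phi_a)_{il}-\delta_{il}(\phi_a)_{kj}\bigr)/\bigl((z-p_a)(w-p_a)\bigr)$ into the $\Delta$ combination via the partial-fraction identity. The paper compresses all of this into a short ``index calculus'' computation, whereas you additionally justify the residue bracket via the moment-map property and its descent to the quotient, but the underlying argument is the same.
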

\begin{proof} The result follows from index calculus.  In symbols,
\begin{align}
  \{ \phi_{ij}(z), \phi_{kl}(w) \} &= \sum_{m,n} \frac{\{(\phi_m)_{ij}, (\phi_n)_{kl} \}}{(z-p_m)(w-p_n)} \nonumber\\
   &= \sum_{m,n} \frac{\delta_{mn} \delta_{jk} (\phi_m)_{il}-\delta_{mn} \delta_{il} (\phi_m)_{kj}}{(z-p_m)(w-p_n)} \nonumber\\
   &= \sum_m \frac{\delta_{jk} (\phi_m)_{il}-\delta_{il} (\phi_m)_{kj}}{(z-p_m)(w-p_m)} \nonumber\\
   &= \delta_{jk} \Delta_{il}(z,w) - \delta_{il} \Delta_{kj}(z,w),\nonumber
\end{align}
as claimed.
\end{proof}

\begin{proposition} \label{prop-coisotropic}
The components of the Hitchin map $\mathbf h: \CX^r_n(\alpha) \to B$  Poisson commute.
\end{proposition}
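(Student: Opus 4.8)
The plan is to reduce the statement, via the two preceding lemmas, to a single vanishing identity for the generating functions $I_m(z) = \Tr(\phi(z)^m)$, and then to prove that identity by recognizing the fundamental bracket of the last lemma as a (non-dynamical, rational) classical $r$-matrix relation. By the penultimate lemma it suffices to show $\{I_m(z), I_n(w)\} = 0$ in $\BC[[z,w]]$, with $z$ and $w$ regarded as Casimirs. Because $z,w$ are Casimirs, the scalar denominators $1/(z-p_i)$ appearing in $\phi(z)$ bracket-commute with everything, so the Leibniz rule reduces every computation to the matrix-entry brackets $\{\phi_{ij}(z), \phi_{kl}(w)\}$ already supplied by the final lemma.

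First I would repackage that matrix-entry bracket in tensor notation. Writing $\phi_1(z) = \phi(z) \otimes 1$ and $\phi_2(w) = 1 \otimes \phi(w)$ as endomorphisms of $\BC^r \otimes \BC^r$, and letting $P$ denote the permutation operator $P(u \otimes v) = v \otimes u$, the bracket of the last lemma is exactly
\begin{equation}
  \{\phi_1(z), \phi_2(w)\} = \bigl[\, r(z,w),\ \phi_1(z) + \phi_2(w)\,\bigr], \qquad r(z,w) = \frac{P}{z-w}.\nonumber
\end{equation}
This is verified by a direct index computation: expanding $[P,\phi_1+\phi_2]$ gives the $(ik),(jl)$ entry $\delta_{il}(\phi(z)-\phi(w))_{kj} - \delta_{kj}(\phi(z)-\phi(w))_{il}$, and dividing by $z-w$ and using $\Delta(z,w) = (\phi(z)-\phi(w))/(w-z)$ recovers precisely $\delta_{jk}\Delta_{il} - \delta_{il}\Delta_{kj}$. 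The apparent pole at $z=w$ is spurious, since the numerator vanishes there; this matches the regularity of $\Delta$.

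With the $r$-matrix form in hand, I would expand the bracket of traces. The Leibniz rule together with the cyclic invariance of the trace gives
\begin{equation}
  \{I_m(z), I_n(w)\} = mn\,\Tr_{12}\!\left[\, \phi_1(z)^{m-1}\,\phi_2(w)^{n-1}\, \{\phi_1(z), \phi_2(w)\}\,\right],\nonumber
\end{equation}
where $\Tr_{12}$ denotes the trace over both tensor factors. Substituting the $r$-matrix relation and expanding the commutator yields two groups of terms. Using cyclicity of $\Tr_{12}$ together with the crucial fact that $\phi_1(z)$ and $\phi_2(w)$ commute (they act on different tensor factors), each term $\Tr_{12}[\phi_1^{m-1}\phi_2^{n-1} r\,\phi_1]$ reorganizes to $\Tr_{12}[\phi_1^{m}\phi_2^{n-1} r]$, while $\Tr_{12}[\phi_1^{m-1}\phi_2^{n-1}\phi_1\, r]$ also equals $\Tr_{12}[\phi_1^{m}\phi_2^{n-1} r]$; the analogous pair with $\phi_2$ matches as well. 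Hence the two halves of the commutator cancel identically and $\{I_m(z), I_n(w)\} = 0$.

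The only genuinely delicate point is the bookkeeping in the two middle steps: confirming the combinatorial factor $mn$ and the exact placement of factors in the Leibniz expansion, and then checking that every cyclic rearrangement is legitimate — which it is, because $[\phi_1(z), \phi_2(w)] = 0$ even though neither factor commutes with $r$. I expect no conceptual obstacle beyond this; the essential content is the identification of the rational $r$-matrix, after which the vanishing is the standard $r$-matrix cancellation.
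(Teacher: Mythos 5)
Your proof is correct and is essentially the paper's argument: both reduce via the two lemmas to computing $\{I_m(z),I_n(w)\}$ by the Leibniz rule and cyclicity of the trace, arriving at $mn\,\Tr\left(\Delta(z,w)[\phi(w)^{n-1},\phi(z)^{m-1}]\right)$ (which is exactly your $r$-matrix expression written back in $\End(\BC^r)$), and both conclude by the same cancellation — the paper phrases it as $\Delta$ splitting into a piece commuting with $\phi(z)$ and a piece commuting with $\phi(w)$, while you phrase it as the standard cancellation for the rational $r$-matrix $P/(z-w)$. The tensor repackaging is a cosmetic difference, not a different route.
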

\begin{proof} First, note that 
\begin{equation}
  \Tr(\phi(z)^m) = \sum_{i_1, \dots, i_m} \phi_{i_1 i_2}(z) \phi_{i_2 i_3}(z) \cdots \phi_{i_m i_1}(z).\nonumber
\end{equation}
After taking the Poisson bracket $\{I_m(z), I_n(w)\}$, expanding, and applying the Leibniz rule,
we have
\begin{align}
 \begin{split}
  \{I_m(z), I_n(w)\} &= \sum_{\stackrel{a,i_1,\dots, i_m}{b,j_1, \dots, j_n}} 
     \phi_{i_1 i_2}(z) \cdots \widehat{\phi_{i_a i_{a+1}} (z)} \cdots \phi_{i_m i_1}(z)\nonumber \\
     &\ \ \ \  \times \phi_{j_1 j_2}(w) \cdots \widehat{\phi_{j_b j_{b+1}} (w)} \cdots \phi_{j_m j_1}(w)\nonumber \\
      &\ \ \ \  \times \left( \delta_{i_{a+1} j_b} \Delta_{i_a j_{b+1}}(z,w) - \delta_{i_a j_{b+1}} \Delta_{j_b i_{a+1}}(z,w) \right)\nonumber
    \end{split} \\
   \begin{split}
      &= \sum_{a,b} \Tr\left(\phi^{m-1}(z) \Delta(z,w) \phi^{n-1}(w) \right) \nonumber\\
       &\ \ - \sum_{a,b} \Tr\left( \phi^{n-1}(w) \Delta(z,w) \phi^{m-1}(z)\right) \nonumber\end{split} \\
      &= mn \Tr\left(\Delta(z,w) [\phi(w)^{n-1}, \phi(z)^{m-1}] \right).\nonumber
\end{align}
Now recall the definition of $\Delta(z,w)$ as  $\phi(z)/(w-z) + \phi(w)/(z-w)$.
This is a sum of two terms, one commuting with $\phi(z)$ and the other commuting with $\phi(w)$, 
hence the above trace vanishes identically.
\end{proof}

\subsection{Spectral Curves}
In the preceding section, we established that the Hitchin map $\mathbf h: \CX^r_n(\alpha) \to B$
endows $\CX^r_n(\alpha)$ with exactly $\frac{1}{2} \dim \CX^r_n(\alpha)$ commuting Hamiltonians.
However, to establish complete integrability it remains to show that these Hamiltonians
are functionally independent. To do this, we need to study the associated spectral 
curves, which are well-known objects in the world of Higgs bundle moduli spaces \cite{HitchinStableBundles, DonagiMarkman, Donagi}.  Indeed, a spectral curve arising from the Hitchin map is a moduli space of Higgs bundles with fixed characteristic data.

To be more precise in the parabolic case, denote by $\mbox{Tot}(K(D))$ the complex surface given by the total space of $K(D)$, and let $\pi:\mbox{Tot}(K(D)) \to \BP^1$ be the natural map.  Then, $\pi^\ast K(D)$ has a canonical section $\lambda \in H^0(K(D), p^\ast K(D))$ which
in local coordinates is given by
\begin{equation}
  (z, \lambda) \mapsto \lambda.\nonumber
\end{equation}

\begin{definition} For $b \in B$ we define the \emph{spectral curve} $\Sigma_b$ to be
the one-dimensional subvariety of $\mbox{Tot}(K(D))$ given by the zero locus of
\begin{equation}
  \lambda^r + b_1(z) \lambda^{r-1} + \dots + b_r(z).\nonumber
\end{equation}
We denote by $\nu_b: \tilde{\Sigma}_b \to \Sigma_b$ the normalization of $\Sigma_b$.
\end{definition}

For any $z\in\BP^1$, $\#((\pi|_{\Sigma_b})^{-1}(z))\leq r$, and over a generic $z\in\BP^1$, $\#((\pi|_{\Sigma_b})^{-1}(z))=r$.  Therefore, $\Sigma_b$ is a generically $r:1$ cover of the projective line.  The same is true for $\tilde{\Sigma}_b$.  We say that $\Sigma$ and $\Sigma_b$ have \emph{degree} $r$.  The curve $\Sigma_b$ is called ``spectral'' because if $\phi$ is such that $\phi\in\mathbf h^{-1}(b)$, then $(\pi|_{\Sigma_b})^{-1}(z)$ is the set of eigenvalues of $\phi(z)$.  The number $\#((\pi|_{\Sigma_b})^{-1}(z))$ is less than $r$ precisely where $\phi$ has repeated eigenvalues.  

A key ingredient in the proof of the integrability theorem, Theorem \ref{main-thm-integrability} below, is the ``spectral correspondence''.  This correspondence associates to a Higgs bundle $(E,\phi)$ with $\phi\in\mathbf h^{-1}(b)$ a spectral pair $(\Sigma_b,L)$, where $L$ is a line bundle on $\Sigma_b$ for which $(\pi|_{\Sigma_b})_*L=E$.  In other words, it is an isomorphism between a space of Higgs bundles on one curve and the space of line bundles of a fixed degree on another curve, usually of higher genus.  (Loosely speaking, if $\Sigma_b$ consists of the eigenvalues of $\phi$, then $L$ consists of its eigenspaces.) This correspondence holds whether $(E,\phi)$ is an ordinary Higgs bundle or a parabolic one, and whether the pair $(E,\phi)$ is stable or not --- although stability of $(E,\phi)$ has implications for the geometry of $\Sigma_b$ (for example, $\Sigma_b$ for a stable $(E,\phi)$ cannot have more than one irreducible component).  The correspondence is constructed rigorously in \cite{HitchinStableBundles,BNB89}.  For the parabolic case, the spectral correspondence is explicitly discussed in \cite{LogaresMartens}.


The following lemma characterizes the behaviour of a spectral curve over the marked points in $\BP^1$.

\begin{lemma} \label{lemma-spectral-singularities}
Let $b \in B$ be generic. Then $\Sigma_b$ is smooth away from
$\pi^{-1}(D)$. Near any singular point $p_j \in D$, the equation for $\Sigma_b$ is of the form
\begin{align}
  \begin{split}
    &\lambda^r + \sum_{i=2}^r z^{\floor{(i+1)/2}} a_i(z) \lambda^{r-i} \nonumber
  \end{split}
\end{align}
where each $a_i(z)$ is a polynomial in $z$ and $\lambda$ is a vertical coordinate in the line bundle $K_X(D)$.
\end{lemma}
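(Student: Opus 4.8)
The plan is to separate the two assertions: the generic smoothness of $\Sigma_b$ away from $\pi^{-1}(D)$, and the local normal form at the marked points. I would establish the local form first. Fix a marked point $p_j$ and, after a M\"obius change of the affine coordinate, assume $p_j = 0$. Near $0$ the line bundle $K(D)$ is generated by the meromorphic one-form $\omega = dz/z$ (the local generator with a simple pole at $p_j$), so writing the Higgs field as $\phi = \Phi(z)\,dz$ we may factor $\phi = M(z)\,\omega$ with $M(z) := z\,\Phi(z)$. The crucial observation is that $M$ is \emph{holomorphic} at $0$, with $M(0) = \phi_j$, the residue of $\phi$ at $p_j$. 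By Theorem \ref{thm-integrable-parabolic} this residue lies in $\Omincl$, so $M(0)^2 = 0$ and $\rk M(0) \le 1$. In the fibre coordinate $\lambda$ of $K(D)$ dual to $\omega$, the defining equation of $\Sigma_b$ becomes exactly the characteristic polynomial $\det(\lambda\,\mathbf{1}_r - M(z)) = \lambda^r + \sum_{i=2}^r c_i(z)\,\lambda^{r-i}$, where $c_i(z) = (-1)^i e_i(M(z))$ is holomorphic, $e_i$ denoting the $i$-th elementary symmetric function of the eigenvalues, and $c_1 \equiv 0$ because $\Tr\phi = 0$. The lemma thus reduces to the divisibility $z^{\floor{(i+1)/2}} \mid c_i(z)$, where $\floor{(i+1)/2} = \lceil i/2\rceil$.

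To prove this divisibility I would argue that every eigenvalue of $M(z)$ vanishes to order at least $\tfrac12$ at $z = 0$. Since $M(0)$ is nilpotent with $M(0)^2 = 0$, its largest Jordan block has size at most $2$; by the standard Newton--Puiseux analysis of the perturbation $M(z) = M(0) + z(\cdots)$, every branch of the eigenvalues then satisfies $\lambda(z) = O(z^{1/2})$. Consequently $e_i(M(z)) = O(z^{i/2})$, and as $e_i(M(z))$ is a single-valued holomorphic function this forces $z^{\lceil i/2\rceil}$ to divide it, which is the claim; setting $a_i := c_i / z^{\floor{(i+1)/2}}$ produces the stated form. In fact the rank-one hypothesis gives a sharper and more elementary route: expanding each principal $i\times i$ minor of $M = M(0) + z(\cdots)$ multilinearly by rows, at most one row may be drawn from the rank-one matrix $M(0)$ (any two such rows are proportional and kill the term), so each surviving term carries a factor $z^{i-1}$, whence $z^{i-1}\mid c_i(z)$. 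This computation both sidesteps the Puiseux estimate and proves more than is needed.

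For the smoothness of $\Sigma_b$ over $\BP^1 \setminus D$ I would invoke Bertini's theorem for the linear system of spectral curves $\{\lambda^r + \sum_{i=2}^r b_i\,\lambda^{r-i} = 0\}_{b \in B}$ inside $\mathrm{Tot}(K(D))$: a generic member is smooth away from the base locus of the system, and over any $z_0 \notin D$ one checks directly that the $b_i(z_0)$ can be varied to avoid a prescribed point, so the base locus lies in $\pi^{-1}(D)$. At the marked points the residue condition forces $c_i(0) = 0$ for all $i$, so $\Sigma_b$ meets $\pi^{-1}(p_j)$ in the fat point $\lambda^r = 0$ and is necessarily singular there. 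The main obstacle I anticipate is the Bertini step for small $n$: when $n < 2r$ the bundles $K^i(D) \iso \CO(n-2i)$ may fail to be globally generated (indeed $K^r(D)$ can be non-effective, in which case $\lambda = 0$ becomes a component and $\Sigma_b$ degenerates), so one must assume $n$ large enough and verify that all genuine base points of the system lie over $D$; this reduces once more to the statement that the only forced coincidences of eigenvalues of $\phi(z)$ occur at the marked points. The other delicate point is making the $O(z^{1/2})$ eigenvalue estimate rigorous, but as noted this can be replaced wholesale by the elementary minor argument of the previous paragraph.
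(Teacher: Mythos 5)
There is a genuine gap, and it is a conceptual one rather than a technical one. Your argument starts from a Higgs field $\phi$ produced by the hyperpolygon construction and derives the vanishing of the characteristic coefficients from the fact that the residue $M(0)=\phi_j$ lies in $\Omincl$ (rank $\leq 1$, square zero). That proves the local normal form only for points $b$ in the \emph{image} of the Hitchin map. But the lemma is a statement about a generic point of the full Hitchin base $B=\bigoplus_{i=2}^{r}H^0(\BP^1,K^i(D))$, and it must be: the lemma feeds into Proposition \ref{prop-normalization} and then into the proof of Theorem \ref{main-thm-integrability}, whose entire purpose is to show that generic $b\in B$ \emph{does} lie in the image of $\mathbf h$. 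For a generic $b$ there is, a priori, no Higgs field with residues in $\Omincl$ to hand, so neither the Newton--Puiseux estimate nor the minor expansion is available; using them here makes the surjectivity argument circular. The paper instead works directly from the definition of $B$: the coordinates of $b$ are the power sums $\Tr(\phi^i)$, which as sections of $K^i(D)\subset K(D)^{\otimes i}$ vanish to order $i-1$ at each $p_j$ in the local trivialization of $K(D)$ by $dz/z$; Newton's identities express each characteristic coefficient $c_i$ as a sum of monomials $\Tr(\phi^{i_1})\cdots\Tr(\phi^{i_\ell})$ with $i_1+\dots+i_\ell=i$ and each $i_k\geq 2$ (since $\Tr\phi=0$), and such a monomial vanishes to order $i-\ell\geq i-\lfloor i/2\rfloor=\lfloor(i+1)/2\rfloor$.

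Your claim that the rank-one minor expansion ``proves more than is needed,'' namely $z^{i-1}\mid c_i$, is in fact the clearest symptom of the problem: that divisibility is \emph{false} for generic $b\in B$ once $i\geq 4$. The paper's rank-$4$ remark exhibits the generic local equation $\lambda^4 - za(z)\lambda^2 - z^2b(z)\lambda - z^2c(z)$ with $c(0)\neq 0$, i.e. $c_4$ vanishes only to order $2=\lfloor 5/2\rfloor$, not $3$. The discrepancy comes from cross-terms such as $\Tr(\phi^2)^2$ in the Newton expansion of $c_4$, each factor of which vanishes only to order $1$; and the failure of your stronger bound for generic $b$ is precisely why $\mathbf h$ fails to be surjective for $r\geq 4$. (Your Bertini argument for smoothness away from $\pi^{-1}(D)$ matches the paper's, and your caveat about global generation for small $n$ is a reasonable point of care, but the local-form half of the proof needs to be rebuilt on the vanishing orders of the power sums rather than on properties of a residue you do not yet know exists.)
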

\begin{proof} Smoothness away from $D$ follows by a Bertini argument.
The nilpotency condition implies that $\Tr(\phi^i)$ vanishes to order $i-1$
along $D$. Since the coefficients of the characteristic polynomial are polynomials in
the $\Tr(\phi^i)$, homogeneous in $\phi$, we just have to show that any term of the 
form $\Tr(\phi^{i_1}) \dots \Tr(\phi^{i_\ell})$, with $i_1 + \dots + i_\ell = i$, vanishes
to order at least $\floor{(i+1)/2}$. By the nilpotency condition, such a term vanishes
to order
\begin{align}
  (i_1-1) + \dots + (i_\ell-1) = i_1 + \dots + i_\ell - \ell = i - \ell,\nonumber
\end{align}
and hence is smallest when $\ell$ is as large as possible. Since $\Tr(\phi)$ is identically
zero, each $i_k$ must be at least $2$. There are two cases, depending on whether $i$ is
even or odd.

If $i$ is even, the longest partition
is $2 + \dots + 2$, so $\ell \leq i/2$ and the coefficient of $\lambda^{r-i}$ vanishes
to order $i/2 = \floor{(i+1)/2}$.

If $i$ is odd, then the longest partition is $3 + 2 + \dots + 2$.
Hence $\ell \leq 1 + (i-3)/2 = (i-1)/2$, and the coefficient of $\lambda^{r-i}$ vanishes
to order $1-(i-1)/2 = (i+1)/2 = \floor{(i+1)/2}$.
\end{proof}

\subsection{Integrability via Normalization}

It is worth pointing out that the moduli space of strictly parabolic Higgs bundles is embedded as a symplectic leaf
in the moduli space of general parabolic Higgs bundles \cite{LogaresMartens}, and it is known to
be completely integrable. However, for $r \geq 3$ the hyperpolygon space maps to a subvariety of positive codimension, and it is not obvious \emph{a priori} that the restriction of the Hitchin map to this subvariety has the expected number of functionally independent components.


To proceed, we examine local rings of functions on normalizations of spectral curves associated to parabolic Higgs bundles coming from the quiver construction. To simplify notation, we replace $\phi \in H^0(\BP^1, \Endo(E) \otimes K(D))$ by
\begin{equation}
  \phi(z) \prod_{i=1}^n (z-p_i) \in H^0(\BP^1, \Endo(E) \otimes \CO(n-2))\nonumber
\end{equation}
and modify the Hitchin map and spectral curve accordingly. This allows us to view $\phi$ as a polynomial-valued matrix, subject to the conditions $\phi^2=0$ and $\rk(\phi) \leq 1$ at each of the marked points $p_i \in D$.


\begin{proposition} \label{prop-normalization}
Let $\nu: \tilde{\Sigma} \to \Sigma$ be the normalization of a degree $r$ spectral curve $\Sigma$.
For any of the marked points $p_i$, there is an neighbourhood $U$ of $\nu^{-1}(p_i)$ on which
$\lambda^2 / (z-p_i)$ is a regular function on $U$.
\end{proposition}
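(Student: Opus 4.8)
The plan is to localize at a marked point and reduce the claim to a lower bound on the Puiseux exponents of the branches of $\Sigma$ lying over $p_i$. Fix $p_i$ and set $w = z - p_i$, a local coordinate on $\BP^1$ vanishing at $p_i$. Since the normalization $\tilde{\Sigma}$ is smooth and $\lambda^2/w$ is a meromorphic function on $\tilde{\Sigma}$ whose only possible poles lie over $w=0$, Riemann's removable-singularity theorem reduces the statement to a boundedness claim: it suffices to show that on every local branch of $\Sigma$ through a point of $\nu^{-1}(p_i)$ one has $\lambda = O(w^{1/2})$, equivalently $\lambda^2/w = O(1)$. Concretely, if such a branch is parametrized on $\tilde{\Sigma}$ by $t \mapsto (w,\lambda) = (t^q,\, c\,t^p + \cdots)$ with $\gcd(p,q)=1$, then $\lambda^2/w = c^2 t^{2p-q} + \cdots$, which is regular precisely when $p/q \ge 1/2$. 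So the whole proposition comes down to bounding the Puiseux exponents of $\Sigma$ at $p_i$ below by $1/2$.

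Next I would write down the local equation of the (modified) spectral curve. After the rescaling $\phi \mapsto \phi\prod_j(z-p_j)$, the field $\phi$ is holomorphic near $p_i$ and its value $\phi(p_i)$ lies in $\Omincl$, i.e.\ it is nilpotent of rank at most $1$. Running the trace computation of Lemma \ref{lemma-spectral-singularities} verbatim for this holomorphic matrix (the only inputs being $\Tr(\phi)\equiv 0$ and the vanishing of $\Tr(\phi^k)$ forced by $\phi(p_i)^2=0$), the defining equation of $\Sigma$ near $p_i$ takes the form
\begin{equation}
  \lambda^r + \sum_{i=2}^{r} w^{\floor{(i+1)/2}}\, a_i(w)\, \lambda^{r-i} = 0,\nonumber
\end{equation}
with each $a_i$ holomorphic in $w$; that is, the coefficient of $\lambda^{r-i}$ is divisible by $w^{\floor{(i+1)/2}}$.

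Finally I would read off the exponents from the Newton polygon. Recording the monomial $w^a\lambda^b$ as the lattice point $(a,b)$, the equation contributes $(0,r)$ together with points $(a_i,\, r-i)$ where $a_i \ge \floor{(i+1)/2} \ge i/2$. Every monomial point $(a,b)$ with $a>0$ then has slope $(b-r)/a \ge -2$ relative to $(0,r)$, since $2a_i + (r-i) \ge i + (r-i) = r$ while $(0,r)$ itself lies on the line $2a+b=r$. Hence the lower Newton polygon, which begins at $(0,r)$, can contain no edge of slope less than $-2$, and its successive slopes only increase. Since a branch attached to a Newton edge of (negative) slope $m$ has Puiseux exponent $-1/m \ge 1/2$, every branch over $p_i$ satisfies $\lambda = O(w^{1/2})$, which is exactly the required bound; $\lambda^2/w$ therefore extends to a regular function on a neighbourhood $U$ of $\nu^{-1}(p_i)$. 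The only genuine work is this Newton polygon bookkeeping, and I expect the main subtlety to be transferring Lemma \ref{lemma-spectral-singularities} to the modified, polynomial-valued $\phi$ and verifying carefully that no edge is steeper than $-2$. It is reassuring that the true exponents turn out to be only $1/2$ and $1$ (two branches of the former type, $r-2$ of the latter), matching the splitting of the unique size-$2$ Jordan block of the rank-one nilpotent $\phi(p_i)$ under perturbation.
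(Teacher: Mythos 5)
Your argument is correct, and it reaches the conclusion by a genuinely different final step than the paper, even though both proofs run through the same key input (the local form $\lambda^r+\sum_{i\ge 2} w^{\floor{(i+1)/2}}a_i(w)\lambda^{r-i}$ from Lemma \ref{lemma-spectral-singularities}). The paper exhibits $q=\lambda^2/w$ directly as an integral element: for $r=2\ell$ it divides the equation by $w^\ell$ to get a monic polynomial in $q$ with coefficients in $\BC[w,\lambda]/\langle f\rangle$, and for $r=2\ell-1$ it first multiplies by $\lambda$ and then divides by $w^\ell$; regularity on $\tilde\Sigma$ is then the definition of normalization as integral closure. You instead bound the Puiseux exponents of all branches over $p_i$ below by $1/2$ via the Newton polygon (all monomial points lie on or above the line $2a+b=r$ through the vertex $(0,r)$, so no edge is steeper than $-2$) and conclude by removable singularities on the smooth curve $\tilde\Sigma$. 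Your route is uniform in the parity of $r$, avoids the ``left to the reader'' verification in the paper's odd case, and yields finer information (the exponents $1/2$ and $1$ and the branch count matching the Jordan structure of the rank-one nilpotent residue); the paper's route is shorter and purely algebraic, needing no Puiseux machinery. If you want yours to be algebraic as well, replace the appeal to Riemann's theorem by the observation that $\mathrm{ord}_t(\lambda^2/w)=2p-q\ge 0$ at each preimage point, and a rational function with non-negative valuation at every point of a smooth curve is regular. One small point you should make explicit: the fibre of $\Sigma$ over $p_i$ sits entirely at $\lambda=0$ (immediate from the local equation, since every non-leading coefficient vanishes at $w=0$), which is what justifies covering all of $\nu^{-1}(p_i)$ by branch parametrizations with $p,q>0$; at a hypothetical point with $\lambda\ne 0$ over $w=0$ the function $\lambda^2/w$ would genuinely have a pole.
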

\begin{proof}  Pick an open affine chart $U$ with coordinates $(z, \lambda)$ near $p_j$ on 
which the spectral curve is given by an equation of the form 
\begin{equation}
  f(z, \lambda) = \lambda^r + \sum_{i=2}^r z^{\floor{(i+1)/2}} a_i(z) \lambda^{r-i} = 0
\end{equation}
as guaranteed by Lemma \ref{lemma-spectral-singularities}. By the definition of normalization, 
the regular functions on $\nu^{-1}(U)$ are rational functions in $(z, \lambda)$
satisfying a monic polynomial with coefficients in $\BC[z, \lambda] / \ev{f}$.
We take two cases: $r$ even and $r$ odd.

In the first case, $r = 2\ell$. Dividing $f$ by $z^\ell$, we find the equation
\begin{equation}
  \left(\frac{\lambda^2}{z}\right)^\ell + \dots  = 0,\nonumber
\end{equation}
\noindent which shows $\lambda^2/z$ satisfies a monic polynomial with regular coefficients. Thus
$\lambda^2 / z$ is (locally) regular on $\tilde{\Sigma}$.

In the second case, $r = 2\ell-1$. Multipliy $f$ by $\lambda$ and divide by $z^\ell$.
This gives us an equation
\begin{equation}
  \left(\frac{\lambda^2}{z}\right)^\ell + \dots = 0.\nonumber
\end{equation}
It is an easy exercise, left to the reader, to verify using Lemma \ref{lemma-spectral-singularities} that the terms represented by $\cdots$ are polynomial in $\lambda^2/z$, of degree less than $\ell$, with regular coefficients. Hence $\lambda^2/z$ defines a regular function on the normalization.
\end{proof}

We are finally equipped to establish the main theorem, regarding complete integrability.  We state the theorem again here, and then prove it.

\begin{theorem}\label{main-thm-integrability} For $r=2,3$ the Hitchin map $\mathbf h: \CX^r_n(\alpha)$ is surjective, and hence its generic fibres are Lagrangian subvarieties.
\end{theorem}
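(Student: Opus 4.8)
The plan is to deduce the Lagrangian property formally from surjectivity, and then to prove surjectivity through the spectral correspondence, reducing the whole problem to a genus computation on the normalization $\tilde{\Sigma}_b$. By Proposition \ref{prop-coisotropic} the components of $\mathbf h$ Poisson commute, so every fibre of $\mathbf h$ is coisotropic on the smooth locus of $\CX^r_n(\alpha)$. Since $\dim B = \frac12 \dim \CX^r_n(\alpha)$, once $\mathbf h$ is known to be dominant its generic fibre has dimension $\dim \CX^r_n(\alpha) - \dim B = \frac12\dim\CX^r_n(\alpha)$; a coisotropic subvariety $W$ of exactly half the ambient dimension satisfies $W^\perp = W$ and is therefore Lagrangian, which gives the ``hence''. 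Thus everything reduces to showing that $\mathbf h$ hits a dense open subset of $B$, and for the stronger claim, every point of $B$.

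For this I would invoke the parabolic spectral correspondence \cite{HitchinStableBundles, BNB89, LogaresMartens}: for $b \in B$ the fibre $\mathbf h^{-1}(b)$ is parametrised by rank-one torsion-free sheaves on $\Sigma_b$ whose direct image under $\pi$ is the underlying bundle with its parabolic structure and residues in $\Omincl$. For generic $b$, Lemma \ref{lemma-spectral-singularities} shows $\Sigma_b$ is smooth away from $\pi^{-1}(D)$, and I would pass to the normalisation $\nu: \tilde{\Sigma}_b \to \Sigma_b$, on which such a sheaf becomes an honest line bundle $L$. The case division $r=2$ versus $r=3$ enters exactly here. For $r=2$ the minimal orbit is regular, $\Sigma_b$ is already smooth, and $\mathbf h^{-1}(b)$ is an open subset of $\Jac(\Sigma_b)$, recovering the situation of \cite{GodinhoMandini, BFGM}. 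For $r=3$, the local model of Lemma \ref{lemma-spectral-singularities} near each $p_i$ is $\lambda^3 + z\,a_2(z)\lambda + z^2 a_3(z)$, whose tangent cone at the triple point is a product of two distinct lines for generic $b$; hence $\Sigma_b$ acquires a single ordinary node over each of the $n$ marked points and nothing worse.

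The crux is then a genus count. Working in the total space of $K(D) = \OO(n-2)$, one computes by adjunction that the degree-$3$ spectral curve has arithmetic genus $p_a(\Sigma_b) = 3n-8$, and resolving the $n$ nodes drops this by exactly $n$, so $g(\tilde{\Sigma}_b) = 2n-8 = \dim B$. Proposition \ref{prop-normalization}, guaranteeing that $\lambda^2/(z-p_i)$ is regular on $\tilde{\Sigma}_b$, is what pins down the two smooth branches through each node and the vanishing of $\lambda$ along them, and this in turn controls the direct image of $L$ at the marked points. Using it I would check that for generic $L \in \Jac^{d}(\tilde{\Sigma}_b)$, with $d$ chosen so that $\deg \pi_\ast L = 0$, the pushforward $\pi_\ast L$ is balanced, hence isomorphic to the trivial bundle $\OO^{\oplus 3}$ on $\BP^1$, with the flag $\mathrm{span}\{x_i\}$ recovered from the eigenline over $p_i$. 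Thus $\mathbf h^{-1}(b)$ is a nonempty open subset of $\Jac^d(\tilde{\Sigma}_b)$ of dimension $g(\tilde{\Sigma}_b) = \dim B$, giving dominance and the Lagrangian conclusion; genuine surjectivity onto all of $B$ then follows either from properness of the Hitchin map or by extending the construction to special $b$ via torsion-free sheaves on the more degenerate $\Sigma_b$.

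The hard part will be the $r=3$ singular analysis, and specifically the verification that the generic eigenline bundle has \emph{trivial} direct image rather than merely balanced of the correct degree: this is the step where the positive-codimension nature of the hyperpolygon locus inside the full parabolic Higgs moduli space could, a priori, fail to be generic, and where Proposition \ref{prop-normalization} does the essential work. For $r>3$ the local equation $\lambda^r + \sum_{i\ge2} z^{\floor{(i+1)/2}} a_i(z)\lambda^{r-i}$ has a nonreduced tangent cone (for instance $z^2$ when $i=r$ is even), so the singularity over each $p_i$ is worse than a node, the genus bookkeeping no longer reduces to counting nodes, and precisely this approach breaks down — which is the phenomenon already flagged for the higher-rank case.
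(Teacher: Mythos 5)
Your overall route---reduce the Lagrangian claim to dominance via Proposition \ref{prop-coisotropic} and the dimension count, then produce points of the fibre through the spectral correspondence on the normalization $\tilde{\Sigma}_b$---is the same as the paper's, and your genus bookkeeping is a correct and worthwhile addition that the paper does not make explicit: $p_a(\Sigma_b) = 1 - r + \tfrac{r(r-1)}{2}(n-2) = 3n-8$ for $r=3$, one node over each of the $n$ marked points, so $g(\tilde{\Sigma}_b) = 2n-8 = \dim B$, consistent with the fibres being (open subsets of) Jacobians of the right dimension.

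However, there is a genuine gap at the decisive step, namely the verification that a generic line bundle $L$ on $\tilde{\Sigma}_b$ actually produces a point of $\CX^r_n(\alpha)$ --- that is, that the induced Higgs field has residues in $\Omincl$ at every $p_i$. You gesture at Proposition \ref{prop-normalization} ``controlling the direct image at the marked points'' but never close this; and this is precisely where the hypothesis $r \leq 3$ enters. The paper's argument here is two lines: Proposition \ref{prop-normalization} gives $\lambda^2 = 0$ on $\nu^{-1}(D)$, hence $\phi(p_i)^2 = 0$ for \emph{every} Higgs field in the fibre, and a traceless $r \times r$ matrix with $\phi^2 = 0$ satisfies $\im \phi \subseteq \ker \phi$, so $\rk \phi \leq \floor{r/2}$, which is $\leq 1$ exactly when $r \leq 3$; thus the full Hitchin fibre of the ambient moduli problem already lies in the hyperpolygon locus. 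Your substitute diagnosis of the failure for $r > 3$ --- that the singularity over $p_i$ is worse than a node so the ``genus bookkeeping breaks down'' --- misidentifies the obstruction: the paper's rank-$4$ example shows that the normalization and its Jacobian still make perfect sense, but the resulting residue has rank $2$ and therefore lands outside $\Omincl$, so the fibre over generic $b$ meets $\CX^4_n(\alpha)$ in a proper (possibly empty) subvariety. Without the square-zero-implies-rank-one observation, your proof establishes that the fibre sits inside a $(\dim B)$-dimensional Jacobian but not that it is a nonempty open subset of it. (The triviality of the generic direct image, which you flag as the hard part, is by contrast handled identically in the paper: it is absorbed into the statement that the line bundles with nontrivial pushforward form a divisor in $\Jac(\tilde{\Sigma}_b)$.)
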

\begin{proof}

By the spectral correspondence, there is a 1-1 correspondence between Higgs fields
(independent of stability and bundle type) with characteristic polynomial $\mathbf h(\phi) \in B$ and line bundles in the complement of a divisor in the Jacobian\footnote{The Jacobian contains a divisor whose line bundles have nontrivial direct image on $\BP^1$.} of the normalized spectral curve, $\tilde{\Sigma}$.  By Proposition \ref{prop-normalization},
we have that $\lambda^2 = 0$ for any $(z, \lambda) \in p^{-1}(D)$. Hence for any line bundle $L$
on $\tilde{\Sigma}$, the induced Higgs field $\phi$ satisfies $\phi^2 = 0$ along $D$, i.e.
the nilpotency condition is satisfied on the entire Hitchin fibre. For ranks $r=2,3$, the nilpotency condition $\phi(p_i)^2=0$ implies $\rk(\phi(p_i))\leq 1$, and hence $\phi(p_i)$ lies in the closure of the minimal nilpotent orbit. Hence, $\mathbf h: \CX^r_n(\alpha) \to B$ is surjective, and the components of $\mathbf h$ are functionally independent.
\end{proof}

\begin{example} Consider a degree 3 the spectral curve $\Sigma$, cut out of $\mbox{Tot}(K(D))$ near a marked point $p_j$ by
\begin{equation}
  f(z, \lambda) = \lambda^3 - z\lambda - z^2 = 0.\nonumber
\end{equation}
This specral curve is pictured in Figure \ref{fig-rank-3-spectral-curve}. Multiplying by $\lambda^{-2}$, we get $\lambda - (z/ \lambda) - (z/ \lambda)^2 = 0$, and 
hence $q = z/\lambda$ is a regular function on the corresponding open set in the normalization. It is easy to check
that setting $z = q\lambda$ defines the normalization $\tilde{\Sigma}$ of $\Sigma$. 
Since $\tilde{\Sigma}$ is smooth, a torsion free sheaf is locally free, and hence since it is affine 
any line bundle on $\tilde{\Sigma}$ corresponds to $\BC[\tilde{\Sigma}]$ as a
$\BC[\tilde{\Sigma}]$-module. As a $\BC[\Sigma]$-module, $\BC[\tilde{\Sigma}]$ is generated
by the elements $\{1, q, q^2\}$, with $\lambda = q+q^2$ and $z = \lambda q = q^2 + q^3$.
The Higgs field $\phi$ is given by the action of $\lambda$ on this module, and so the spectral data is equivalent to
\begin{align}
  \phi(1) &= \lambda \cdot 1 = q+q^2 \nonumber\\
  \phi(q) &= \lambda \cdot q = q^2 + q^3 = z\nonumber \\
  \phi(q^2) &= \lambda \cdot q^2 = q^3 + q^4 = (q^2+q^3)q = zq.\nonumber
\end{align}
Hence, a local representative for the Higgs field near $p_j$ is
\begin{equation}
  \phi(z) = \begin{bmatrix} 0 & z & 0 \\ 1 & 0 & z \\ 1 & 0 & 0 \end{bmatrix},\nonumber
\end{equation}
which is a rank 1 matrix, nilpotent of order $2$ at $p_j$ (i.e. at $z=0$).
\begin{figure}[h]
\includegraphics[width=6cm]{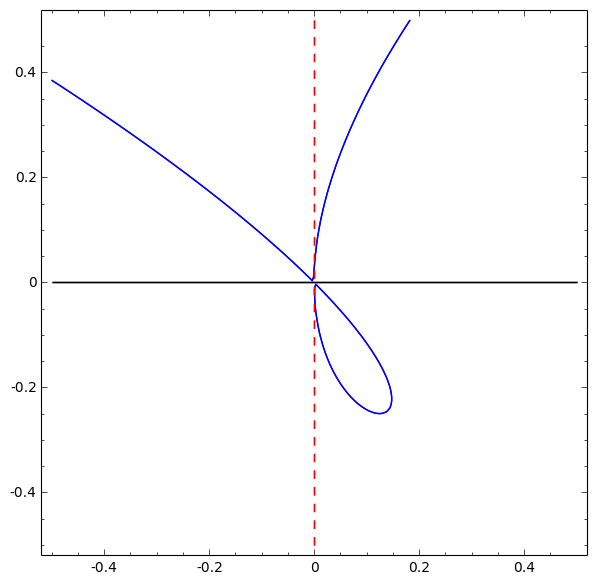}
\caption{Typical singularity in a rank 3 spectral curve.}
\label{fig-rank-3-spectral-curve}
\end{figure}
\end{example}

\begin{remark} The Hitchin map $\mathbf h: \CX^4_n(\alpha) \to B$ is not surjective.
Consider the degree $4$ the spectral curve $\Sigma$ given locally by
\begin{equation}
  f(z, \lambda) = \lambda^4 - z a(z) \lambda^2  - z^2 b(z) \lambda - z^2 c(z) = 0.\nonumber
\end{equation}
This spectral curve is pictured in Figure \ref{fig-rank-4-spectral-curve}. Setting $q = \lambda^2 / z$, we find that
\begin{equation}
  q^2 - a(z) q - b(z) \lambda - c(z) = 0,\nonumber
\end{equation}
and for generic $a,b,c$ this defines the normalization $\tilde{\Sigma}$ of $\Sigma$. 
Again, any line bundle on $\tilde{\Sigma}$ corresponds to $\BC[\tilde{\Sigma}]$ as a
$\BC[\tilde{\Sigma}]$-module. As a $\BC[\Sigma]$-module, $\BC[\tilde{\Sigma}]$ is generated
by the elements $\{1, \lambda, q, \lambda q\}$, with $\lambda^2 = zq$.
The Higgs field $\phi$ is given by the action of $\lambda$ on this module, so we have
\begin{align}
  \phi(1) &= \lambda \cdot 1 = \lambda\nonumber \\
  \phi(\lambda) &= \lambda \cdot \lambda = \lambda^2 = zq \nonumber\\
  \phi(q) &= \lambda \cdot q = \lambda q \nonumber\\
  \phi(\lambda q) &= \lambda^2 q = z q^2 = z(a q + b\lambda + c .)\nonumber
\end{align}
Hence,
\begin{equation}
  \phi(z) = \begin{bmatrix} 0 & 0 & 0 & zc \\ 1 & 0 & 0 & zb \\ 0 & z & 0 & za \\ 0 & 0 & 1 & 0 \end{bmatrix},\nonumber
\end{equation}
which satisfies $\phi^2 = 0$ at $z=0$. However, we see that $\phi$ has rank 2 at $z=0$, and hence is not contained in the closure of the minimal nilpotent orbit.
\begin{figure}[h]
\includegraphics[width=6cm]{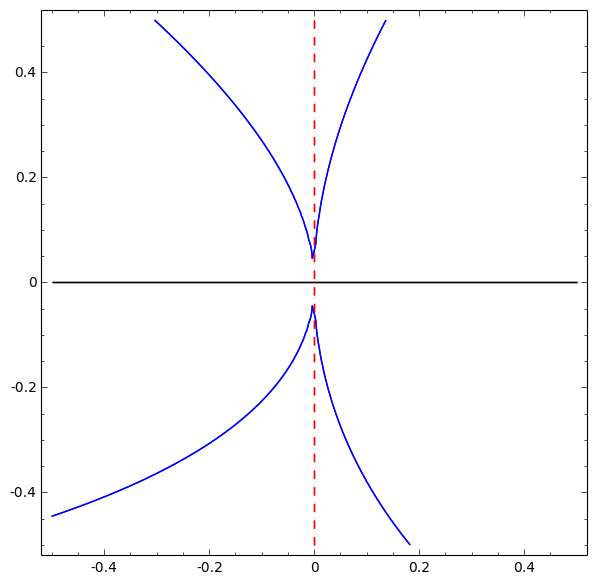}
\caption{Typical singularity in a rank 4 spectral curve.}
\label{fig-rank-4-spectral-curve}
\end{figure}
\end{remark}

\bibliographystyle{plain}
\bibliography{polygon}

\begin{thebibliography}{10}

\bibitem{AHH}
M.~Adams, J.~Harnad, and J.~Hurtubise.
\newblock Isospectral {H}amiltonian flows in finite and infinite dimensions.
  {II}. {I}ntegration of flows.
\newblock {\em Comm. Math. Phys.}, 134(3):555--585, 1990.

\bibitem{YangMillsRiemannSurface}
M.~F. Atiyah and R.~Bott.
\newblock The {Y}ang-{M}ills equations over a {R}iemann surface.
\newblock {\em Phil. Trans. R. Soc. Lond. A}, 308:523--615, 1982.

\bibitem{BNB89}
A.~Beauville, M.~Narasimhan, and S.~Ramanan.
\newblock Spectral curves and the generalized theta divisor.
\newblock {\em J. Reine Angew. Math.}, 398:169--179, 1989.

\bibitem{BFGM}
I.~Biswas, C.~Florentino, L.~Godinho, and A.~Mandini.
\newblock Symplectic form on hyperpolygon spaces.
\newblock 2013.
\newblock ar{X}iv: 1306.4806.

\bibitem{BodenYokogawa}
H.~Boden and K.~Yokogawa.
\newblock Moduli spaces of parabolic {H}iggs bundles and parabolic {$K(D)$}
  pairs over smooth curves. {I}.
\newblock {\em Internat. J. Math.}, 7(5):573--598, 1996.

\bibitem{Donagi}
R.~Donagi.
\newblock Spectral covers.
\newblock In {\em Current topics in complex algebraic geometry ({B}erkeley,
  {CA}, 1992/93)}, volume~28 of {\em Math. Sci. Res. Inst. Publ.}, pages
  65--86. Cambridge Univ. Press, Cambridge, 1995.

\bibitem{DonagiMarkman}
R.~Donagi and E.~Markman.
\newblock Spectral covers, algebraically completely integrable, {H}amiltonian
  systems, and moduli of bundles.
\newblock In {\em Integrable systems and quantum groups ({M}ontecatini {T}erme,
  1993)}, volume 1620 of {\em Lecture Notes in Math.}, pages 1--119. Springer,
  Berlin, 1996.

\bibitem{GodinhoMandini}
L.~Godinho and A.~Mandini.
\newblock Hyperpolygon spaces and moduli spaces of parabolic {H}iggs bundles.
\newblock {\em Adv. Math.}, 244:465--532, 2013.

\bibitem{HaradaWilkin}
M.~Harada and G.~Wilkin.
\newblock Morse theory on representations of quivers.
\newblock {\em Geom. Dedicata}, 150:307--353, 2011.

\bibitem{HauselBetti}
T.~Hausel.
\newblock Betti numbers of holomorphic symplectic quotients via arithmetic
  {F}ourier transform.
\newblock {\em Proc. Natl. Acad. Sci. USA}, 103(16):6120--6124, 2006.

\bibitem{HauselProudfoot}
T.~Hausel and N.~Proudfoot.
\newblock Abelianization for hyperk\"ahler quotients.
\newblock {\em Topology}, 44(1):231--248, 2005.

\bibitem{HRV2013}
T.~Hausel and R.~Rodriguez-Villegas.
\newblock Cohomology of large semiprojective hyperkähler varieties.
\newblock 2013.
\newblock ar{X}iv: 1309.4914.

\bibitem{HitchinStableBundles}
N.~Hitchin.
\newblock Stable bundles and integrable systems.
\newblock {\em Duke Math. J.}, 54(1):91--114, 1987.

\bibitem{JeffreyKirwan95}
L.~Jeffrey and F.~Kirwan.
\newblock Localization for nonabelian group actions.
\newblock {\em Topology}, 34:291--327, 1995.

\bibitem{KhSo2013}
B.~Khesin and F.~Soloviev.
\newblock Integrability of higher pentagram maps.
\newblock {\em Math. Ann.}, 357(3):1005--1047, 2013.

\bibitem{King}
A.~King.
\newblock Moduli of representations of finite-dimensional algebras.
\newblock {\em Quart. J. Math. Oxford. Ser. (2)}, 45(180):515--530, 1994.

\bibitem{Kirwan}
F.~Kirwan.
\newblock {\em Cohomology of quotients in symplectic and algebraic geometry}.
\newblock Princeton University Press, 1984.

\bibitem{Klyachko}
A.~Klyachko.
\newblock Spatial polygons and stable configurations of points in the
  projective line.
\newblock {\em Aspects Math.}, E25:67--84, 1994.

\bibitem{KonnoPolygon}
H.~Konno.
\newblock On the cohomology ring of the hyperk\"ahler analogue of the polygon
  spaces.
\newblock In {\em Integrable systems, topology, and physics}, volume 309 of
  {\em Contemp. Math.} Amer. Math. Soc., 2000.

\bibitem{Lerman}
E.~Lerman.
\newblock Symplectic cuts.
\newblock {\em Math. Res. Lett.}, 2:247--258, 1995.

\bibitem{LogaresMartens}
M.~Logares and J.~Martens.
\newblock Moduli of parabolic {H}iggs bundles and {A}tiyah algebroids.
\newblock {\em J. Reine Angew. Math.}, 649:89--116, 2010.

\bibitem{Nakajima94}
H.~Nakajima.
\newblock Instantons on {ALE} spaces, quiver varieties, and {K}ac-{M}oody
  algebras.
\newblock {\em Duke Math. J.}, 76(2):365--416, 1994.

\bibitem{Simpson90}
C.~Simpson.
\newblock Harmonic bundles on noncompact curves.
\newblock {\em J. Amer. Math. Soc.}, 3(3):713--770, 1990.

\bibitem{TolmanWeitsman}
S.~Tolman and J.~Weitsman.
\newblock The cohomology rings of symplectic quotients.
\newblock {\em Comm. Anal. Geom.}, 11:751--773, 2003.

\end{thebibliography}

\end{document}